\newtheorem{theorem}{Theorem}
\numberwithin{theorem}{section}% [if desired]
\newtheorem{proposition}[theorem]{Proposition}
\newtheorem{lemma}[theorem]{Lemma}
\newtheorem{corollary}[theorem]{Corollary}
\theoremstyle{definition}
\theoremstyle{definition}\newtheorem{definition}[theorem]{Definition}
\theoremstyle{definition}\newtheorem{remark}[theorem]{Remark}
\theoremstyle{definition}\newtheorem{example}[theorem]{Example}
\theoremstyle{definition}
\theoremstyle{definition}
\theoremstyle{definition}
\theoremstyle{definition}\newtheorem{problem}[theorem]{Problem}
\theoremstyle{definition}
\theoremstyle{definition}
 \newcommand{\se}{\subseteq}
\newcommand{\T}{\mathbb{T}}
\newcommand{\R}{\mathbb{R}}
\newcommand{\C}{\mathbb{C}}
\newcommand{\A}{\mathcal{A}}
\newcommand{\CC}{\mathcal{C}}
\newcommand{\B}{\mathcal{B}}
\newcommand{\M}{\mathcal{M}}
\newcommand{\K}{\mathcal{K}}
\newcommand{\G}{\mathcal{G}}
\newcommand{\E}{\mathcal{E}}
\newcommand{\F}{\mathcal{F}}
\renewcommand{\H}{\mathcal{H}}
\newcommand{\SB}{\mathscr{B}}
\newcommand{\BH}{\mathscr{B}(\mathcal{H})}
\newcommand{\BK}{\mathscr{B}(\mathcal{K})}
\newcommand{\BG}{\mathscr{B}(\mathcal{G})}
\newcommand{\Ba}{\SB^a}
\newcommand{\BaE}{\mathscr{B}^a(E)}
\newcommand{\la}{\langle}
\newcommand{\ra}{\rangle}
\newcommand{\1}{\mathbf{1}}
\newcommand{\IG}{\I\!\!\Gamma}%{{\bf I}\!\Gamma}
\newcommand{\ilim}{\lim\operatorname{ind}}
\newcommand{\I}{\mathbb{I}}
\newcommand{\J}{\mathbb{J}}
\newcommand{\ft}{{\mathfrak t}}
\newcommand{\fs}{{\mathfrak s}}
\newcommand{\fr}{\mathfrak r}
\DeclareMathOperator{\spn}{span}
\DeclareMathOperator{\id}{id}
\title{Structure of Block Quantum Dynamical Semigroups and their Product Systems}
\author{B.V. Rajarama Bhat}
\address{Indian Statistical Institute, Stat-Math. Unit, R V College Post, Bengaluru 560059, India}
\email{bhat@isibang.ac.in}
\author{Vijaya Kumar U.}
\address{Indian Statistical Institute, Stat-Math Unit, R V College Post, Bengaluru 560059, India}
\email{uvijayakumar87@gmail.com }
\keywords{completely positive maps, product systems, Hilbert $C^*$-modules, quantum
dynamical semigroups, dilation theory} \subjclass{primary: 46L57;
secondary: 46L08, 81S22}
\begin{document}

    \begin{abstract}
        W. Paschke's version of Stinespring's theorem associates a Hilbert $C^*$-module along with a generating vector
        to every completely positive map. Building on this,  to every quantum dynamical semigroup (QDS) on a
        $C^*$-algebra $\A$ one may associate
         an inclusion system $E=(E_t)$ of Hilbert $\A$-$\A$-modules with a generating
          unit $\xi =(\xi _t)$. Suppose $\B$ is a von Neumann algebra, consider $M_2(\B)$, the von Neumann algebra of $2\times 2$ matrices with
          entries from $\B$. Suppose $(\Phi_t)_{t\ge 0}$ with
           $\Phi_t=\begin{pmatrix}
        \phi_t^1& \psi_t\\\psi_t^*&\phi_t^2
        \end{pmatrix},$ is a QDS on $M_2(\B)$ which acts block-wise and let   $(E^i_t)_{t\ge 0}$ be the inclusion
          system associated to the diagonal QDS $(\phi^i_t)_{t\ge 0}$ with the generating unit $(\xi_t^i)_{t\ge 0}, i=1,2.$ It is shown that
           there is a contractive (bilinear)        morphism $T=(T_t)_{t\ge0}$
         from $(E^2_t)_{t\ge 0}$ to $(E^1_t)_{t\ge 0}$ such
         that $\psi_t(a)=\la \xi^1_t, T_t a\xi^2_t\ra$ for all $a\in\B.$ We
          also prove that any contractive morphism between inclusion systems of von Neumann $\B$-$\B$-modules can be lifted as a
           morphism between the product systems generated by them. We observe that the $E_0$-dilation of a block quantum
           Markov semigroup (QMS) on a unital $C^*$-algebra is again a semigroup of
           block maps.
    \end{abstract}

    \maketitle

    \section{Introduction}
    It is well-known that a block matrix $\begin{pmatrix}
    A&B\\B^*&D
    \end{pmatrix}$ of operators on a direct sum of Hilbert spaces $\H \oplus \K$ is positive
    if and only if $A, D$ are positive and
    there exists a contraction $K: \K \to \H$ such that $B=A^{\frac{1}{2}}KD^\frac{1}{2}.$
    This says that the positivity of a block matrix is determined up to a contraction by the positive diagonals.
     We want to look at the structure of  block completely positive (CP) maps, that is, completely positive maps which send $2\times 2$ block
    operators as above to $2\times 2$ block operators.  Such maps   have already appeared  in many different
    contexts.
     For example, Paulsen uses the block CP maps in \cite{Paul} to prove
    that every completely polynomially bounded operator is similar to a contraction. The structure of completely bounded (CB)
    maps are understood using the $2\times 2$ block CP maps (See \cite{Paul,PS,Suen},\cite[Chapter 8]{Paulsen-book}).  The
    usual way to study the structure of CP maps into $\BH$ is via Stinespring dilation theorem (\cite{Stine}),
    which says that if $\phi:\A\to\BH$ is a CP map then there is a triple $(\K,\pi,V)$ of a Hilbert space $\K$, a
    representation $\pi:\A\to \BK$ and a bounded operator $V\in\SB(\H,\K)$ such that $\phi(a)=V^*\pi(a)V$ for all $a\in \A.$
    If  $\Phi=\begin{pmatrix}
    \phi_1&\psi\\\psi^*&\phi_2
    \end{pmatrix}:M_2(\A)\to M_2(\BH)$ is a block CP map, then the diagonals $\phi_i,i=1,2$ are also CP  maps on $\A.$ Also the Stinespring representation of $\Phi$ gives us natural Stinespring representations for $\phi_i$ by the appropriate compressions. In \cite[Corollary 2.7]{PS} Paulsen and Suen proved that: if   $\Phi=\begin{pmatrix}
    \phi&\psi\\\psi^*&\phi \end{pmatrix}:M_2(\A)\to M_2(\BH)$  is CP and if $\phi$ has the
     minimal Stinespring representation $(\K, \pi,V)$ then there exists a contraction $T\in \pi(\A)' $ such
     that $\psi(\cdot)=V^*\pi(\cdot)TV.$ In \cite{KF} Furuta studied the completion problems of partial matrices of block completely positive maps (see Remark \ref{rem-Furuta}). These results show the
     importance of studying block CP maps. In this article, we want
     to study one parameter semigroups of block CP maps.

    To begin with, the Stinespring's theorem approach is not so convenient
    to study compositions of CP maps between general $C^*$-algebras. In \cite{Paschke} Paschke proved a structure
    theorem for CP maps between $C^*$-algebras, which is a
    generalization of Stinespring theorem,  we call it as GNS-construction for CP
    maps, in view of its close connection with the familiar GNS-construction for
    positive linear functionals on $C^*$-algebras. This theorem
      says that given a CP map
    $\phi:\A\to\B$ there is a pair $(E,\xi)$ of a Hilbert $\A$-$\B$-module $E$ (see below for definitions)
     and a cyclic vector $\xi$ in $E$
    such that $\phi(a)=\la \xi,a\xi\ra$ for all $a\in \A.$ The advantage of GNS-construction is that
    we can write the GNS-construction of the composition of two CP maps as a submodule of the tensor product
    of their GNS-constructions (see Remark \ref{comp}).  Employing this scheme we can associate an inclusion
    system  (synonymous with subproduct system)  $(E_t)$ of Hilbert $\B$-$\B$-modules with a generating unit $(\xi_t)$ to
    any one parameter semigroup of  completely positive maps on $\B.$ It may be recalled
    that semigroups of completely positive maps on a
    $C^*$-algebra are known as quantum dynamical semigroups (QDS), semigroups of unital completely positive maps are known as quantum Markov semigroups (QMS) and
    semigroups of unital endomorphisms are known as
    $E_0$-semigroups. In  \cite{Bh-ind} Bhat proved that any QMS on $\BH$
    admits a unique $E_0$-dilation, and  in \cite{Bh-dil} extended the result
    to QMS on unital  $C^*$-subalgebras of $\BH $.
    Later in \cite{BS} Bhat and Skeide constructed the $E_0$-dilation for arbitrary quantum Markov semigroups (QMS)
    on abstract unital $C^*$-algebras,  using the
    technology of Hilbert $C^*$-modules.  Here one sees for the first time  subproduct systems and product systems
    of Hilbert $C^*$-modules.
     Muhly and Solel (in \cite{MS})
    took a dual approach to achieve this, where they have called
    these Hilbert $C^*$-modules as $C^*$-correspondences.

      While studying units of  $E_0$-semigroups of $\BH$ Powers was led into considering block CP semigroups
      (See  \cite{Po} and \cite{BLS}, \cite{Sk}). In \cite{BM} Bhat and Mukherjee proved a structure theorem for block    QMS on $\SB(\H\oplus \K)$. The main point is that when we have a block QMS,
     there is a contractive morphism between  inclusion systems of
     diagonal CP semigroups. Moreover, this morphism lifts to
     associated product systems. The main goal of this paper is to explore
    the structure of  block quantum dynamical semigroups  on general von Neumann algebras. The
    extension of the theory from $\BH$ case is not straightforward for the following reason. In the case
    of $\BH $, we need only to consider product systems of Hilbert spaces,  whereas now we need to deal with
    both product systems of Hilbert $\B$-modules and also product systems of
    Hilbert $M_2(\B)$-modules (see  Theorem \ref{main-single}) and their inter-dependences. But a careful analysis of
    these modules does lead us to a morphism between inclusion
    systems as in the $\BH$ case and this morphism can also be
    lifted to a morphism at the level of associated product systems
    (Theorem \ref{thm-lifting}). At various steps we consider adjoints of maps
    between our modules and so it is convenient to have von Neumann
    modules. The picture is unclear for Hilbert $C^*$-modules.

    In Section \ref{sec-prelm} we recall the preliminaries. In Section \ref{sec-blk-CP} we prove a
    structure theorem for block CP maps from $M_2(\A)$ to $M_2(\B)$ when $\B$ is a von Neumann algebra also
    we give an example to indicate that we can not replace $\B$ by an arbitrary $C^*$-algebra. We extend this result to semigroups of block CP maps on $M_2(\B)$ in Section \ref{subsec-1}. In Section \ref{subsec-2} we show that the $E_0$-dilation of a block QMS is again block semigroup. In the final section we prove that any morphism   between inclusion systems of von Neumann $\B$-$\B$-modules can be lifted as a morphism between the product systems
     generated by them. Subproduct systems and inclusion systems are synonyms.  The word `subproduct systems' seems to be better established now.   Since we are mostly following the ideas and notations of \cite{BM}, we will continue to call these objects as inclusion systems.

      Given a linear map  $\psi:\A \to \B$ between $C^*$-algebras,  $\psi^*$ denotes  the linear map  $\psi^*:\A\to \B$  defined by $\psi^*(a)=\psi(a^*)^*$ for all $a\in\A.$ For $h\in\H,k\in\K,$ $\ketbra{k}{h}$ denotes the bounded linear operator $\ketbra{k}{h}:\H\to \K$ defined by $\ketbra{k}{h}(h')=\la h,h'\ra k$ for all $h'\in\H.$  All our Hilbert spaces  are taken as complex and separable, with scalar products linear in the second variable.

    \section{Preliminaries}\label{sec-prelm}

    For the basic theory about Hilbert $C^*$-modules we refer to \cite{Lance,BS,BBLS,Paschke,MS-book}. Here we recall only
    the most required definitions and results about Hilbert $C^*$-modules. A {\em pre-Hilbert $\B$-module} $E$ is a vector space, which is a right $\B$-module (compatible with scalar multiplication) together with a $\B$-valued inner product. It is said to be a {\em Hilbert $\B$-module} if it is complete in the norm given by $\norm{x}=\sqrt{\norm{\la x,x\ra}}$ for $ x\in E.$ If we have just semi-inner product
    on a $\B$-module $E,$ using the Cauchy-Schwarz inequality $\la x,y\ra\la y,x\ra\le \norm{\la y, y\ra} \la x,x\ra,$
    we can see that $N=\{x\in E: \la x,x\ra =0\}$ is a $\B$-submodule and hence  $E/N$ is a pre-Hilbert  $\B$-module equipped with the
    natural inner product.  A {\em Hilbert  $\A$-$\B$-module} $E$ is a Hilbert $\B$-module with a nondegenerate
    action of $\A$ on $E.$ % We denote by $\Ba(E,F),$ the set of all adjointable $\B$-linear maps from $E$ to $F$ and    we write  $\BaE$ for $\Ba(E,E).$ When $E$ and $F$ are Hilbert $\A$-$\B$-modules, we denote the set of all     bilinear (two-sided) maps from $E$ to $F$  by $\SB^{bil}(E,F).$
    Two Hilbert $\A$-$\B$-modules $E$ and $F$ are said to be {\em isomorphic} if there is a bilinear unitary between them and in this case, we write $E\simeq F.$

    \begin{definition}
        Let $E$ be a Hilbert $\A$-$\B$-module and  $F$ be a Hilbert $\B$-$\CC$-module. Then
         $\la x\otimes y,x'\otimes y'\ra=\la y,\la x,x'\ra y'\ra$ defines a semi-inner product on
         (the algebraic tensor product) $E\otimes F$ with the natural right $\CC$-action.
          Let $N=\{w\in E\otimes F: \la w,w\ra=0\}.$  The \emph{interior tensor product} of $E$ and $F$ is
           defined as the completion of ${E\otimes F}/N$ and it is denoted by $E\odot F.$
\end{definition}

 Observe that $E\odot F$ in previous definition
 is a Hilbert $\A$-$\CC$-module with the natural left action of $\A.$ We denote the equivalence class
 of $x\otimes y$ in $E\odot F$ by $x\odot y.$ It may be noted that
 for $b\in \B$, $xb\odot y = x\odot by .$ Let $E,E'$ be Hilbert $\A$-$\B$-modules and $F,F'$ be Hilbert $\B$-$\CC$-modules.
  If $T:E\to E'$ and $S: F\to F'$ are bounded bilinear maps then, $T\odot S:E\odot F\to E'\odot F'$ is a bounded
   bilinear map defined by $(T\odot S)(x\odot y)=Tx\odot Sy$ for $x\in E, y\in F.$

Let $\A$ and $\B$ be $C^*$-algebras and let $\G$ be a Hilbert space on which $\B$ is represented
nondegenerately ($\G$ can be viewed as a Hilbert $\B$-$\C$-module). Let $E$ be a Hilbert $\A$-$\B$-module.
Consider the tensor product   $\H=E\odot \G,$ which is a  Hilbert $\A$-$\C$-module. That is, $\H$ is a Hilbert
space with a representation $\rho:\A\to\BH.$  The map $\rho$ is called the \emph{Stinespring representation} of $\A$ associated
 with $E$ and $\G$ (see Remark \ref{Stinepring rep} below). For $x\in E$ let $L_x:\G\to \H$ be defined by $L_x(g)=x\odot g,$
then $L_x\in \SB(\G,\H)$  with $L_x^*: x'\odot g\mapsto \la x, x'
\ra g.$   Define $\eta: E\to \SB(\G,\H)$ by $\eta(x)=L_x.$ Then  we
have $L_x^*L_y=\la x,y\ra\in \B\subseteq \SB(\G),$  hence,
if the representation of $\B$ on $\G$ is faithful then so is $\eta.$
Also we have $L_{axb}=\rho(a)L_xb$ so that we may identify $E$ as a
concrete subset of $\SB(\G,\H).$ The map $\eta$ is called the \textit{Stinespring representation of $E$}
(associated with $\G$).

In particular, if $\B$ is a von Neumann algebra on a Hilbert space $\G,$ we always consider $E$ as a concrete subset of $\SB(\G, E\odot \G).$

\begin{definition}
 Let $\B$ be a von Neumann algebra on a Hilbert space $\G.$ A Hilbert $\B$-module $E$ is
 said to be a \textit{von Neumann $\B$-module} if $E$ is strongly closed in $\SB(\G, E\odot \G).$ Further, if  $\A$ is a von Neumann algebra, a von Neumann $\B$-module
$E$ is said to be a \textit{von Neumann $\A$-$\B$-module} if it is a
Hilbert $\A$-$\B$-module such that the Stinespring representation
$\rho:\A\to \SB(E\odot \G)$ is normal.
\end{definition}

\begin{remark}
    Let $\A$ be a $C^*$-algebra and $\B$ be a von Neumann algebra on a     Hilbert space $\G$. Let $E$ be a  Hilbert $\A$-$\B$-module. Then $E$ can be completed in strong operator topology to get,      $\overline{E}^s$  which is a Hilbert $\A$-$\B$-module and  is a  von Neumann $\B$-module. Here the left action by $\A$ need not
    be normal.
\end{remark}

\begin{remark}
     If $E$ is a von Neumann $\B$-module, then $\BaE$ is a von Neumann subalgebra
      of $\SB(E\odot \G).$ von Neumann modules are self-dual and hence any bounded right linear map
      between von Neumann modules is adjointable. If $F$ is a von Neumann submodule of $E,$ then
       there exists a projection $p$ $(p=p^2=p^*)$ in $\BaE$ onto
       $F$, that is, $p(E)=F$ and $E$ decomposes as $E= F\oplus F^{\perp
       }.$
\end{remark}

 Let $\A$ and $\B$ be unital $C^*$-algebras and let $\phi:\A\to \B$ be a CP map. Consider the algebraic tensor product $\A\otimes \B.$ For $a,a'\in \A, b,b'\in \B,$ define
 $\la a\otimes b,a'\otimes b'\ra=b^*\phi(a^*a')b.$
 Then $\la\cdot, \cdot\ra$ is a semi-inner product on $\A\otimes\B.$ Let
 $N_{\A\otimes \B}=\{w\in \A\otimes \B: \la w,w\ra=0\}.$ Let $E$ be the completion
 of ${\A\otimes\B}/N_{\A\otimes\B}.$ Then $E$ is a Hilbert $\A$-$\B$-module in
 a natural way. Let $\xi=\1\otimes\1+N_{\A\otimes \B},$ then we have $\phi(a)=\la \xi,a\xi\ra.$  Moreover, $\xi$ is cyclic  (i.e., $E=\overline{\spn}(\A\xi\B)$). The pair $(E,\xi)$ is called the \textit{GNS-construction} of $\phi$ and $E$ is called the \textit{GNS-module} for $\phi.$ Obviously, $\phi$ is unital if and only if $\la \xi,\xi\ra=\1.$

\begin{definition}
    Let $\phi:\A\to \B$  be a CP map. Let $E$ be a Hilbert $\A$-$\B$-module and $\xi\in E,$ We
    call $(E,\xi)$ as a \emph{GNS-representation} for $\phi$ if $\phi(a)=\la \xi, a\xi\ra$ for all $a\in \A.$ It is said to be \emph{minimal} if
    $E=\overline{\spn}(\A\xi\B).$
\end{definition}

Note that the GNS-module in the GNS-construction is minimal. If
$(E,\xi)$ and $(F,\zeta)$ are two minimal GNS-representations for
$\phi$ then the map $\xi\mapsto\zeta$ extends as a bilinear unitary
from $E$ to $F.$ Hence the GNS-representation is unique up to
(unitary) isomorphism.
\begin{remark}\label{Stinepring rep}
    Let $\phi:\A\to \BG$ be a CP map.

    Suppose $(E,\xi)$ is the GNS-construction for $\phi.$ Let $\eta:E\to \SB(\G,\H)$ be the Stinespring representation of $E$ as defined above, then $$\phi(a)=\la\xi,a\xi\ra=L_\xi^*L_{a\xi}=L_\xi^*\rho(a)L_\xi.$$   Note that  $L_\xi$ is an isometry in $\SB(\G,\H)$ if and only if $\phi$ is unital. Note also that $\overline{\spn}\{\rho(a)L_\xi g:a\in\A,g\in\G\}=\overline{\spn}\{a\xi\odot g:a\in\A,g\in\G\}=E\odot \G=\H.$ So we obtain the usual minimal Stinespring representation $(\H,\rho,L_\xi)$ of $\phi.$

      Conversely, if $(\H,\pi, V)$ is the minimal  Stinespring  representation for $\phi.$ Consider $\SB(\G,\H)$ as a Hilbert $\A$-$\BG$-module, where the left action of $\A$ is given by the representation $\pi.$ Let  $E=\overline{\spn}~\A V\BG \se \SB(\G,\H).$ Then $(E,V)$ is a minimal GNS-representation for $\phi.$ %Note that $\overline{E}^s$ is a Hilbert  $\A$-$\BG$-module and von Neumann $\BG$-module.
\end{remark}

\begin{proposition}\label{prop-vNm-1}
    If $E$ is the GNS-module of a normal completely positive map $ \phi:\A\to \B$ between von Neumann algebras,
    then $\overline{E}^s$ is a von Neumann $\A$-$\B$-module.
\end{proposition}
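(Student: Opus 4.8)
The plan is to note that, by the remark preceding Proposition~\ref{prop-vNm-1}, the strong completion $\overline{E}^s$ is \emph{already} known to be a von Neumann $\B$-module which is a Hilbert $\A$-$\B$-module; the only thing left to verify is that its Stinespring representation of $\A$ is normal. First I would fix a faithful normal representation of $\B$ on a Hilbert space $\G$, so that $\phi$ is a normal CP map $\A\to\B\se\BG$, and write $(E,\xi)$ for the GNS-construction, $\H=E\odot\G$, and $\rho:\A\to\BH$ for the associated Stinespring representation. By Remark~\ref{Stinepring rep}, $(\H,\rho,L_\xi)$ is the \emph{minimal} Stinespring representation of $\phi$, so $\overline{\spn}\{\rho(a)L_\xi g:a\in\A,\,g\in\G\}=\H$ and $\phi(a)=L_\xi^*\rho(a)L_\xi$. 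Since passing to the strong closure inside $\SB(\G,\H)$ does not enlarge the interior tensor product with $\G$ (for $X\in\overline{E}^s$ one has $X\odot g=Xg$, and $E\odot\G=\H$ is already complete), we get $\overline{E}^s\odot\G=\H$, and the left action of $a\in\A$ on $\overline{E}^s$, namely $X\mapsto\rho(a)X$, is implemented on $\overline{E}^s\odot\G=\H$ by the same operator $\rho(a)$ (cf.\ \cite{BS,BBLS}). Thus the statement reduces to: $\rho:\A\to\BH$ is normal.

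For this I would use that a positive linear map between von Neumann algebras is normal once it preserves suprema of bounded increasing nets. Let $(a_i)\uparrow a$ be such a net in $\A_+$. As $\rho$ is a $*$-homomorphism it is order preserving, so $(\rho(a_i))$ is increasing and dominated by $\rho(a)$; put $b=\sup_i\rho(a_i)\le\rho(a)$, so that $\rho(a_i)\to b$ strongly and weakly. On the total set of vectors $\rho(c)L_\xi g,\ \rho(d)L_\xi g'$ $(c,d\in\A,\ g,g'\in\G)$ one computes
\[
\la\rho(c)L_\xi g,\ \rho(a_i)\rho(d)L_\xi g'\ra=\la g,\ L_\xi^*\rho(c^*a_id)L_\xi g'\ra=\la g,\ \phi(c^*a_id)g'\ra .
\]
Since $a_i\to a$ $\sigma$-weakly, multiplication on each side by a fixed element is $\sigma$-weakly continuous, and $\phi$ is normal, the right-hand side tends to $\la g,\phi(c^*ad)g'\ra=\la\rho(c)L_\xi g,\,\rho(a)\rho(d)L_\xi g'\ra$, while the left-hand side tends to $\la\rho(c)L_\xi g,\ b\,\rho(d)L_\xi g'\ra$. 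Because these vectors are total and $b,\rho(a)$ are bounded, this forces $b=\rho(a)$. Hence $\rho$ preserves suprema of bounded increasing nets, so $\rho$ is normal, and therefore $\overline{E}^s$ is a von Neumann $\A$-$\B$-module.

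The routine steps are the passage from weak-operator convergence on a total set of vectors to equality of the (uniformly bounded) operators, and the standard fact that a positive map preserving bounded increasing suprema is normal; neither poses difficulty. The point that deserves care — and the main obstacle — is the identification $\overline{E}^s\odot\G=E\odot\G$ together with the verification that the left $\A$-action on $\overline{E}^s$ is implemented by exactly the \emph{same} operators $\rho(a)$ on $\H$, for it is this identification that transports normality of $\phi$ to normality of $\rho$; this is precisely why one needs to work with von Neumann modules here rather than with general Hilbert $C^*$-modules.
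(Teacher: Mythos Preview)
The paper states Proposition~\ref{prop-vNm-1} without proof, treating it as a known background fact (it is standard; cf.\ \cite{BS,MS-book}). Your argument is correct and is essentially the standard one: identify $\overline{E}^s\odot\G$ with $\H=E\odot\G$ via the isometry $X\odot g\mapsto Xg$, observe that the left $\A$-action on $\overline{E}^s$ is still implemented by the minimal Stinespring representation $\rho$ of $\phi$, and then use normality of $\phi$ together with totality of the vectors $\rho(c)L_\xi g$ in $\H$ to conclude that $\rho$ preserves suprema of bounded increasing nets. The step you single out as the main obstacle --- that the Stinespring representation of $\A$ associated with $\overline{E}^s$ coincides with $\rho$ on the \emph{same} Hilbert space $\H$ --- is indeed the key point, and your justification (left multiplication by $\rho(a)$ is strongly continuous, and the range of $X\odot g\mapsto Xg$ already fills the complete space $E\odot\G$) handles it correctly.
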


\begin{proposition}\label{prop-vNm-3}
Let $E$ be a von Neumann $\A$-$\B$-module and let $F$ be a von Neumann $\B$-$\CC$-module where $\CC$ acts on a
 Hilbert space $\G.$ Then the strong closure $\overline{E\odot F}^s$ of the tensor product $E\odot F$ in $\SB(\G, E\odot F\odot \G),$ is a von Neumann $\A$-$\CC$-module.
\end{proposition}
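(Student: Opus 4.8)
\smallskip
\noindent\emph{Proof proposal.}\quad
The plan is to reduce the assertion to the normality of a single Stinespring representation and then establish that normality by peeling off the interior tensor factors one at a time. First I would invoke the Remark on strong closures: since $E\odot F$ is a Hilbert $\A$-$\CC$-module and $\CC$ acts on $\G$, that Remark already gives that $\overline{E\odot F}^s$, formed inside $\SB(\G, E\odot F\odot\G)$, is a Hilbert $\A$-$\CC$-module and a von Neumann $\CC$-module, with the left action of $\A$ the unique strongly continuous extension of the action $a\cdot(x\odot y)=(ax)\odot y$ on $E\odot F$. Thus the only remaining point is to check that the Stinespring representation $\rho$ of $\overline{E\odot F}^s$ associated with $\G$ is normal.

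Here I would use the standard identification of a concrete von Neumann module with its Stinespring space: the map $w\odot g\mapsto wg$ is unitary from $\overline{E\odot F}^s\odot\G$ onto $E\odot F\odot\G$, because its range contains $\overline{\spn}\,(E\odot F)\G=E\odot F\odot\G$ and it is isometric (the inner product of a concrete module being $\la w,w'\ra=w^*w'$). Under this unitary, and since the left action of $a$ on an elementary tensor is $a\cdot(x\odot y)=(ax)\odot y$, the representation $\rho$ becomes the representation $\rho_0$ on $E\odot F\odot\G$ determined by $\rho_0(a)\big((x\odot y)\odot g\big)=\big((ax)\odot y\big)\odot g$. So it suffices to prove that $\rho_0$ is normal. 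To do this, take a bounded net $(a_\lambda)$ in $\A$ with $a_\lambda\to a$ $\sigma$-weakly; as $(\rho_0(a_\lambda))$ is uniformly bounded and the vectors $(x\odot y)\odot g$ are total, it is enough to check that
\[
\la (x_1\odot y_1)\odot g_1,\ \rho_0(a_\lambda)\big((x_2\odot y_2)\odot g_2\big)\ra=\la g_1,\ \la y_1,\ \la x_1,\,a_\lambda x_2\ra\,y_2\ra\, g_2\ra
\]
converges to the same expression with $a$ in place of $a_\lambda$. Writing $\rho_E$ for the (normal, by hypothesis) Stinespring representation of the von Neumann $\A$-$\B$-module $E$ and realizing $E$ concretely, one has $\la x_1,a_\lambda x_2\ra=L_{x_1}^*\rho_E(a_\lambda)L_{x_2}$, whence $\la x_1,a_\lambda x_2\ra\to\la x_1,ax_2\ra$ $\sigma$-weakly in $\B$. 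Then, with $b_\lambda:=\la x_1,a_\lambda x_2\ra$ a bounded net in $\B$, $\rho_F$ the (normal) Stinespring representation of the von Neumann $\B$-$\CC$-module $F$, and $M_y\colon g\mapsto y\odot g$ the concrete realization of $y\in F$, the identity $\la y_1,b_\lambda y_2\ra=M_{y_1}^*\rho_F(b_\lambda)M_{y_2}$ gives $\la y_1,b_\lambda y_2\ra\to\la y_1,b\,y_2\ra$ $\sigma$-weakly in $\CC$, in particular in the weak operator topology of $\SB(\G)$; pairing against $g_1,g_2$ finishes the convergence. Hence $\rho_0$ is $\sigma$-weakly continuous on bounded sets, so it is normal, and $\overline{E\odot F}^s$ is a von Neumann $\A$-$\CC$-module.

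The step I expect to be the main obstacle — and the reason the argument is organized this way — is the bookkeeping of which representation of which algebra is in force at each stage: the hypothesis only provides normality of the Stinespring representation of $E$ relative to the fixed faithful normal representation of $\B$, whereas inside $E\odot F\odot\G$ the algebra $\B$ is implicitly acting on the Hilbert space $F\odot\G$. Converting $\sigma$-weak convergence in $\A$ into $\sigma$-weak convergence in $\B$ (via $\rho_E$) and then into $\sigma$-weak convergence in $\CC$ (via $\rho_F$), one interior tensor factor at a time, is precisely what lets one avoid proving separately that normality of a Stinespring representation is independent of the representing Hilbert space. A secondary point not to overlook is the identification $\overline{E\odot F}^s\odot\G\cong E\odot F\odot\G$, which guarantees that passing to the strong closure has not changed the representation whose normality must be verified.
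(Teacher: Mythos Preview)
The paper states this proposition as a preliminary fact without proof (it is one of the standard results about von Neumann modules imported from the references \cite{BS,MS-book}), so there is no argument in the paper to compare against. Your proof is correct and is essentially the standard one: reduce to normality of the Stinespring representation on $E\odot F\odot\G$, then chase $\sigma$-weak convergence through the two layers using the normality hypotheses on $\rho_E$ and $\rho_F$. The identification $\overline{E\odot F}^s\odot\G\cong E\odot F\odot\G$ and the use of boundedness of the net together with totality of elementary tensors to upgrade pointwise weak convergence to $\sigma$-weak continuity are exactly the right moves, and the subtlety you flag about which Hilbert space carries which representation is handled correctly by the compression identities $\la x_1,a_\lambda x_2\ra=L_{x_1}^*\rho_E(a_\lambda)L_{x_2}$ and $\la y_1,b_\lambda y_2\ra=M_{y_1}^*\rho_F(b_\lambda)M_{y_2}$.
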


\begin{definition}Due to Propositions \ref{prop-vNm-1},  and \ref{prop-vNm-3} we make the following conventions:
    \begin{enumerate}\label{conven}
        \item Whenever $\B$ is a von Neumann algebra and $\phi:\A\to \B$ is a CP map, by GNS-module we always mean $\overline{E}^s,$ where $E$ is the GNS-module, constructed above.
        \item If $E$ and $F$ are von Neumann modules, by tensor product of $E$ and $F$ we mean the strong closure $\overline{E\odot F}^s$ of $E\odot F$ and we still write $E\odot F.$
    \end{enumerate}
\end{definition}

\begin{remark}\label{comp}
		Let $\phi: \A\to\B$ and $\psi:\B\to \CC$ be CP maps with GNS-representations $(E,\xi)$ and $(F,\zeta)$ respectively. Let $(K, \kappa)$ be the GNS-construction of $\psi\circ \phi.$  Note that
	\begin{equation}
	\la \xi\odot\zeta, a\xi\odot\zeta\ra=\la \zeta,\la\xi,a\xi\ra\zeta\ra=\la \zeta,\phi(a)\zeta\ra=\psi\circ\phi(a)\quad \text{for all }a\in \A.
	\end{equation}
	This says that $(E\odot F ,\xi\odot \zeta)$ is a GNS-representation (not necessarily minimal) for $\psi\circ\phi.$
	Thus the the mapping
	\begin{equation}\label{eq-iso}
	\kappa\mapsto \xi\odot \zeta,
	\end{equation}
	extends as a unique bilinear isometry from $K$ to $E\odot F.$ Hence we may identify $K$ as the submodule  $\overline{\spn}(\A\xi\odot\zeta\CC)$ of $E\odot F.$
	
	Note that $E\odot F=\overline{\spn}(\A\xi\B\odot\B\zeta\CC)=\overline{\spn}(\A\xi\odot\B\zeta\CC)=\overline{\spn}(\A\xi\B\odot\zeta\CC).$
	
\end{remark}

In the following we define quantum dynamical semigroups and see
their connection with inclusion systems. We may take $\T $ as either the
semigroup of non-negative integers $\mathbb {Z}_+$ or as the
semigroup of non-negative reals $\mathbb {R}_+$ under addition, but
our real interest lies  in $\T= \mathbb {R} _+$, in view of quantum
theory of open systems.  For more details on this theory look at
\cite{Arv-book, Arv, BS}.
\begin{definition}
    Let $\A$ be a unital $C^*$-algebra. A family $\phi=(\phi_t)_{t\in \T}$ of CP maps on $\A$ is said to be a \textit{quantum dynamical semigroup} (QDS) or \textit{one-parameter CP-semigroup} if
    \begin{enumerate}
        \item $\phi_{s+t}=\phi_s\circ \phi_t$ for all $t\in \T,$
        \item $\phi_0(a)=a$ for all $a\in \A,$
        \item $\phi_t(\1)\le \1$  for all $t\in\T,$ (contractivity)
        %\item The map $t\mapsto \phi_t(a)$ is continuous for all $a\in \A.$ (strong continuity)
    \end{enumerate}

    It is said to be  \textit{conservative QDS} or \textit{quantum Markov semigroup} (QMS) if $\phi_t$ is unital for all $t\in\T.$ In practice, in addition to (1)-(3) we may assume continuity of $t\to \phi_t(a)$ in different topologies, depending upon the context.
\end{definition}

    \begin{definition}\label{inc-sys}
        Let $\B$ be a $C^*$-algebra. An \textit{inclusion system} $(E,\beta)$ is a  family $E=(E_t)_{t\in \T}$ of Hilbert $\B$-$\B$-modules with  $E_0=\B $ and  a family $\beta=(\beta_{s,t})_{s,t\in \T}$ of bilinear isometries $\beta_{s,t} :E_{s+t}\to E_s\odot E_t$ such that, for all $r,s,t\in \T,$
        \begin{equation}
        (\beta_{r,s}\odot \id_{E_t})\beta_{r+s,t}=(\id_{E_r}\odot\beta_{s,t})\beta_{r,s+t} .            \end{equation}
        It is said to be  a \textit{product system} if every $\beta_{s,t} $ is unitary.
    \end{definition}

\begin{remark}
    If $\B$ is  von Neumann algebra in Definition \ref{inc-sys}, then we consider inclusion system of von Neumann $\B$-$\B$-modules.
\end{remark}

\begin{definition}
	
	Let $(E,\beta)$ be an inclusion system. A family $\xi^\odot=(\xi_t)_{t\in\T}$ of vectors $\xi_t\in E_t$ is called a \emph{unit} for the inclusion system, if $\beta_{s,t}(\xi_{s+t})=\xi_s\odot\xi_t.$ It is said to be  \emph{unital} if $\la \xi_t,\xi_t\ra=1$ for all $t\in\T,$ and \emph{generating} if $\xi_t$ is cyclic in $E_t$ for all $t\in\T.$
	Suppose $(E,\beta)$ is a product system, a  unit $\xi^\odot=(\xi_t)_{t\in\T}$ is said to be a \emph{generating unit for the product system} $(E,\beta)$ if $E_t$ is spanned by images of elements $b_n\xi_{t_n}\odot\dots \odot b_1\xi_{t_1}b_0$ ($t_i\in\T,\sum t_i=t, b_i\in \B$) under successive applications of appropriate mappings $\id\odot\beta_{s,s'}^*\odot \id.$
%    Let $(E,\beta)$ be an inclusion system. A family $\xi^\odot=(\xi_t)_{t\in\T}$ of
%    vectors $\xi_t\in E_t$ is called a \textit{unit} for the inclusion system,
%     if $\beta_{s,t}(\xi_{s+t})=\xi_s\odot\xi_t$. A unit is called \textit{unital}, if $\la \xi_t,\xi_t\ra=1$ for all $t\in\T.$ A unit is called \textit{generating}, if $E_t$ is spanned by images of elements $b_n\xi_{t_n}\odot\dots \odot b_1\xi_{t_1}b_0$ ($t_i\in\T,\sum t_i=t, b_i\in \B$) under successive applications of appropriate mappings $\id\odot\beta_{s,s'}^*\odot \id.$
\end{definition}

    %\begin{remark}
        Suppose $(E,\beta)$ is an inclusion system with  a  unit $\xi^\odot.$  Consider $\phi_t:\B\to\B$ defined
        by $$\phi_t(b)=\la \xi_t, b\xi_t\ra \text{ for } b\in \B.$$ Then as $\beta_{s,t}$'s are bilinear isometries
        and $\xi^\odot$ is a unit, for $b\in\B$ we have  $$\phi_t\circ\phi_s(b)=\phi_t(\la \xi_s,b\xi_s\ra)=\la
        \xi_t,\la \xi_s,b\xi_s\ra\xi_t\ra=\la \xi_s\odot\xi_t,b(\xi_s\odot\xi_t)\ra=\la \xi_{t+s},b\xi_{t+s}\ra=\phi_{t+s}(b).$$ That
         is,  $(\phi_t)_{t\in\T}$ is a QDS. Clearly  $(\phi_t)_{t\in\T}$ is a QMS if  $\xi^\odot$ is unital.
    %\end{remark}
In the converse direction  we have the following remark:

    \begin{remark}\label{eg-incl-sys}
        Let $\phi=(\phi_t)_{t\in\T}$ be a QDS on a unital $C^*$-algebra $\B$ and let $(E_t,\xi_t)$ be the (minimal) GNS-construction for $\phi_t.$ (Recall that $\xi_t$ is a cyclic vector in $E_t$ such that $\phi_t(b)=\la \xi_t,b\xi_t\ra$ for all $b\in\B$).
        Note that $E_0=\B$ and $\xi_0=\1.$ Define $\beta_{s,t}:E_{s+t}\to E_s\odot E_t$ by
        \begin{equation}
        \xi_{t+s}\mapsto \xi_s\odot\xi_t.\label{eqn-inc-sys}
        \end{equation}
        Then by Remark \ref{comp} $\beta_{s,t}$'s are bilinear isometries. Now
        \begin{align*}
        (\beta_{r,s}\odot I_{E_t})\beta_{r+s,t}(\xi_{r+s+t})&=(\beta_{r,s}\odot I_{E_t})(\xi_{r+s}\odot \xi_t)
        =(\xi_r\odot\xi_s)\odot \xi_t\\
        &=\xi_r\odot(\xi_s\odot \xi_t)=(I_{E_r}\odot \beta_{s,t})(\xi_r\odot \xi_{s+t})\\
        &=(I_{E_r}\odot \beta_{s,t})\beta_{r,s+t}(\xi_{r+s+t})
        \end{align*}
        shows that $(E=(E_t),\beta=(\beta_{s,t}))$ is an inclusion system of Hilbert  $\B$-$\B$-module. It is obvious that $\xi^\odot=(\xi_t)$ is a generating unit for $(E,\beta).$

        Suppose  $\B$ is a von Neumann algebra and  each $\phi_t$ is a  \emph{normal} CP map on $\B,$ then recall from  Proposition \ref{prop-vNm-1} (see also Definition \ref{conven}) that the GNS-module $E_t=\overline{E}^s_t$ is a von Neumann $\B$-$\B$-module for all $t\in \T.$ In this case, $(E,\beta)$ is an inclusion system of von Neumann $\B$-$\B$-modules with the generating unit $\xi^\odot.$
    \end{remark}

%    \begin{remark}(DELETE)
%        If $\B$ is a von Neumann algebra and $\phi=(\phi_t)_{t\in\T}$ is a QDS of \emph{normal} CP maps $\phi_t$ on $\B,$ then $(E,\beta)$ as defined in Remark \ref{eg-incl-sys} is an inclusion system of von Neumann $\B$-$\B$-modules.  (Note that here by the GNS-module $E_t$ we mean the strong closure $\overline{E}^s_t$)
%    \end{remark}

\begin{definition}
    For a QDS $\phi=(\phi_t)_{t\ge 0}$  on $\B,$ the inclusion system with the generating unit  $(E,\beta,\xi^\odot)$ as given in Remark \ref{eg-incl-sys} is  called the {\em inclusion system associated to $\phi.$} Sometimes we will just  write $(E,\xi^\odot)$ instead of $(E,\beta,\xi^\odot).$ %  when $\beta$ is clear from the context.
\end{definition}

\begin{definition}
    Let $(E, \beta)$ and $(F, \gamma )$ be two inclusion systems. Let $T=(T_t)_{t\in \T}$ be a family of adjointable bilinear maps $T_t:E_t\to F_t,$ satisfying $\norm{T_t}\le e^{tk}$ for some $k\in \R.$ Then $T$ is said to
    be a \textit{morphism} or a \textit{weak morphism} from  $(E, \beta)$ to $(F, \gamma )$
    if every $\gamma _{s,t}$ is adjointable and
    \begin{equation}
    T_{s+t}=\gamma_{s,t}^* (T_s\odot T_t)\beta_{s,t} \text{ for all } s,t\in \T.
    \end{equation}
    It is said to be a \textit{strong morphism} if
    \begin{equation}
    \gamma_{s,t}T_{s+t}= (T_s\odot T_t)\beta_{s,t} \text{ for all } s,t\in \T.
    \end{equation}
\end{definition}

     \section{Block CP maps}\label{sec-blk-CP}

     Let $\A$ be a unital $C^*$-algebra. Let $p\in\A$ be a projection. Set $p' =\1-p.$ Then for every $x\in \A$ we have the following block decomposition:
     \begin{equation}
     x = \begin{pmatrix}
     p x p & px p' \\ p' x p & p' x p'
     \end{pmatrix}\in \begin{pmatrix}
     p\A p&  p\A p' \\ p' \A p & p' \A p'
     \end{pmatrix}.
     \end{equation}

     \begin{definition}
        Let $\A$ and $\B$ be unital $C^*$-algebras. Let $p\in\A$ and $q\in\B$ be projections. We say that a map $\Phi:\A\to\B$ is a \emph{block map} (with respect to $p$ and $q$) if $\Phi$ respects the above block decomposition. i.e., for all $x\in\A$ we have
        \begin{equation}
        \Phi(x) = \begin{pmatrix}
        \Phi(p x p) & \Phi(px p') \\ \Phi(p' x p) & \Phi(p' x p')
        \end{pmatrix}\in \begin{pmatrix}
        q\B q&  q\B q'  \\q'  \B q &q'  \B q'
        \end{pmatrix}.
        \end{equation}
     \end{definition}

     If $\Phi:\A\to\B$ is a block map, then we get the following four maps:
     $\phi_{11}:p \A p\to q\B q,$ $\phi_{12}:p \A p' \to q\B q',\phi_{21}:p'  \A p\to q' \B q,$ and $\phi_{22}:p' \A p'\to q' \B q' .$
     So we write $\Phi$ as
     \[\Phi=\begin{pmatrix}
     \phi_{11} &\phi_{12} \\\phi_{21} &\phi_{22}
     \end{pmatrix}.\]

    \begin{lemma}\label{lem-single}
    	Let $\A$ and $\B$ be unital $C^*$-algebras. For $i=1,2,$  let $\phi_i:\A\to \B$ be a CP map with GNS-representation $(E_i,x_i).$  Suppose $T: E_2\to E_1$ is  an adjointable bilinear contraction and $\psi:\A\to \B$ is given by $\psi(a)=\la x_1, Tax_2\ra.$ Then the block map $\Phi=\begin{pmatrix}
    	\phi_1 & \psi\\\psi^*& \phi_2
    	\end{pmatrix}: M_2(\A)\to M_2(\B)$ is CP.
    \end{lemma}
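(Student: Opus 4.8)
The plan is to pass to the Stinespring picture of the two GNS‑representations and thereby reduce the complete positivity of $\Phi$ to the elementary fact that a $2\times 2$ operator matrix $\bigl(\begin{smallmatrix}I&\tilde T\\ \tilde T^*&I\end{smallmatrix}\bigr)$ is positive as soon as $\norm{\tilde T}\le 1$.

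First I would fix a faithful nondegenerate representation of $\B$ on a Hilbert space $\G$ and, for $i=1,2$, form $\H_i:=E_i\odot\G$ together with the associated Stinespring representation $\rho_i:\A\to\SB(\H_i)$ and the operator $L_{x_i}\in\SB(\G,\H_i)$, so that $\phi_i(a)=L_{x_i}^*\rho_i(a)L_{x_i}$ for all $a\in\A$ (Remark \ref{Stinepring rep}). Put $\tilde T:=T\odot\id_\G\in\SB(\H_2,\H_1)$. Three properties of $\tilde T$ are needed and all are routine: it is adjointable with $\tilde T^*=T^*\odot\id_\G$; it is a contraction, because $\tilde T^*\tilde T=(T^*T)\odot\id_\G$ and $S\mapsto S\odot\id_\G$ is a $*$-homomorphism $\SB^a(E_2)\to\SB(\H_2)$, so $\id-T^*T\ge 0$ in $\SB^a(E_2)$ forces $\id_{\H_2}-\tilde T^*\tilde T\ge 0$; and $\tilde T$ intertwines the Stinespring representations, $\tilde T\rho_2(a)=\rho_1(a)\tilde T$ for all $a\in\A$, which is immediate from the bilinearity $T(ax)=aT(x)$ evaluated on elementary tensors $x\odot g$. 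A short computation then gives $\psi(a)=\la x_1,T(ax_2)\ra=L_{x_1}^*\tilde T\rho_2(a)L_{x_2}$ and hence $\psi^*(a)=L_{x_2}^*\rho_2(a)\tilde T^*L_{x_1}$.

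To prove that $\Phi$ is CP it suffices to show $\sum_{i,j}\la\zeta_i,\Phi(X_i^*X_j)\zeta_j\ra\ge 0$ for every finite family $X_1,\dots,X_n\in M_2(\A)$ and $\zeta_1,\dots,\zeta_n\in\G\oplus\G$, where $M_2(\B)$ is represented faithfully on $\G\oplus\G$; this is the standard restatement of complete positivity through positivity of $[\Phi(X_i^*X_j)]$ in $M_n(M_2(\B))$. Writing $X_i=\bigl(\begin{smallmatrix}a_i&b_i\\ c_i&d_i\end{smallmatrix}\bigr)$ and $\zeta_i=(g_i,h_i)$, I would expand $X_i^*X_j$ and $\Phi(X_i^*X_j)$ block‑wise, insert the formulas of the previous paragraph, and use $\tilde T\rho_2(a)=\rho_1(a)\tilde T$ together with its adjoint to move every $\tilde T$ past the $\rho$'s. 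Setting
\begin{gather*}
\alpha:=\sum_i\rho_1(a_i)L_{x_1}g_i,\qquad \gamma:=\sum_i\rho_1(c_i)L_{x_1}g_i\quad\text{in }\H_1,\\
\beta:=\sum_i\rho_2(b_i)L_{x_2}h_i,\qquad \delta:=\sum_i\rho_2(d_i)L_{x_2}h_i\quad\text{in }\H_2,
\end{gather*}
the diagonal blocks of $\Phi(X_i^*X_j)$ contribute $\norm{\alpha}^2+\norm{\gamma}^2$ and $\norm{\beta}^2+\norm{\delta}^2$ to the sum, while the two off‑diagonal blocks together contribute $2\re\la\alpha,\tilde T\beta\ra+2\re\la\gamma,\tilde T\delta\ra$; thus
\begin{equation*}
\sum_{i,j}\la\zeta_i,\Phi(X_i^*X_j)\zeta_j\ra=\bigl(\norm{\alpha}^2+\norm{\beta}^2+2\re\la\alpha,\tilde T\beta\ra\bigr)+\bigl(\norm{\gamma}^2+\norm{\delta}^2+2\re\la\gamma,\tilde T\delta\ra\bigr).
\end{equation*}

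Since $\norm{\tilde T\beta}\le\norm{\beta}$, the first bracket is at least $\norm{\alpha}^2+\norm{\tilde T\beta}^2+2\re\la\alpha,\tilde T\beta\ra=\norm{\alpha+\tilde T\beta}^2\ge 0$, and similarly for the second; adding, the whole sum is nonnegative, so $\Phi$ is CP. The one place that needs care is the block‑wise expansion in the previous paragraph: one must keep track of which of $\H_1,\H_2$ each partial sum lives in and apply the intertwining relation (and its adjoint) in exactly the right positions so that the four cross terms collapse to $\la\alpha,\tilde T\beta\ra$, $\la\gamma,\tilde T\delta\ra$ and their conjugates; everything else is elementary $C^*$‑algebra together with the one‑line estimate just given.
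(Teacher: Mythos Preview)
Your proof is correct, but it takes a genuinely different route from the paper's. The paper never leaves the Hilbert $C^*$-module setting: it puts $y:=Tx_2\in E_1$ and splits
\[
\Phi\begin{pmatrix}a&b\\c&d\end{pmatrix}
=\begin{pmatrix}\la x_1,ax_1\ra&\la x_1,by\ra\\\la y,cx_1\ra&\la y,dy\ra\end{pmatrix}
+\begin{pmatrix}0&0\\0&\la x_2,d(\id_{E_2}-T^*T)x_2\ra\end{pmatrix},
\]
observing that the first summand is CP because $x_1$ and $y$ live in the \emph{same} module $E_1$, and the second is CP because $\id_{E_2}-T^*T\ge 0$ and is bilinear. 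This two-line decomposition avoids any faithful representation of $\B$, any Stinespring amplification, and the entire quadratic-form computation you carry out. Your approach instead passes to Hilbert spaces $\H_i=E_i\odot\G$, uses the intertwining contraction $\tilde T=T\odot\id_\G$, and verifies complete positivity by the direct estimate $\norm{\alpha}^2+\norm{\beta}^2+2\re\la\alpha,\tilde T\beta\ra\ge\norm{\alpha+\tilde T\beta}^2$. Both arguments ultimately hinge on $\norm{T}\le 1$, but the paper's decomposition is shorter and module-intrinsic, while yours makes the role of the $2\times2$ positivity $\bigl(\begin{smallmatrix}I&\tilde T\\\tilde T^*&I\end{smallmatrix}\bigr)\ge 0$ more visibly explicit and connects the statement to the classical Stinespring picture used later in Corollary~\ref{single-cor}.
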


     \begin{proof}
       	Set $y=Tx_2\in E_1.$ Then
       	\begin{equation*}
       	\Phi \begin{pmatrix}
     	a &b\\c&d
     	\end{pmatrix}
     	= \begin{pmatrix}
     	\la x_1,a x_1 \ra&\la x_1,b y \ra\\\la y, cx_1\ra&\la y, dy\ra
     	\end{pmatrix} +\begin{pmatrix}
     	0&0\\0&\la x_2, d(\id_{E_2}-T^*T)x_2\ra
     	\end{pmatrix}.
     	\end{equation*}
        Clearly $\begin{pmatrix}
     	a &b\\c&d
     	\end{pmatrix}\mapsto\begin{pmatrix}
     	\la x_1,a x_1 \ra &\la x_1,b y \ra\\\la y, cx_1\ra &\la y, dy\ra
     	\end{pmatrix}$ is CP.
     	Since $T$ is an adjointable bilinear contraction, $(\id_{E_2}-T^*T)$ is bilinear and positive.  Hence  $\begin{pmatrix}
     	a &b\\c&d
     	\end{pmatrix}\mapsto\begin{pmatrix}
     	0&0\\0&\la x_2, d(\id_{E_2}-T^*T)x_2\ra
     	\end{pmatrix}$ is CP. Therefore $\Phi$ is CP.
     \end{proof}

%The converse of Lemma \ref{lem-single} is true when $\B$ is a von Neumann algebra and to prove that we need the following crucial observation.

        Let $F$ be a Hilbert $M_2(\B)$-module.  Define a right $\B$-module action and a $\B$-valued semi-inner product  $\la \cdot,\cdot\ra_\Sigma$ on $F$ by
        \begin{equation*}\label{eq-right-act}
        x b:=x\begin{pmatrix}
        b  & 0\\0 & b
        \end{pmatrix} \text{ and }\la x,y\ra_\Sigma:=\sum\limits_{i,j=1}^{2}\la x,y\ra_{i,j}\quad\text{for }  x,y\in F, b \in \B.
        \end{equation*}
        where $\la x,y\ra_{i,j}$ denotes the $(i,j)$\textsuperscript{th} entry of $\la x,y\ra\in M_2(\B).$

        Let  $F^{(\B)}$ denote the quotient space $F/N$ where $N=\{x:\la  x,x\ra_\Sigma=0\}.$ (We  denote the coset $x+N$ of $x\in F$ by  $[x]_F$ or  just by $[x]$). Then $F^{(\B)}$ is a pre-Hilbert $\B$-module with right $\B$-action and  inner product given by
        \begin{equation}
        [x]b=[x\begin{pmatrix}
        b  & 0\\0 & b
        \end{pmatrix}] \text{ and }\la [x],[y]\ra=\la x,y\ra_\Sigma=\sum_{i,j=1}^2 \la x,y\ra_{i,j}\quad\text{for }x,y\in F,b\in\B.
        \end{equation}

    \begin{proposition}\label{new module}
        	     If $F$ is a Hilbert (von Neumann) $M_2(\B)$-module, then $F^{(\B)}$ is a Hilbert (von Neumann) $\B$-module. %If $F$ is a von Neumann $M_2(\B)$-module, then $F^{(\B)}$ is a von Neumann $\B$-module.
    \end{proposition}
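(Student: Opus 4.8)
\emph{Proof idea.}
The plan is to realize $F^{(\B)}$ concretely as a corner of $F$, after which both assertions follow from the corresponding facts about corners of (von Neumann) Hilbert modules. In $M_2(\B)$ put $p_1=\begin{pmatrix}\1&0\\0&0\end{pmatrix}$ and $e=\begin{pmatrix}\1&0\\\1&0\end{pmatrix}$; a one-line matrix multiplication gives, for all $x,y\in F$,
\[
e^{*}\la x,y\ra e=\begin{pmatrix}\la x,y\ra_\Sigma&0\\0&0\end{pmatrix}.
\]
I would then check that $[x]\mapsto xe$ is a well-defined isometric isomorphism of $\B$-modules from $F^{(\B)}$ onto the corner $Fp_1=\{yp_1:y\in F\}$, where $Fp_1$ is regarded as a Hilbert module over the corner algebra $p_1M_2(\B)p_1\cong\B$ with the restricted $M_2(\B)$-valued inner product. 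Everything hinges on the displayed identity: $xe=(xe)p_1$ lies in $Fp_1$, and $\la xe,x'e\ra$ lies in $p_1M_2(\B)p_1$ and corresponds to $\la x,x'\ra_\Sigma$ under $p_1M_2(\B)p_1\cong\B$, so the map is inner-product preserving; hence it descends to $F/N$ and is injective, it is visibly $\B$-linear, and it is surjective because $e=up_1$ with $u=\begin{pmatrix}\1&0\\\1&\1\end{pmatrix}$ invertible in $M_2(\B)$, so that $Fe=Fup_1=Fp_1$.

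Granting the identification $F^{(\B)}\cong Fp_1$, the $C^{*}$-algebra case is immediate: $Fp_1$ is norm-closed in $F$ (if $y_np_1\to z$ in $F$ then $z=zp_1\in Fp_1$), hence complete, so $F^{(\B)}$ is a Hilbert $\B$-module. For the von Neumann case, represent $\B$ on $\G$, so $M_2(\B)$ acts on $\G\oplus\G$ and $F$ is a strongly closed submodule of $\SB(\G\oplus\G,\K)$ with $\K=F\odot(\G\oplus\G)$. Then $Fp_1$ is carried to $\{Sp_1:S\in F\}=\{S\in F:Sp_1=S\}$, the intersection of the strongly closed module $F$ with the strongly closed set $\{S:Sp_1=S\}$ (right multiplication by $p_1$ is strongly continuous), hence strongly closed in $\SB(\G\oplus\G,\K)$; on the set $\{S:Sp_1=S\}$, restricting the domain from $\G\oplus\G$ to $p_1(\G\oplus\G)\cong\G$ is a strong homeomorphism onto $\SB(\G,\K)$, and a short computation identifies $Fp_1\odot\G$ with $\K$ in such a way that this restriction is precisely the canonical embedding of $Fp_1$ into $\SB(\G,Fp_1\odot\G)$. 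Therefore $Fp_1$, and hence $F^{(\B)}$, is a von Neumann $\B$-module.

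The statement is not a formality: $F^{(\B)}=F/N$ carries the module norm $\|[x]\|=\|\la x,x\ra_\Sigma\|^{1/2}$, which in general is strictly dominated by the quotient norm inherited from $F$ (already for $\B=\C$, $F=M_2(\C)$ the submodule $N=\{A:A\binom{\1}{\1}=0\}$ is nonzero, and the two norms genuinely differ), so completeness does not come for free from the quotient construction --- the corner realization is what supplies it. The main technical obstacle I expect is the von Neumann bookkeeping of the last step: verifying that the passage from $F$ to $Fp_1$ together with the restriction of the Stinespring domain respects the strong operator topology. That reduces to the observation above, that right multiplication by the projection $p_1\in M_2(\B)$ is a strongly continuous idempotent preserving $F$, so that $Fp_1$, being its fixed-point set inside $F$, is strongly closed.
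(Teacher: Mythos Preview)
Your argument is correct and takes a genuinely different route from the paper's. The paper proceeds by a direct norm computation: it observes that $\bigl\|[x]\bigr\|$ is a fixed scalar multiple of $\bigl\|x\bigl(\begin{smallmatrix}1&1\\1&1\end{smallmatrix}\bigr)\bigr\|$, and uses this to transport Cauchy sequences (resp.\ strongly convergent nets) from $F^{(\B)}$ into $F$ and back, so completeness (resp.\ strong closure) of $F$ is inherited. You instead realize $F^{(\B)}$ concretely as the corner $Fp_1$ via the map $[x]\mapsto xe$ with $e=\bigl(\begin{smallmatrix}\1&0\\ \1&0\end{smallmatrix}\bigr)$, and then invoke the stability of Hilbert and von Neumann modules under taking corners. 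The two approaches are closely related under the hood---note that $ee^{*}=\bigl(\begin{smallmatrix}1&1\\1&1\end{smallmatrix}\bigr)$ is precisely the matrix the paper multiplies by---but yours is more structural: it explains \emph{why} the quotient is complete rather than verifying it by hand, and it generalizes transparently to $M_n(\B)$-modules by replacing $e$ with the analogous $n\times n$ matrix. The paper's approach, on the other hand, is entirely self-contained and avoids the bookkeeping you flag in your last paragraph (identifying $Fp_1\odot_\B\G$ with $F\odot_{M_2(\B)}(\G\oplus\G)$ and matching the two Stinespring embeddings); that step is routine, as you say, but it does need to be written out---the surjectivity of $Fp_1\odot(\G\oplus 0)\to\K$ uses that the matrix unit $\mathbb E_{21}$ carries $Fp_2\odot(0\oplus\G)$ into $Fp_1\odot(\G\oplus 0)$.
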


      \begin{proof}  Let $F$ be a Hilbert $M_2(\B)$-module. For each  $x\in F,$ we have $[x]=[x\begin{pmatrix}
      1/2&1/2\\1/2&1/2
      	\end{pmatrix}] $  and
      	       \begin{equation}\label{eq-norms-equal}
       \norm{[x]}=\norm{\sum_{i,j=1}^2\la x,x\ra_{i,j}}^\frac{1}{2}
       %=\norm{\begin{pmatrix} 1&1\\1&1 	\end{pmatrix}\la x, x\ra\begin{pmatrix}       	1&1\\1&1       	\end{pmatrix} }\frac{1}{2}
       =\norm{x\begin{pmatrix}
       	1&1\\1&1
       	\end{pmatrix}}.
       \end{equation}
        Consider a Cauchy sequence $([x_n])_{n\ge 1}$ in $F^{(\B)}.$ Set $y_n=x_n\begin{pmatrix}
       1&1\\1&1 \end{pmatrix}\in F.$ Then by \eqref{eq-norms-equal}, $(y_n)_{n\ge 1}$ is a Cauchy sequence in $F.$ Let $y=\lim_{n\to\infty}y_n$ in $F.$ Then $y=y\begin{pmatrix}
       1/2&1/2\\1/2&1/2
       \end{pmatrix}.$ Take $x=\frac{y}{2}.$
       Then, again by using \eqref{eq-norms-equal}, we see that $([x_n])_{n\ge 1}$ converges to $[x]$ in $F^{(\B)}.$ Thus $F^{(\B)}$ is complete.

       Now assume that $F$ is von Neumann $M_2(\B)$-module. Let $\B\se \BG.$ So $F^{(\B)}\se \SB(\G,F^{(\B)}\odot \G)$ and $F\se \SB(\G^2, F\odot \G^2)$ where $\G^2=\G\oplus \G.$  We have for $x\in F, g_1,g_2\in\G,$
       \begin{equation}\label{eq-SOT-norm}
       \norm{[x]\odot (g_1+g_2)}=\left\la g_1+g_2,\sum_{i,j=1}^2\la x,x\ra_{i,j} (g_1+g_2)\right\ra^\frac{1}{2}=\norm{x\begin{pmatrix}
       		1&1\\1&1
       	\end{pmatrix}\odot \begin{pmatrix}
       	g_1\\g_2
       	\end{pmatrix}}.
       \end{equation}
             %Recall that a sequence $([x_\lambda])_\lambda$ is SOT-Cauchy in $F^{(\B)}$  means that $([x_\lambda]\odot g)_\lambda$ is Cauchy for all $g\in\G$ and a sequence $(y_\lambda)_\lambda$ is SOT-Cauchy in $F$  means that $(y_\lambda\odot \underline{g})_\lambda$ is Cauchy for all $\underline{g}\in\G^2.$
       Using \eqref{eq-SOT-norm}, we can prove as in the above case, that $F^{(\B)}$ is SOT closed in $\SB(\G,F^{(\B)}\odot \G)$ and hence  $F^{(\B)}$ is  a von Neumann $\B$-module.
    \end{proof}
%
%               Let $(x_n+N)_{n\ge 1}$ be a Cauchy sequence in SOT. That is, $(x_n+N\odot g)_{n\ge 0}$ is Cauchy for all $g\in \G.$   Set $y_n=x_n\begin{pmatrix}
%       \frac{1}{2}&\frac{1}{2}\\\frac{1}{2}&\frac{1}{2}
%       \end{pmatrix}\in F.$
%      As $\B\se \BG,$ we have  $M_2(\B)\se \SB(\G^2)$ where $\G^2=\G\oplus \G.$  Note for any $\begin{pmatrix}
%      g_1\\g_2
%      \end{pmatrix}\in\G^2$ that,
%      \begin{equation*}
%\left\la \begin{pmatrix}
%g_1\\g_2
%\end{pmatrix}, \la y_n-y_m,y_n-y_m\ra \begin{pmatrix}
%g_1\\g_2
%\end{pmatrix}\right\ra=\left\la (g_1+g_2), \frac{1}{4}\sum_{i,j=1}^2\la x_n-x_m,x_n-x_m\ra_{i,j}(g_1+g_2)\right\ra.
%      \end{equation*}
%      Therefore from \eqref{eq-SOT-Cauchy} it follows that $(y_n)_{n\ge 1}$ is SOT-Cauchy   in $F\se \SB(\G^2,F\odot \G^2).$ Let $x$ be the SOT limit of $(y_n)_{n\ge 1}.$
%

         Let $F$ be a Hilbert $M_2(\B)$-module. Suppose $F$ has a nondegenerate left action of $\A,$ then  \eqref{eq-norms-equal} implies that the natural left action of $\A$ on  $F^{(\B)}$ given by
        \begin{equation}\label{new-left-act}
        a [x]:=[ax]\quad\text{for } a\in\A, x\in F
        \end{equation}
        is a well defined nondegenerate action.

        \begin{proposition}\label{new-two-sided}
        	        If $F$ is a Hilbert (von Neumann) $\A$-$M_2(\B)$-module, then $F^{(\B)}$ is a Hilbert (von Neumann) $\A$-$\B$-module with the left action defined in \eqref{new-left-act}. %If $F$ is a von Neumann $M_2(\A)$-$M_2(\B)$-module, then  $F^{(\B)}$ is a von Neumann $\A$-$\B$-module.
       \end{proposition}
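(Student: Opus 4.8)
The plan is to build on Proposition~\ref{new module}, which already supplies the underlying Hilbert (resp.\ von Neumann) $\B$-module structure of $F^{(\B)}$, and on the paragraph preceding the statement, which records that $a[x]:=[ax]$ defines a well-defined nondegenerate action of $\A$ on $F^{(\B)}$. In the Hilbert case it therefore remains only to check that the action $a\mapsto(a\,\cdot)$ takes values in the adjointable operators $\SB^{a}(F^{(\B)})$ and is a $*$-homomorphism. It is clearly $\C$-linear and multiplicative, and it is adjointable with $(a\,\cdot)^{*}=(a^{*}\,\cdot)$: since $F$ is a Hilbert $\A$-$M_2(\B)$-module the left action of $\A$ on $F$ is by adjointable operators, so $\la ax,y\ra=\la x,a^{*}y\ra$ in $M_2(\B)$ for all $x,y\in F$, and summing the four matrix entries gives $\la a[x],[y]\ra=\la[x],a^{*}[y]\ra$. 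That settles the Hilbert case.

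For the von Neumann case one must in addition check that the Stinespring representation $\rho\colon\A\to\SB(F^{(\B)}\odot\G)$ is normal. The key idea is to realise $F^{(\B)}\odot\G$ isometrically and $\A$-equivariantly inside the Stinespring space $F\odot\G^{2}$ of $F$ (where $\G^{2}=\G\oplus\G$ carries the usual action of $M_2(\B)$), so that $\rho$ becomes a compression of the normal Stinespring representation $\rho_{F}$ of $F$. Explicitly I would set
\[
U\colon F^{(\B)}\odot\G\longrightarrow F\odot\G^{2},\qquad U\bigl([x]\odot g\bigr)=x\odot(g\oplus g).
\]
A direct computation (in the spirit of \eqref{eq-SOT-norm}) gives $\la x\odot(g\oplus g),x'\odot(g'\oplus g')\ra_{F\odot\G^{2}}=\la g,\la x,x'\ra_{\Sigma}\,g'\ra=\la[x]\odot g,[x']\odot g'\ra$, and in particular $\bigl\|w\odot(g\oplus g)\bigr\|^{2}=\la g,\la w,w\ra_{\Sigma}\,g\ra$ for $w\in F$. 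The last identity shows that $x\odot(g\oplus g)$ does not depend on the representative $x$ of $[x]$, and a short check ($x\begin{pmatrix}b&0\\0&b\end{pmatrix}\odot(g\oplus g)=x\odot(bg\oplus bg)$) shows $U$ respects the $\B$-balancing; together with the first identity this shows $U$ is inner-product preserving, hence extends to a Hilbert-space isometry. Finally $U\bigl([ax]\odot g\bigr)=(ax)\odot(g\oplus g)=\rho_{F}(a)\,U\bigl([x]\odot g\bigr)$, so $U\rho(a)=\rho_{F}(a)U$ and therefore $\rho(a)=U^{*}\rho_{F}(a)U$ for every $a\in\A$. Since $\rho_{F}$ is normal by hypothesis and $T\mapsto U^{*}TU$ is normal, $\rho$ is normal, which completes the proof.

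I expect the one genuinely delicate point to be the well-definedness of $U$: at first sight $[x]\odot g\mapsto x\odot(g\oplus g)$ is suspect because $[x]$ does not determine $x$, and what rescues it is precisely that the semi-inner product $\la\cdot,\cdot\ra_{\Sigma}$ controls the norm of $x\odot(g\oplus g)$ inside $F\odot\G^{2}$. Everything else---the Hilbert-module axioms, the intertwining relation, and the passage from normality of $\rho_{F}$ to normality of $\rho$---is routine.
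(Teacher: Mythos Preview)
Your proposal is correct and follows essentially the same approach as the paper: both rest on the identity $\|[x]\odot g\|=\|x\odot(g\oplus g)\|$ (the paper's \eqref{eq-for-normal}), which you package as an explicit $\A$-intertwining isometry $U$ so that $\rho=U^{*}\rho_{F}U$, while the paper simply records the identity and says normality of $\rho$ follows from that of the Stinespring representation of $F$. Your formulation is a bit more explicit (and your check of adjointability in the Hilbert case fills in what the paper dismisses as ``clear''), but the substance is the same.
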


       \begin{proof}
       If $F$ is a Hilbert $\A$-$M_2(\B)$-module, then clearly $F^{(\B)}$ is a Hilbert $\A$-$\B$-module. We shall prove that if $F$ is a von Neumann $\A$-$M_2(\B)$-module, then  $F^{(\B)}$ is a von Neumann $\A$-$\B$-module. Let $\B\se \BG.$ So $F^{(\B)}\se \SB(\G,F^{(\B)}\odot \G)$ and $F\se \SB(\G^2, F\odot \G^2)$ where $\G^2=\G\oplus \G.$ We must show that the  Stinespring representation  $\rho:\A\to\SB(F^{(\B)}\odot \G)$ of $\A$  given by  $\rho(a)([x]\odot g) =a[x]\odot g$ is normal. 	For any $x\in F,g\in \G,$ a computation similar to \eqref{eq-SOT-norm} implies that
       	\begin{equation}\label{eq-for-normal}
       	\norm{[x]\odot g}=\norm{x\odot \begin{pmatrix}
       		g\\g
       		\end{pmatrix}}.
       	\end{equation}
       	
       	 As the Stinespring representation $\hat{\rho}:\A\to \SB(F\odot \G^2)$ given by $\tilde{\rho}(a)(x\odot \underline{g})= ax\odot \underline{g}$ for $ a\in\A, \underline{g}\in \G^2$  is normal, using    \eqref{eq-for-normal}, we can see that $\rho$ is normal.
       \end{proof}
    \begin{remark}\label{obs-new-module}
    	   Suppose $F$ is a Hilbert (von Neumann) $M_2(\A)$-$M_2(\B)$-module, then we can consider $F$ as a Hilbert (von Neumann) $\A$-$M_2(\B)$-module by considering the left action of $\A$ given by
   \begin{equation}
   ax:=\begin{pmatrix}
   a &0\\0&a
   \end{pmatrix}x\quad \text{for } x\in F, a\in \A.
   \end{equation}
   Therefore, Proposition \ref{new-two-sided} shows that, if $F$ is a Hilbert (von Neumann) $M_2(\A)$-$M_2(\B)$-module, then $F^{(\B)}$ is a Hilbert (von Neumann) $\A$-$\B$-module.
    \end{remark}
      \begin{remark}
 Let $E\se F$ be a $M_2(\B)$-submodule of a $M_2(\B)$-module $F.$ Then
$$E^{(\B)}\simeq \{[x]_F:x\in E\}\se F^{(\B)}.$$
      \end{remark}

%      For $x\in F,g\in\G,$ we have
%    \begin{align*}
%    |\la x+N\odot g, (a-a_n)x+N\odot g\ra|&=|\la g,\la x, (a-a_n)x\ra g\ra|
%    \to 0 \quad\text{as} \quad n\to\infty
%    \end{align*}
%    Since $F$ is a von Neumann module, the map $\hat{\rho}:M_2(\A)\to \SB(\G^2,F\odot \G^2)$ given by $\tilde{\rho}(A)\left(x\odot \begin{pmatrix}
%    g_1\\g_2
%    \end{pmatrix}\right)= Ax\odot \begin{pmatrix}
%    g_1\\g_2
%    \end{pmatrix}, A\in M_2(\A), \begin{pmatrix}
%    g_1\\g_2
%    \end{pmatrix}\in \G^2$  is normal, we have for $g\in\G,$
%     \begin{equation}
%    \left|\left\la  \begin{pmatrix}
%    g\\g
%    \end{pmatrix} ,\left\la x, \begin{pmatrix}
%    a-a_n& 0\\0& a-a_n
%    \end{pmatrix}x\right\ra \begin{pmatrix}
%    g\\g
%    \end{pmatrix} \right\ra\right|=\left|\left\la g, \sum_{i,j=1}^2 \left\la x, \begin{pmatrix}
%    a-a_n& 0\\0& a-a_n
%    \end{pmatrix}x\right\ra_{i,j}g \right\ra\right|
%    \end{equation}
%    goes to zero as $n$ goes to infinity.

 \begin{theorem}\label{main-single}
    Let $\A$ be a unital $C^*$-algebra and $\B$ be a  von Neumann algebra on a Hilbert space $\G.$ For $i=1,2,$ let $\phi_i:\A\to \B$ be a  CP map with a GNS-representation $(F_i, y_i).$  Suppose $\Phi=\begin{pmatrix}
    \phi _1 & \psi\\
    \psi^* & \phi _2
    \end{pmatrix} : M_2(\A)\to M_2(\B)$ is a block CP map for some CB map $\psi:\A\to\B$ then, there is an adjointable bilinear contraction $T:F_2\to F_1 $ such that $\psi(a)=\la y_1, Tay_2\ra $ for all $a\in \A$.
 \end{theorem}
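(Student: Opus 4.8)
The plan is to build the map $T$ directly from a GNS-representation (equivalently, a Stinespring representation) of the full block map $\Phi$, and then verify that the off-diagonal entry of $\Phi$ is reproduced by compressing a canonical contraction. First I would form the GNS-construction $(F,\zeta)$ of $\Phi\colon M_2(\A)\to M_2(\B)$, where $F$ is a Hilbert $M_2(\A)$-$M_2(\B)$-module with cyclic vector $\zeta$ satisfying $\Phi(x)=\langle\zeta, x\zeta\rangle$; since $M_2(\B)$ is a von Neumann algebra I pass to $\overline{F}^s$ so that $F$ is a von Neumann module (Proposition \ref{prop-vNm-1}). Using the projections $e_{11},e_{22}\in M_2(\A)$ and the vectors $\zeta_i:=e_{ii}\zeta\in F$, the diagonal compressions give $\phi_i(a)=\langle \zeta_i, \begin{pmatrix}a&0\\0&a\end{pmatrix}\zeta_i\rangle$ in the $(i,i)$-corner, so that $(F_i',\zeta_i)$ — where $F_i'$ is the submodule generated by $\zeta_i$ viewed appropriately — is a (not necessarily minimal) GNS-representation of $\phi_i$; one also reads off $\psi(a)=\langle \zeta_1, \begin{pmatrix}a&0\\0&a\end{pmatrix}\zeta_2\rangle$ from the $(1,2)$-corner.

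The key technical device is the functor $F\mapsto F^{(\B)}$ developed in Propositions \ref{new module}, \ref{new-two-sided} and Remark \ref{obs-new-module}: since $F$ is a von Neumann $M_2(\A)$-$M_2(\B)$-module, $F^{(\B)}$ is a von Neumann $\A$-$\B$-module. Inside $F^{(\B)}$ I would consider the images $[\zeta_i]\in F^{(\B)}$ and the submodules $G_i:=\overline{\spn}(\A[\zeta_i]\B)$. A direct computation with the definition of $\langle\cdot,\cdot\rangle_\Sigma$ shows $\langle[\zeta_i],a[\zeta_i]\rangle = \langle\zeta_i,\begin{pmatrix}a&0\\0&a\end{pmatrix}\zeta_i\rangle_\Sigma = \phi_i(a)$ (using that $\zeta_i=e_{ii}\zeta$ kills the cross terms), so $(G_i,[\zeta_i])$ is a minimal GNS-representation of $\phi_i$; by uniqueness of the minimal GNS-representation there is a bilinear unitary $G_i\cong F_i$ sending $[\zeta_i]\mapsto y_i$. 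Now $F^{(\B)}$ is a von Neumann module, hence self-dual, so the orthogonal projection $p_1\colon F^{(\B)}\to G_1$ exists and is adjointable and bilinear. I would then define $T_0\colon G_2\to G_1$ by $T_0 = p_1\!\restriction_{G_2}$ and transport it through the unitaries to obtain $T\colon F_2\to F_1$, which is adjointable, bilinear, and contractive since it is a restriction of a projection. Finally, for $a\in\A$,
\begin{equation*}
\langle y_1, Tay_2\rangle = \langle [\zeta_1], p_1 (a[\zeta_2])\rangle = \langle [\zeta_1], a[\zeta_2]\rangle = \phi\text{-cross term} = \psi(a),
\end{equation*}
where the second equality uses that $[\zeta_1]\in G_1$ so the projection may be dropped, and the third is the $(1,2)$-corner computation above — here one must check $\langle[\zeta_1],a[\zeta_2]\rangle=\langle\zeta_1,\begin{pmatrix}a&0\\0&a\end{pmatrix}\zeta_2\rangle$, which again follows because $\zeta_i=e_{ii}\zeta$ forces the $\Sigma$-sum to pick out exactly the $(1,2)$ entry of $\langle\zeta,\begin{pmatrix}a&0\\0&a\end{pmatrix}\zeta\rangle=\Phi\begin{pmatrix}0&a\\0&0\end{pmatrix}$.

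The main obstacle I anticipate is bookkeeping in the functor $F\mapsto F^{(\B)}$: one must carefully track how the $M_2(\B)$-valued inner product, the matrix units $e_{ij}$, and the "doubled" left action $a\mapsto\begin{pmatrix}a&0\\0&a\end{pmatrix}$ interact, and verify that the various identifications ($G_i\cong F_i$ via the cyclic vectors, and the contractivity/adjointability of $T$) are genuinely bilinear and respect the $\B$-module structure. A secondary subtlety is that the GNS-representations $(F_i,y_i)$ in the hypothesis are not assumed minimal, so the unitary identification only works after passing to the minimal submodules; one then extends $T$ to all of $F_2$ by composing with the coisometry $F_2\to G_2$ (or, equivalently, by only ever needing $T$ on the cyclic vector and its span, since the formula $\psi(a)=\langle y_1,Tay_2\rangle$ only involves $y_2$). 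Von Neumann-module self-duality — invoked for the existence of $p_1$ and for adjointability throughout — is exactly the point where the hypothesis that $\B$ is a von Neumann algebra (rather than a general $C^*$-algebra) is used, consistent with the counterexample the authors promise in this section.
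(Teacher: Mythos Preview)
Your overall architecture matches the paper's --- build the GNS module $(F,\zeta)$ for $\Phi$, pass to $F^{(\B)}$, locate copies of the minimal GNS modules of $\phi_1,\phi_2$ inside, and produce $T$ as a composition of partial isometries and a comparison map --- but the specific comparison map you choose is wrong, and the argument collapses there.

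The error is the claim that $\langle[\zeta_1],a[\zeta_2]\rangle=\psi(a)$. With $\zeta_i=e_{ii}\zeta$ and the diagonal left action, one has
\[
\Bigl\langle \zeta_1,\begin{pmatrix}a&0\\0&a\end{pmatrix}\zeta_2\Bigr\rangle
=\Bigl\langle \zeta,\;e_{11}\begin{pmatrix}a&0\\0&a\end{pmatrix}e_{22}\;\zeta\Bigr\rangle
=\langle\zeta,0\cdot\zeta\rangle=0,
\]
so in fact $G_1\perp G_2$ inside $F^{(\B)}$, and your map $T_0=p_1\!\restriction_{G_2}$ is identically zero. (Equivalently: $\langle\zeta,\begin{pmatrix}a&0\\0&a\end{pmatrix}\zeta\rangle=\Phi\begin{pmatrix}a&0\\0&a\end{pmatrix}$, whose $(1,2)$ entry is $0$, not $\psi(a)$; the diagonal left action on $F^{(\B)}$ never touches the off-diagonal of $\Phi$.) The missing idea is an explicit \emph{shift} using the off-diagonal matrix unit $\mathbb{E}_{12}$. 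The paper first decomposes $E=\hat E_1\oplus\hat E_2$ with $\hat E_i=\mathbb{E}_{ii}E$, and then defines a bilinear \emph{unitary} $U:\hat E_2^{(\B)}\to\hat E_1^{(\B)}$ by $U[w]=[\mathbb{E}_{12}w]$. This is what converts the diagonal action into an off-diagonal one:
\[
\mathbb{E}_{12}\begin{pmatrix}a&0\\0&a\end{pmatrix}\mathbb{E}_{22}=\begin{pmatrix}0&a\\0&0\end{pmatrix},
\]
so $\langle[x_1],U a[x_2]\rangle=\sum_{i,j}\Phi\begin{pmatrix}0&a\\0&0\end{pmatrix}_{i,j}=\psi(a)$. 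The contraction is then $T=V_1^*UV_2$, with $V_i$ the partial isometries identifying the given $(F_i,y_i)$ with the minimal GNS modules sitting inside $\hat E_i^{(\B)}$. Your handling of the non-minimality issue and of why the von Neumann hypothesis enters is fine; only the projection step needs to be replaced by this $\mathbb{E}_{12}$-shift.
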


 \begin{proof}
    Let $(E,x)$ be the (minimal) GNS-construction for $\Phi.$ So, $E$ is a von Neumann $M_2(\B)$-module and Hilbert $M_2(\A)$-$M_2(\B)$-module. Let $\mathbb{E}_{ij}:=\1 \otimes E_{ij}$  in $ \mathcal{A}\otimes M_2,$ or $\mathcal{B}\otimes M_2$, depending upon the context,
     where $\{E_{ij}\}$'s are the matrix units in $M_2.$ Set $\hat{E}_i:=\mathbb{E}_{ii} E\se E,i=1,2.$ Then $\hat{E}_i$'s are SOT closed (as $\mathbb{E}_{ii}$'s are projections) $M_2(\mathcal{B})$-submodules of $E$ such that $E=\hat{E}_1\oplus \hat{E}_2.$

    Let  $x_i:=\mathbb{E}_{ii}x\mathbb{E}_{ii}\in \hat{E}_i, i=1,2.$  Clearly $\la x_1,x_2\ra=0.$ Also for $i,j=1,2 $ and $i\ne j,$
    \begin{equation*}
       \norm{x_i-\mathbb{E}_{ii}x}^ 2=\norm{\mathbb{E}_{ii}x\mathbb{E}_{jj}}^2=\norm{\la \mathbb{E}_{ii}x\mathbb{E}_{jj}, \mathbb{E}_{ii}x\mathbb{E}_{jj}\ra}=\norm{\mathbb{E}_{jj}\Phi(\mathbb{E}_{ii})\mathbb{E}_{jj}}=0,
    \end{equation*}
   and \begin{equation*}
       \norm{x_i-x\mathbb{E}_{ii}}^ 2=\norm{\mathbb{E}_{jj}x\mathbb{E}_{ii}}^2=\norm{\la \mathbb{E}_{jj}x\mathbb{E}_{ii}, \mathbb{E}_{jj}x\mathbb{E}_{ii}\ra}=\norm{\mathbb{E}_{ii}\Phi(\mathbb{E}_{jj})\mathbb{E}_{ii}}=0.
       \end{equation*}
    Thus
    \begin{equation}
    x_i=\mathbb{E}_{ii}x=x\mathbb{E}_{ii}, i=1,2  \text{ and  hence }x=(\mathbb{E}_{11}+\mathbb{E}_{22})x=x_1+x_2.\label{eq-main-1}
    \end{equation}
    As $\Phi$ is a block map, for $A\in M_2(\A),$ using \eqref{eq-main-1} we have
    \begin{equation*}
    \Phi(A)=\la x,Ax\ra=\sum_{i,j=1}^2\la x_i,Ax_j\ra=\begin{pmatrix}
    \la x_1,Ax_1\ra_{11}&\la x_1,Ax_2 \ra_{12}\\\la x_2,Ax_1\ra_{21}&\la x_2,Ax_2\ra_{22}
    \end{pmatrix},
    \end{equation*}
    where $\la a,b\ra_{ij}$ denotes the $(i,j)$\textsuperscript{th} entry of $\la a,b\ra\in M_2(\B).$

   Consider the Hilbert $\A$-$\B$-module and von Neumann $\B$-module $E^{(\B)}$ (as described in Remark \ref{obs-new-module}), and consider  the von Neumann $\B$-modules $ \hat{E}_i^{(\B)},i=1,2. $   Observe that $\hat{E}_i$ has a non-degenerate left action of $\A$ given by
   \begin{equation}
   a x:=\begin{pmatrix}
   a &0\\0&a
   \end{pmatrix}x\quad\text{for } a\in\A, x\in \hat{E}_i.
   \end{equation}
    Therefore, Proposition \ref{new-two-sided} shows that  $\hat{E}_i^{(\B)}$ is also a Hilbert $\A$-$\B$-module for $i=1,2.$

    We have  $E^{(\B)}\simeq \hat{E}_1^{(\B)}\oplus \hat{E}_2^{(\B)}$ (via $[y]_E\mapsto[\mathbb{E}_{11}y]_{\hat{E}_1}+ [\mathbb{E}_{22}y]_{\hat{E}_2}$ for $ y\in E$). %as $N=N_1\oplus N_2.$
   For $a\in \mathcal{B}$ and $i=1,2$ see that,
    $$\la [x_i],a[x_i]\ra=\sum_{r,s=1}^2\left\la \mathbb{E}_{ii}x, \begin{pmatrix} a&0\\0&a\end{pmatrix}\mathbb{E}_{ii}x\right\ra_{r,s}=\sum_{r,s=1}^2\Phi\left(\mathbb{E}_{ii}\begin{pmatrix} a&0\\0&a\end{pmatrix}\mathbb{E}_{ii}\right)_{r,s}=\phi_i(a).$$
    This shows that $(\hat{E}_i^{(\B)},[x_i])$ is a GNS-representation (not necessarily minimal) for $\phi_i,i=1,2.$
    Define $U:\hat{E}_2^{(\B)}\to \hat{E}_1^{(\B)}$  by $U[w]=[\mathbb{E}_{12}w]$ for all $w\in\hat{E}_2.$  Then, for all $z,w\in \hat{E}_2,$
    \begin{equation*}
    \la U[z],U[w]\ra=\sum_{i,j=1}^2\la \mathbb{E}_{12}z,\mathbb{E}_{12}w\ra_{i,j}=
    \sum_{i,j=1}^2\la z,\mathbb{E}_{21}\mathbb{E}_{12}w\ra_{i,j}=\sum_{i,j=1}^2\la z, w\ra_{i,j}=\la [z],[w]\ra,
    \end{equation*}
    also  for $y\in \hat{E}_1$ we have $\mathbb{E}_{21}y\in \hat{E}_2$ such that
    \begin{equation*}
    U[\mathbb{E}_{21}y]=[\mathbb{E}_{12}\mathbb{E}_{21}y]=[\mathbb{E}_{11}y]=[y].
    \end{equation*}
    Therefore $U$ is a unitary from the von Neumann $\B$-module $\hat{E}_2^{(\B)}$ to the  von Neumann $\B$-module $\hat{E}_1^{(\B)}.$ Now for $a\in\A,w\in \hat{E}_2, $
    \begin{equation*}
    Ua[w]=U[\begin{pmatrix} a &0\\ 0&a\end{pmatrix}w]=[\mathbb{E}_{12}\begin{pmatrix} a &0\\ 0&a\end{pmatrix}w]=[\begin{pmatrix} a &0\\ 0&a\end{pmatrix}\mathbb{E}_{12}w]=a[\mathbb{E}_{12}w]=aU[w].
    \end{equation*}
    Thus $U:\hat{E}_2^{(\B)}\to \hat{E}_1^{(\B)}$ is a bilinear (adjointable) unitary between the Hilbert $\A$-$\B$ modules.

     Let $\tilde{F_i}:=\overline{\spn}^s \A y_i\B\se F_i$ and       $\tilde{E_i}=\overline{\spn}^s\A [x_i] \B\se \hat{E}_1^{(\B)},$ so that $(\tilde{F_i},y_i)$ and $(\tilde{E_i},[x_i])$ are minimal GNS-representations for $\phi_i,i=1,2.$  Therefore, $\tilde{V}_i:\tilde{F}_i\to \tilde{E}_i$ given by $$\tilde{V_i}(ay_ib)=a[x_i]b,\quad a\in\A,b\in\B,$$  extends to a bilinear (adjointable) unitary.  Let
     $V_i:F_i\to\hat{E}_i^{(\B)}$ be the extension of $\tilde{V_i},$ by defining it to be zero on the complement $\tilde{F_i}^\perp$ of $\tilde{F_i}.$ Note that $V_i$  is a bilinear partial isometry with initial space $\tilde{F_i}$ and final space $\tilde{E_i}$ for $i=1,2.$  Take $T:=V_1^*UV_2.$

%    Let $\tilde{F}_i:=\overline{\spn}^s \A y_i\B\se F_i$ be the minimal GNS-module for $\phi_i.$ For $i=1,2,$ let $\mathcal{I}_i:\tilde{F}_i\to F_i$ be the inclusion map. Then $\mathcal{I}_i$'s are adjointable bilinear contractions.
%    Define $\tilde{V}_i:\tilde{F}_i\to \hat{E}_i$ by $$\tilde{V}_i(ay_ib)=ax_ib$$
%    and extend it to the span (strong)closure.
%    Then $\tilde{V}_i$ is a bilinear adjointable isometry. Now define $V_i:F_i\to \hat{E}_i $ by
%     $V_i=\tilde{V}_i\mathcal{I}_i^*.$  Hence $V_i$ is a bilinear adjointable contraction (Indeed, $V_i$ is the partial
%     isometry with initial space $\tilde{F}_i$ and final space
%     $\tilde{E}_i=\overline{\spn}^s\A [x_i] \B\se \hat{E}_1^{(\B)}$). Now take $T:=V_1^*UV_2.$  Then  as $V_i$'s and $U$  are bilinear adjointable contractions $T$ is a bilinear adjointable contraction.
Now consider, for $a\in\A,$
    \begin{align*}
    \la y_1, Tay_2\ra &=\la y_1, V_1^*UV_2ay_2 \ra=\la V_1 y_1,UV_2ay_2\ra=\la \tilde{V}_1y_1, U\tilde{V}_2 ay_2\ra\\
    &=\la [x_1], Ua[x_2]\ra %=\la \mathbb{E}_{11}x,\mathbb{E}_{12}a\mathbb{E}_{22}x\ra
    =\sum_{i,j=1}^2\left\la \mathbb{E}_{11}x,\mathbb{E}_{12}\begin{pmatrix}
    a &0\\0 &a
    \end{pmatrix}\mathbb{E}_{22}x\right\ra _{i,j}\\
    &=\sum_{i,j=1}^2\left\la x, \begin{pmatrix}
    0& a\\0&0
    \end{pmatrix}x\right\ra_{i,j}=\sum_{i,j=1}^2\Phi\begin{pmatrix}
    0&a\\0& 0
    \end{pmatrix}_{i,j}=\sum_{i,j=1}^2\begin{pmatrix}
    0&\psi(a)\\0& 0
    \end{pmatrix}_{i,j}=\psi(a).
    \end{align*}
    This completes the proof.
 \end{proof}

     \begin{remark}[Uniqueness]\label{uniqueness} With the hypothesis and notations of  Theorem \ref{main-single}
        let $T,T' :F_2\to F_1$ be any two adjointable bilinear contractions such that $\psi (a)=\la y_1 ,Tay_2\ra=\la y_1 ,T'ay_2\ra$ for all $ a\in \A, $ then
        \begin{align*}
        \la a_1y_1 b_1 , T(a_2y_2 b_2) \ra
        %&=\la y_1 b_1 , a_1^*Ta_2y_2 b_2 \ra
        &=b_1^*\la y_1, T((a_1^*a_2)y_2)\ra b_2\\
        &=b_1^*\la y_1, T'((a_1^*a_2)y_2)\ra b_2\\%=b_1^*\la y_1, a_1^*T'a_2y_2\ra b_2\\
        &=\la a_1y_1 b_1 , T'(a_2y_2 b_2) \ra
        \end{align*}  for $a_1,a_2\in \A,b_1,b_2 \in \B$ and hence $P_{\tilde{F}_1}TP_{\tilde{F}_2}=P_{\tilde{F}_1}T'P_{\tilde{F}_2}$ where $P_{\tilde{F}_i}:F_i\to F_i$ is the projection onto $\tilde{F}_i.$ This in particular shows that the contraction $T$ in Theorem \ref{main-single} is \emph{unique} if $F_i$'s are minimal GNS-modules.
     \end{remark}

       \begin{corollary}\label{single-cor}
        Let $\A$ be a unital $C^*$-algebra. For $i=1,2,$ let $\varphi _i:\A\to \BH$ be a completely positive map with the minimal Stinespring representation $(\K_i,\pi_i, V_i).$  Suppose $\Phi: M_2(\A)\to M_2(\BH),$ defined by $\Phi=\begin{pmatrix}
        \phi _1 & \psi\\
        \psi^* & \phi _2
        \end{pmatrix}
        $ is block CP for some CB map $\psi:\A\to \BH,$ then there is a unique contraction $T:\K_2\to \K_1 $ with $\pi_1(a)T=T\pi_2(a)$ for all $a\in \A$ such that $\psi(a)=V_1^*T\pi_2(a)V_2 $ for all $a\in \A$.
     \end{corollary}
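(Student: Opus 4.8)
The plan is to read the statement off Theorem~\ref{main-single} after translating between the Stinespring and GNS pictures by means of Remark~\ref{Stinepring rep}. First I would fix $i\in\{1,2\}$ and regard $\SB(\H,\K_i)$ as a Hilbert $\A$-$\BH$-module, with left $\A$-action through $\pi_i$ and right $\BH$-action by composition of operators; set $F_i:=\overline{\spn}\,\A V_i\BH\se\SB(\H,\K_i)$. By Remark~\ref{Stinepring rep}, $(F_i,V_i)$ is a \emph{minimal} GNS-representation of $\varphi_i$, and the minimal Stinespring representation it induces, namely $(F_i\odot\H,\rho_i,L_{V_i})$, agrees with $(\K_i,\pi_i,V_i)$ by uniqueness of the minimal Stinespring dilation. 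I would therefore identify $F_i\odot\H$ with $\K_i$ once and for all: under this identification $\rho_i$ becomes $\pi_i$, $L_{V_i}$ becomes $V_i$, and, for $x\in F_i$, the operator $L_x\colon\H\to F_i\odot\H$ becomes $x$ itself, viewed as an element of $\SB(\H,\K_i)$; in particular $x\odot h$ is identified with the vector $x(h)\in\K_i$.

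Since $\Phi=\begin{pmatrix}\varphi_1&\psi\\\psi^*&\varphi_2\end{pmatrix}$ is block CP and $\BH$ is a von Neumann algebra, Theorem~\ref{main-single} yields an adjointable bilinear contraction $\mathcal{T}\colon F_2\to F_1$ with $\psi(a)=\la V_1,\mathcal{T}(aV_2)\ra$ for all $a\in\A$, where $aV_2=\pi_2(a)V_2$ is the left action on $F_2$; by Remark~\ref{uniqueness}, $\mathcal{T}$ is the unique such contraction because the $F_i$ are minimal. I would then put $T:=\mathcal{T}\odot\id_\H\colon F_2\odot\H\to F_1\odot\H$, that is, $T\colon\K_2\to\K_1$ after the identification. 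It is a contraction since $T^*T=(\mathcal{T}^*\odot\id_\H)(\mathcal{T}\odot\id_\H)=(\mathcal{T}^*\mathcal{T})\odot\id_\H\le\id_{\K_2}$. Because $\mathcal{T}$ is bilinear it intertwines the left actions $\rho_2$ and $\rho_1$, so $T\pi_2(a)=\pi_1(a)T$ for all $a\in\A$; and, unwinding the identification, for $x\in F_2\se\SB(\H,\K_2)$ and $h\in\H$ one has $T(x\odot h)=\mathcal{T}(x)\odot h$, i.e.\ $T\circ x=\mathcal{T}(x)$ in $\SB(\H,\K_1)$. Hence $\mathcal{T}(aV_2)=\mathcal{T}(\pi_2(a)V_2)=T\pi_2(a)V_2$, and, since the inner product on $F_1\se\SB(\H,\K_1)$ is $\la S,S'\ra=S^*S'$,
\[
\psi(a)=\la V_1,\mathcal{T}(aV_2)\ra=V_1^*\,T\pi_2(a)V_2\qquad(a\in\A).
\]

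For uniqueness, suppose $T'\colon\K_2\to\K_1$ is another contraction with $\pi_1(a)T'=T'\pi_2(a)$ and $\psi(a)=V_1^*T'\pi_2(a)V_2$ for all $a$. Then for $a,b\in\A$ and $h,h'\in\H$ the intertwining relations give
\[
\la\pi_1(a)V_1h',(T-T')\pi_2(b)V_2h\ra=\la h',V_1^*(T-T')\pi_2(a^*b)V_2h\ra=0,
\]
and since $\{\pi_1(a)V_1h'\}$ and $\{\pi_2(b)V_2h\}$ span dense subspaces of $\K_1$ and $\K_2$ respectively (minimality of the Stinespring representations), $T=T'$. The one place calling for care is the bookkeeping in the first paragraph: keeping the identifications $F_i\odot\H\cong\K_i$ and $L_x\leftrightarrow x$ consistent so that bilinearity of $\mathcal{T}$ passes to the intertwining property of $T$ and the module inner product passes to $V_1^*(\,\cdot\,)V_2$. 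Once that dictionary is in place the argument is a direct translation, needing no estimate beyond $\norm{\mathcal{T}\odot\id_\H}\le\norm{\mathcal{T}}$.
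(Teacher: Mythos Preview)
Your proof is correct and follows essentially the same route as the paper: pass from the Stinespring data to minimal GNS-representations via Remark~\ref{Stinepring rep}, invoke Theorem~\ref{main-single} to obtain the module contraction, and then transport it back to a Hilbert-space operator intertwining $\pi_1,\pi_2$. The only cosmetic difference is that you define $T:=\mathcal{T}\odot\id_\H$ directly, whereas the paper defines $T$ pointwise by $T(\pi_2(a)V_2h):=\hat{T}(\pi_2(a)V_2)h$ and verifies well-definedness via a rank-one argument; the uniqueness computations are identical.
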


      \begin{proof}
      	Given that $(\K_i,\pi_i, V_i)$ is a minimal Stinespring representation for $\phi_i,i=1,2.$ Let $(E_i,V_i)$ be the minimal GNS-representation for $\phi_i, i=1,2$ as explained in Remark \ref{Stinepring rep}, where $E_i=\overline{\spn}^s\pi_i(\A) V_i\BH \se \SB(\H,\K_i).$ By Theorem \ref{main-single}, there exists an adjointable bilinear contraction $\hat{T}:E_2\to E_1$ such that
        \begin{equation}\label{psi}
        \psi(a)=\la V_1, \hat{T}\pi_2(a)V_2\ra=V_1^*\hat{T}\pi_2(a)V_2 \quad\text{for all }a\in\A.
        \end{equation}

         As $(\K_i,\pi_i, V_i)$ is the minimal Stinespring representation for $\phi,$  we have  $\K_i=\overline{\pi_i(\A)V_i\H}.$ Define $T:\K_2\to \K_1 $ by
         \begin{equation}
         T(\pi_2(a)V_2h)=(\hat{T}(\pi_2(a)V_2))h\quad\text{for all }a\in\A,h\in\H.
         \end{equation}
          Let $h$ be a non-zero vector in $\H.$ As $\hat{T}$  is  right $\BH$-linear and  contraction,  we have for  $a\in\A, h\in \H,$
         \begin{align*}
         \norm{\ketbra{(\hat{T}(\pi_2(a)V_2))h}{h}}=\norm{\hat{T}(\pi_2(a)V_2)\ketbra{h}{h}}&=\norm{\hat{T}(\pi_2(a)V_2\ketbra{h}{h})}\\&\le \norm{\pi_2(a)V_2\ketbra{h}{h}}=\norm{\ketbra{(\pi_2(a)V_2)h}{h}}.
         \end{align*}
         This implies
         \begin{equation}
         \norm{(\hat{T}(\pi_2(a)V_2))h} \le \norm{\pi_2(a)V_2h}\quad \text{for all } a\in\A,  h\in \H.
         \end{equation}
         Therefore  $T$ is a well-defined  contraction.  Now as $\hat{T}$ is left $\A$-linear, for all $a,b\in\A$ and $h\in\H,$ we have
         \begin{align*}
         T\pi_2(a)(\pi_2(b)V_2h)=T(\pi_2(ab)V_2h)&=\hat{T}(\pi_2(ab)V_2)h=\hat{T}(\pi_2(a)\pi_2(b)V_2)h\\&=\pi_1(a)\hat{T}(\pi_2(b)V_2)h=\pi_1(a)T(\pi_2(b)V_2h).
         \end{align*}
          Thus $T\pi_2(a)=\pi_1(a)T,$ for all $a\in \A.$  Now \eqref{psi} shows that   $\psi(a)h=V_1^*T\pi_2(a)V_2h$ for all $h\in\H.$ For the uniqueness of $T,$ let $T'$ be another contraction such that $T'\pi_2(a)=\pi_1(a)T'$ and $\psi(a)=V_1^*T'\pi_2(a)V_2$ for all $a\in\A.$ Consider for $a,b\in\A$ and $ h,g\in \H,$
          \begin{align*}
          \la T\pi_2(b)V_2g,\pi_1(a)V_1h\ra&= \la V_1^*T\pi_2(a^*b)V_2g,h\ra\\
          &=\la\psi(a^*b)g,h\ra=\la V_1^*T'\pi_2(a^*b)V_2g,h\ra\\
          &= \la T'\pi_2(b)V_2g,\pi_1(a)V_1h\ra.
          \end{align*}
          This proves the uniqueness of $T.$
       \end{proof}

      \begin{remark}\label{rem-Furuta}
      	(i). Corollary \ref{single-cor} can be proved directly (without deducing from Theorem \ref{main-single}).
      	
        \noindent(ii). Given two CP maps $\phi_i:\A \to\BH, i=1,2. $ Let $(\K_i,\pi_i, V_i)$ be  the  minimal Stinespring representation for $\phi_i , i=1,2.$   Suppose the block map $\Phi=\begin{pmatrix}
     \phi _1 & \psi\\
     \psi^* & \phi _2
     \end{pmatrix}$ is CP for some CB map $\psi:\A\to\BH.$ Then  Furuta in \cite[Proposition 6.1]{KF} proved that: $\psi$ is non-trivial (non-zero) if and only if there exists a non-zero operator $T:\K_2\to\K_1$ such that $T\pi_1(a)=\pi_2(a)T$ for all $a\in \A.$ On the other hand, Corollary \ref{single-cor} explicitly tells us the structure of $\psi$ from the minimal Stinespring representations of  $\phi_i$'s.

     \noindent(iii). Corollary \ref{single-cor} is a generalization of \cite[Corollary 2.7]{PS} (namely, when  $\phi_1=\phi_2$ in  Corollary \ref{single-cor},  we get the result of Paulsen and Suen \cite[Corollary 2.7]{PS}).
    \end{remark}
The following example shows that we cannot replace the von Neumann algebra $\B$ in Theorem \ref{main-single} by an arbitrary $C^*$-algebra.

 \begin{example}\label{eg-corner}
    Let $\A= \B=C([0,1]),$ the commutative unital $C^*$-algebra of continuous functions on $[0,1].$  Let $E=C([0,1]).$ It is a Hilbert $\A$-$\B$-module with the natural actions  and standard inner product: $\langle f, g\rangle = f^*g.$    Let
    \begin{equation*}
    h_1(t)=t,\quad h_2(t)=1 \quad\text{ for }  t\in [0,1].
    \end{equation*}
    Consider the CP map  $\Phi:M_2(\A)\to M_2(\B)$ defined by
    \[\Phi\begin{pmatrix}
    f_{11}&f_{12}\\f_{21}&f_{22}
    \end{pmatrix}=\begin{pmatrix}
    h_1^*&0\\0&h_2^*
    \end{pmatrix}\begin{pmatrix}
    f_{11}&f_{12}\\f_{21}&f_{22}
    \end{pmatrix}\begin{pmatrix}
    h_1&0\\0&h_2
    \end{pmatrix}=\begin{pmatrix}
    h_1^*f_{11}h_1 & h_1^*f_{12}h_2\\  h_2^*f_{21}h_1 &h_2^*f_{22}h_2
    \end{pmatrix}.\]
    Note that $\Phi$ is the block CP map $\begin{pmatrix}
    \phi_1& \psi\\ \psi^*&\phi_2
    \end{pmatrix},$ where $\phi_i,\psi:\A\to \B$ are given by
    \begin{equation}
    \psi(f)=\la h_1,fh_2\ra \text{ and }\phi_i (f)=\la h_i,fh_i\ra\quad\text{for  } f\in\A, i=1,2.
    \end{equation}

    Therefore, $(E,h_i)$ is a GNS-representation for $\phi_i,i=1,2.$ Let  $E_i=\overline{\spn}~\A h_i\B\se E.$ Then $(E_i,h_i)$ is the minimal GNS-representation for $\phi_i,i=1,2.$ Note that
    $$E_1=\{f\in C([0,1]): f(0)=0\} \quad \text{ and } \quad E_2=\A.$$

    Now suppose that there exists a bilinear contraction $T:E_2\to E_1$ such that $\psi(f)=\la h_1, Tfh_2\ra $ for all $f\in \A.$ Then $\psi(h_2)=h_1=h_1T(h_2).$ That is, $t=tT(h_2)(t)$ for all $t\in[0,1].$ This implies that  $T(h_2)(t)=1$ for all $t\ne 0.$ This is a
     contradiction to $T(h_2)\in E_1.$
\end{example}

\begin{remark}
    Let $\A$ and $\B$ be unital $C^*$-algebras and let $\Phi:M_2(\A)\to M_2(\B)$ be a block CP map $\Phi=\begin{pmatrix}
    \phi _1 & \psi\\
    \psi^* & \phi _2
    \end{pmatrix}.$ Suppose $\B$ is a unital subalgebra of $\BH$ for some Hilbert space $\H.$ Let $\CC$ be the von Neumann algebra $\overline{\B}^s.$ Now enlarge the codomain of $\Phi$ to $M_2(\CC).$ That is, consider the block CP map  $\tilde{\Phi}:M_2(\A)\to M_2(\CC),$ such that $\tilde{\Phi}(A)=\Phi(A).$

    Let $\tilde{\Phi}=\begin{pmatrix}
    \tilde{\phi} _1 & \tilde{\psi}\\
    \tilde{\psi}^* & \tilde{\phi} _2
    \end{pmatrix}.$ Then,  by Theorem \ref{main-single} we get a bilinear contraction
     $\tilde{T}:\tilde{E}_2 \to \tilde{E}_1$ such that $\psi(a)=\tilde{\psi}(a)=\la x_1, \tilde{T}ax_2\ra$ for
     all $a\in\A,$ where $(\tilde{E}_i,x_i)$ is the GNS-construction for
     $\tilde{\phi}_i,i=1,2.$ Note that $(\tilde{E}_i,x_i)$ is not a GNS-representation
      for $\phi_i, i=1,2$ as $\tilde{E}_i$ is an Hilbert
      $\A$-$\CC$-module which need not be an Hilbert $\A$-$\B$
      module.

    In particular, in  Example \ref{eg-corner} if we enlarge the codomain $\B$ to
     $\CC=\overline{\B}^s=L^\infty([0,1]),$ then with the above notations, we have
     $\tilde{E}_i=\overline{\spn}~\A h_i\CC=L^\infty([0,1]),i=1,2.$ Note also that there exists
     a bilinear contraction $\tilde{T}:\tilde{E}_2 \to \tilde{E}_1$ given by $\tilde{T}f=f, f\in E_2$ such
     that $\psi(f)=\tilde{\psi}(f)=\la h_1,Tfh_2\ra$ for all $f\in \A.$
\end{remark}

The following example is a modification of Example \ref{eg-corner} to get
 an example of a unital block CP map $\Phi:M_2(\B)\to \M_2(\B).$

\begin{example}
    Let $\A$ be the unital $C^*$-algebra $C([0,1]).$ Let $\B=\A\oplus \A$ and let $ F=\A\oplus \A$ be the
     Hilbert $\B$-$\A$-module with the module actions and inner product  given by
    $$\begin{pmatrix}
    f_1\\f_2
    \end{pmatrix}k=\begin{pmatrix}
    f_1k\\f_2k
    \end{pmatrix}, \begin{pmatrix}
    k_1\\k_2
    \end{pmatrix}\begin{pmatrix}
    f_1\\f_2
    \end{pmatrix}=\begin{pmatrix}
    k_1f_1\\k_2f_2
    \end{pmatrix}\text{ and } \left\la \begin{pmatrix}
    f_1\\f_2
    \end{pmatrix},\begin{pmatrix}
    g_1\\g_2
    \end{pmatrix}\right\ra=f_1^*g_1+f_2^*g_2$$
    for $k\in \A,\begin{pmatrix}
    k_1\\k_2
    \end{pmatrix}\in \B,  \begin{pmatrix}
        f_1\\f_2
    \end{pmatrix},\begin{pmatrix}
    g_1\\g_2
    \end{pmatrix}\in F.$
Consider $E=F\oplus F$ as a Hilbert $\mathcal{B}$-$\B$-module with right action
$$ \begin{pmatrix}  x\\y\end{pmatrix} f=\begin{pmatrix}xf_1\\y  f_2\end{pmatrix} ~\text{ where } f=\begin{pmatrix}  f_1\\f_2\end{pmatrix}\in \B,\begin{pmatrix} x\\y\end{pmatrix}\in E,$$
inner product
$$ \left\la \begin{pmatrix}x_1\\x_2 \end{pmatrix},\begin{pmatrix}y_1\\y_2\end{pmatrix}\right\ra=\begin{pmatrix} \la x_1, y_1\ra\\\la x_2, y_2\ra    \end{pmatrix},$$
and the left action
$$f \begin{pmatrix} x\\y\end{pmatrix} = \begin{pmatrix} f x\\f y\end{pmatrix} \quad\text{for } f\in \B, \begin{pmatrix}
x\\y
\end{pmatrix}\in E.$$

Let $h_{11}(t)=t,h_{12}(t)=\sqrt{1-t^2}, h_{21}(t)=1, h_{22}(t)=0$ for $t\in[0,1].$ Let
$$h_1=\begin{pmatrix}   h_{11}  \\ h_{12}   \end{pmatrix}\oplus \begin{pmatrix}h_{11}   \\ h_{12}   \end{pmatrix}, h_2=\begin{pmatrix}  h_{21}\\h_{22}  \end{pmatrix} \oplus\begin{pmatrix} h_{21}\\h_{22}  \end{pmatrix}\in E=F\oplus F $$

Let $\Phi: M_2(\B)\rightarrow M_2(\B)$ be the block CP map $\Phi=\begin{pmatrix}
\phi_1&\psi\\\psi^*&\phi_2
\end{pmatrix}, $
where $\phi_i , \psi: \B\to \B$ are defined by
$$\phi_i (f)=\la h_i,f h_i\ra\text{ and } \psi(f)=\la h_1,fh_2\ra\quad \text{for } f\in\B,i=1,2. $$

Let $E_i=\overline{\spn}\B h_i\B\se E,i=1,2.$ Then $E_1=F_1\oplus
F_1$ with $F_1=C_0([0,1])\oplus C_1([0,1])$
 where
$C_j([0,1])=\{f\in C([0,1]): f(j)=0\}$ for $j=0,1,$ and
$E_2=F_2\oplus F_2$ with $F_2=C([0,1])\oplus 0.$ Now suppose there
exists a bilinear contraction $T:E_2\rightarrow E_1$ such that
$\psi(f)=\la h_1,Tfh_2\ra$ for all $f\in \B.$ Then for
$f=\begin{pmatrix}
    h_{21}\\h_{22}
    \end{pmatrix}\in \B,$  $$\begin{pmatrix}
    h_{11}\\h_{22}
    \end{pmatrix}=\la h_1,fh_2\ra=\psi(f)=\la h_1,fTh_2\ra=\left\la\begin{pmatrix}  h_{11}  \\ h_{12}   \end{pmatrix}\oplus \begin{pmatrix}h_{11}   \\ h_{12}   \end{pmatrix}, \begin{pmatrix}
    l_{11}\\h_{22}
    \end{pmatrix}\oplus \begin{pmatrix}
    l_{21}\\h_{22}
    \end{pmatrix}\right\ra$$
    where $Th_2=\begin{pmatrix}
    l_{11}\\l_{12}
    \end{pmatrix}\oplus \begin{pmatrix}
    l_{21}\\l_{22}
    \end{pmatrix}\in E_1.$ Therefore $h_{11}=h_{11}l_{11}+h_{12}h_{22}.$ Hence $t=tl_{11}(t)$ for all $t\in [0,1].$ Hence $l_{11}(t)=1$ for $t\ne0.$ This is a contradiction
    to the assumption that $Th_2\in E_1.$ So no such $T$ exists.
\end{example}

We could not get any reasonable answer to the following question.

 \begin{problem}
    Let $\A,\B$ be unital $C^*$-algebras and let $p\in\A,q\in \B$ be projections. Let $\Phi=\begin{pmatrix}
    \phi_1&\psi\\\psi^*&\phi_2
    \end{pmatrix}$ be a block CP map  from $\A$ to $\B$
    with respect to $p$ and $q.$ Let $(E_i,\xi_i)$ be GNS-representation  of  $\phi_i,i=1,2.$  Can we prove a
    theorem similar to Theorem \ref{main-single}? In other words what is the structure of $\psi$ in terms of $(E_i,\xi_i)?$
\end{problem}

     \section{Semigroups of block CP maps}\label{sec-CP-semi}

    \subsection{Structure of block quantum dynamical semigroups}\label{subsec-1}
   In this section, we shall prove a structure theorem similar to (or using) Theorem \ref{main-single} for semigroups
    of block CP maps. We shall start with a few basic examples of semigroups of block CP
    maps, which are of interest.

    \begin{example}
        Let $\H$ be a Hilbert space. Let $(\theta_t)_{t\ge 0}$
        be an $E_0$-semigroup on $\BH.$ Let $(U_t)_{t\ge 0}$ be
        a family of unitaries in $\BH$ forming left cocycle for $\theta $, that is,
        $U_0=I, U_{s+t}= U_s\theta _s(U_t)$, $t\mapsto U_t$ continuous in SOT.
         Let $\psi_t(X)=U_t\theta_t(X)U_t^*$ for $ X\in \BH.$ Then $(\psi_t)_{t\ge 0}$ is an $E_0$-semigroup, cocycle conjugate to $(\theta_t)_{t\ge 0}.$ Define $\tau_t:\SB(\H\oplus \H)\to \SB(\H\oplus \H)$ by $$\tau_t\begin{pmatrix}
        X & Y\\ Z& W
        \end{pmatrix}=\begin{pmatrix}
        I &0\\ 0 & U_t
        \end{pmatrix} \begin{pmatrix}
        \theta_t(X) & \theta_t(Y)\\ \theta_t(Z) &\theta_t(W)
        \end{pmatrix}\begin{pmatrix}
        I &0\\ 0 & U_t^*
        \end{pmatrix}=\begin{pmatrix}
        \theta_t(X) &\theta_t(Y)U_t^* \\ U_t\theta_t(Z) & U_t\theta_t(W)U_t^*
        \end{pmatrix}.$$
        Then clearly $(\tau _t)_{t\ge 0}$ is a block $E_0$-semigroup.
    \end{example}

    \begin{example}
        Let $(a_t)_{t\ge 0}$ and $(b_t)_{t\ge 0}$ be semigroups on a $C^*$-algebra $\B$ and let $(\phi_t^i)_{t\ge 0}, i=1,2,$ be two QDS on $\B$ such that $\phi_t^1(\cdot)-a_t(\cdot)a_t^*$ and $\phi_t^2(\cdot)-b_t(\cdot)b_t^*$ are CP maps for  all $t\ge0.$ Define $\tau_t:M_2(\B)\to M_2(\B)$ by
        $$\tau_t\begin{pmatrix}
        a &b \\c&d
        \end{pmatrix}=\begin{pmatrix}
        \phi_t^1(a)&a_tbb_t^*\\b_tca_t^*&\phi_t^2(d)
        \end{pmatrix}.$$
        Then $\tau_t$ is CP, for all $t\ge 0,$  as
        $$\tau_t\begin{pmatrix}
        a &b \\c&d
        \end{pmatrix}=\begin{pmatrix}
        a_t& 0\\0& b_t
        \end{pmatrix}\begin{pmatrix}
        a& b\\c& d
        \end{pmatrix}\begin{pmatrix}
        a_t^*& 0\\0& b_t^*
        \end{pmatrix}+\begin{pmatrix}
        \phi_t^1(a)-a_taa_t^*& 0\\0& \phi_t^2(d)-b_tdb_t^*
        \end{pmatrix}.$$
        Clearly $(\tau_t)_{t\ge 0}$ is a block QDS.
    \end{example}

     \begin{lemma}\label{lem-easy-side}
        Let $\B$ be a unital  $C^*$-algebra. Given two inclusion systems $(E^i,\beta^i,\xi^{\odot i})$ associated to a pair of CP
        semigroups $\phi^i=(\phi_t^i)_{t\ge 0}, i=1,2$ on $\B$ and a contractive  morphism $T =(T_t): E^2 \to E^1,$ there is a block CP semigroup  $\Phi=(\Phi_t)_{t\ge 0}$ on $M_2(\B)$ such that $\Phi_t= \begin{pmatrix}
        \phi_t^1 & \psi_t\\\psi_t^* & \phi_t^2
        \end{pmatrix}$ and $ \psi_t(a)=\la \xi_t^1,T_t(a\xi_t^2)\ra.$
     \end{lemma}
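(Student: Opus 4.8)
The plan is to construct the off-diagonal family $\psi = (\psi_t)_{t\ge 0}$ directly from the morphism $T$ and verify that each $\Phi_t$ is CP and that $(\Phi_t)$ is a semigroup. First I would define $\psi_t : \B \to \B$ by $\psi_t(a) = \la \xi_t^1, T_t(a\xi_t^2)\ra$ for $a \in \B$, and set $\Phi_t = \begin{pmatrix}\phi_t^1 & \psi_t\\ \psi_t^* & \phi_t^2\end{pmatrix}$. That each $\Phi_t$ is a well-defined CP map from $M_2(\B)$ to $M_2(\B)$ is immediate from Lemma \ref{lem-single}, applied with $\A = \B$, $\phi_i = \phi_t^i$, GNS-representations $(E_t^i, \xi_t^i)$, and the adjointable bilinear contraction $T_t : E_t^2 \to E_t^1$; this also shows $\psi_t^* = (\psi_t)^*$ in the sense that the lower-left corner of $\Phi_t$ is indeed $\psi_t^*$. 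The normalization $\Phi_0 = \id$ is clear since $E_0^i = \B$, $\xi_0^i = \1$, and $T_0 = \id_\B$ (a contractive morphism must have $T_0 = \id$).

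The main work is the semigroup identity $\Phi_{s+t} = \Phi_s \circ \Phi_t$. Since the diagonal entries already satisfy $\phi_{s+t}^i = \phi_s^i \circ \phi_t^i$ (they are CP semigroups), and since composing block maps of this form multiplies out block-wise, it suffices to check the $(1,2)$-entry, namely
\begin{equation}
\psi_{s+t}(a) = \phi_s^1 \circ \psi_t(a) \;+\; \psi_s \circ \phi_t^2(a) \qquad \text{wait---}
\end{equation}
actually the correct composition of the $2\times 2$ block maps gives $(\Phi_s \circ \Phi_t)_{12}(a) = \phi_s^1(\psi_t(a)) $ is not right either; one must be careful, since $\Phi_t$ acts on the whole matrix. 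Let me instead argue via the module picture. The cleanest route: recall from the definition of morphism that $T_{s+t} = \gamma^1_{s,t}{}^* (T_s \odot T_t)\beta^2_{s,t}$, where $\beta^i = \beta^{i}_{s,t}: E^i_{s+t} \to E^i_s \odot E^i_t$ are the structure maps sending $\xi^i_{s+t} \mapsto \xi^i_s \odot \xi^i_t$. Then for $a \in \B$,
\begin{align*}
\psi_{s+t}(a) &= \la \xi^1_{s+t}, T_{s+t}(a\xi^2_{s+t})\ra = \la \xi^1_{s+t}, \gamma^1_{s,t}{}^*(T_s\odot T_t)\beta^2_{s,t}(a\xi^2_{s+t})\ra\\
&= \la \beta^1_{s,t}\xi^1_{s+t}, (T_s\odot T_t)(a\xi^2_s \odot \xi^2_t)\ra = \la \xi^1_s \odot \xi^1_t, (T_s a\xi^2_s)\odot(T_t\xi^2_t)\ra\\
&= \la \xi^1_t, \la \xi^1_s, T_s(a\xi^2_s)\ra\, T_t\xi^2_t\ra = \la \xi^1_t, \psi_s(a)\, T_t\xi^2_t\ra = \psi_t(\psi_s(a)),
\end{align*}
using that $\beta^1_{s,t}$ is an isometry intertwining with $\gamma^1_{s,t} = \beta^1_{s,t}$, that $T_s, T_t$ are bilinear, and the definition of the interior tensor product inner product. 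Here I have used $\psi_t(b) = \la \xi^1_t, T_t(b\xi^2_t)\ra$ with $b = \psi_s(a) \in \B$. So $\psi_{s+t} = \psi_t \circ \psi_s$, and one checks this is exactly the $(1,2)$-entry of $\Phi_s\circ\Phi_t$ once the block multiplication is written out honestly (the cross terms involving $\phi^i$ and $\psi^*$ cancel or combine correctly because $\Phi_t$ sends the block decomposition of $M_2(\B)$ into itself).

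I expect the main obstacle to be purely bookkeeping: carefully expanding $(\Phi_s\circ\Phi_t)\!\begin{pmatrix}a&b\\c&d\end{pmatrix}$ and matching each of the four entries. For the $(1,1)$ and $(2,2)$ entries this is just the semigroup property of $\phi^i$; for $(1,2)$ and $(2,1)$ one needs the computation above together with $\psi_t^* $ behaving correctly under composition, which follows by applying the $*$-operation $\psi^*(a) = \psi(a^*)^*$ and using that $T_t$ adjointable bilinear gives $\la \xi^1_t, T_t(a\xi^2_t)\ra^* = \la T_t(a^*\xi^2_t), \xi^1_t\ra = \la a^*\xi^2_t, T_t^*\xi^1_t\ra$ etc. Finally, $\Phi_t(\1) \le \1$ follows since $\Phi_t$ is CP and unital-dominated: indeed $\Phi_t\begin{pmatrix}\1&0\\0&\1\end{pmatrix} = \begin{pmatrix}\phi^1_t(\1)&0\\0&\phi^2_t(\1)\end{pmatrix} \le \1$ because each $\phi^i$ is a QDS. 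Continuity of $t\mapsto \Phi_t(X)$, if required, follows from continuity of the $\phi^i_t$ and of $t\mapsto T_t$ in the appropriate topology, but the statement as phrased only asks for the algebraic semigroup structure, so I would not belabor this point.
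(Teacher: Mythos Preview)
Your approach is correct and matches the paper's: invoke Lemma~\ref{lem-single} for complete positivity of each $\Phi_t$, then reduce the semigroup property of $(\Phi_t)$ to showing that $(\psi_t)_{t\ge 0}$ is itself a semigroup, which you verify via the morphism identity $T_{s+t}=(\beta^1_{s,t})^*(T_s\odot T_t)\beta^2_{s,t}$. Your module computation yielding $\psi_{s+t}=\psi_t\circ\psi_s$ is exactly what is needed (the paper leaves this step as ``easy to see'').

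The only point to clean up is your hesitation about block composition. A block map $\Phi_t$ acts \emph{entrywise} on the $2\times 2$ decomposition---that is the definition: $\Phi_t\!\begin{pmatrix}a&b\\c&d\end{pmatrix}=\begin{pmatrix}\phi_t^1(a)&\psi_t(b)\\\psi_t^*(c)&\phi_t^2(d)\end{pmatrix}$. There is no matrix multiplication here and hence no cross terms to ``cancel or combine''; applying $\Phi_s$ to this output immediately gives $(\Phi_s\circ\Phi_t)_{12}(b)=\psi_s(\psi_t(b))$. Once this is clear, your displayed equation with the ``wait---'' and the sentence following it should simply be deleted.
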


     \begin{proof}
        Define  $\Phi_t:M_2(\B)\to M_2(\B)$  as the block maps
        $\Phi_t= \begin{pmatrix}
        \phi_t^1 & \psi_t\\\psi_t^* & \phi_t^2
        \end{pmatrix},$ where $ \psi_t(b)=\la \xi_t^1,T_t(b\xi_t^2)\ra.$ Then, as $T_t:E_t^2\to E_t^1$ is an adjointable  bilinear contraction, $\Phi_t$ is CP for all $t\ge 0$ (see the proof of Lemma \ref{lem-single}).
%        We shall first prove that $\Phi_t$ is CP for all $t\ge 0.$
%        Set $\eta _t:=T_t\xi_t^2\in E_t^1.$ Then $$\Phi_t \begin{pmatrix}
%        a &b\\c&d
%        \end{pmatrix}
%        = \begin{pmatrix}
%        \la \xi_t^1,a \xi_t^1 \ra&\la \xi_t^1,b\eta _t \ra\\\la \eta _t, c\xi_t^1\ra&\la \eta _t, d\eta _t\ra
%        \end{pmatrix} +\begin{pmatrix}
%        0&0\\0&\la \xi_t^2, d(\id_{E_t^2}-{T_t}^*T_t)\xi_t^2\ra
%        \end{pmatrix}.$$
%
%        Clearly $\begin{pmatrix}
%        a &b\\c&d
%        \end{pmatrix}\mapsto \begin{pmatrix}
%        \la \xi_t^1,a \xi_t^1 \ra&\la \xi_t^1,b\eta _t \ra\\\la \eta _t, c\xi_t^1\ra&\la \eta _t, d\eta _t\ra
%        \end{pmatrix}
%        $ is CP. Since $T_t$ is a bilinear contraction, $(\id_{E_t^2}-{T_t}^*T_t)$ is bilinear and positive. Hence  $\begin{pmatrix}
%        a &b\\c&d
%        \end{pmatrix}\mapsto\begin{pmatrix}
%        0&0\\0&\la \xi_t^2, d(\id_{E_t^2}-{T_t}^*T_t)\xi_t^2\ra
%        \end{pmatrix}$ is CP. Therefore, $\Phi_t$ is CP for all $t\ge 0.$
        Now we shall show that $\Phi=(\Phi_t)_{t\ge 0}$ is a semigroup on $M_2(\B).$ We have
        $$
        \Phi_s\circ \Phi_t\begin{pmatrix}
        a &b\\c&d
        \end{pmatrix}=\Phi_s\begin{pmatrix}
        \phi_t^1 (a)& \psi_t(b)\\\psi_t^* (c)& \phi_t^2(d)
        \end{pmatrix}
        %=\begin{pmatrix}\phi_s^1(\phi_t^1 (a))& \psi_s(\psi_t(b))\\\psi_s^*(\psi_t^* (c))& \phi_t^2(\phi_t^2(d))\end{pmatrix}
        =\begin{pmatrix}
        \phi_{s+t}^1 (a)& \psi_s(\psi_t(b))\\\psi_s^*(\psi_t^* (c))& \phi_{s+t}^2(d)
        \end{pmatrix}.
        $$
        It is clear from this, that to show  $\Phi$ is a semigroup, it is enough to show that $(\psi_t)_{t\ge 0}$ is a semigroup. Now as $T$ is a morphism it is easy to see that $(\psi_t)_{t\ge 0}$ is a semigroup.
     \end{proof}

        In the following we have the converse of Lemma \ref{lem-easy-side},  when $\B$ is a von Neumann algebra. Example \ref{eg-corner} says that we cannot take $\B$ as an arbitrary $C^*$-algebra.

     \begin{theorem}\label{main-semigroup}
        Let $\B$ be a von Neumann algebra. Let $\Phi=(\Phi_t)_{t\ge 0}$ be a semigroup of block normal CP maps on $M_2(\B)$ with  $\Phi_t=\begin{pmatrix}  \phi_t^1 & \psi_t\\\psi_t^* & \phi_t^2
        \end{pmatrix}.$  Then, there are inclusion systems $(E^i, \beta^i, \xi^{\odot i}), i=1,2$ associated to $\phi^i$ (canonically arising  from the inclusion system associated to $\Phi$) and a \emph{unique} contractive (weak) morphism $T=(T_t):E^2 \to E^1$ such that $ \psi_t(a)=\la \xi_t^1,T_ta\xi_t^2\ra$ for all $a\in \B, t\ge 0.$
     \end{theorem}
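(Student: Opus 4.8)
The plan is to reduce the statement to the single-time Theorem~\ref{main-single} together with its uniqueness clause (Remark~\ref{uniqueness}); once the contractions $T_t$ are produced one time at a time, the morphism identity will be \emph{forced} by uniqueness, so that the semigroup calculation never has to be carried out by unwinding $(\beta^1_{s,t})^*(T_s\odot T_t)\beta^2_{s,t}$ directly.

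First I would fix the inclusion system $(\E,\beta,x^\odot)$ of von Neumann $M_2(\B)$-$M_2(\B)$-modules associated to $\Phi$ (Remark~\ref{eg-incl-sys}, Definition~\ref{conven}): $\E_t$ is the strongly closed GNS-module of the normal CP map $\Phi_t$, $x_t$ its cyclic unit, and $\beta_{s,t}(x_{s+t})=x_s\odot x_t$. For each $t$ the diagonal $\phi^i_t$ is a compression of $\Phi_t$, hence a normal CP map on $\B$; carrying out the corner construction of Section~\ref{sec-blk-CP} inside $\E_t$ (the projections $\mathbb{E}_{ii}$, the submodules $\mathbb{E}_{ii}\E_t$, and the compressions $(\cdot)^{(\B)}$ of Propositions~\ref{new module} and~\ref{new-two-sided}) produces, exactly as in the proof of Theorem~\ref{main-single}, a minimal GNS-module $(E^i_t,\xi^i_t)$ for $\phi^i_t$ sitting canonically inside $\E_t$; by Remarks~\ref{eg-incl-sys} and~\ref{comp} the families $E^i=(E^i_t)$, with structure maps $\beta^i_{s,t}$ determined by $\xi^i_{s+t}\mapsto\xi^i_s\odot\xi^i_t$, are the inclusion systems $(E^i,\beta^i,\xi^{\odot i})$ associated to $\phi^i$, now realized inside the inclusion system of $\Phi$. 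Applying Theorem~\ref{main-single} to the block CP map $\Phi_t$ with these GNS-representations yields an adjointable bilinear contraction $T_t\colon E^2_t\to E^1_t$ with $\psi_t(a)=\la\xi^1_t,T_t a\xi^2_t\ra$ for all $a\in\B$, and since the $E^i_t$ are minimal, $T_t$ is the \emph{unique} such contraction (Remark~\ref{uniqueness}).

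Next I would check that $T=(T_t)_{t\ge0}$ is a contractive weak morphism. Each $E^i_t$ is a von Neumann $\B$-$\B$-module (Proposition~\ref{prop-vNm-1}), so $E^i_s\odot E^i_t$ is again a von Neumann module (Proposition~\ref{prop-vNm-3}, Definition~\ref{conven}), every $\beta^i_{s,t}$ is adjointable, and $T_s\odot T_t$ is adjointable bilinear of norm $\le\norm{T_s}\,\norm{T_t}\le1$; hence $\widetilde{T}_{s+t}:=(\beta^1_{s,t})^*(T_s\odot T_t)\beta^2_{s,t}\colon E^2_{s+t}\to E^1_{s+t}$ is an adjointable bilinear contraction. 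Using that $\beta^i_{s,t}$ is a bilinear isometry taking $\xi^i_{s+t}$ to $\xi^i_s\odot\xi^i_t$, the inner-product formula on an interior tensor product, the left $\B$-linearity of $T_t$, and the representing identities for $T_s,T_t$, I would compute for $a\in\B$ that
\[
\la\xi^1_{s+t},\,\widetilde{T}_{s+t}\,a\xi^2_{s+t}\ra=\la\xi^1_s\odot\xi^1_t,\;T_s(a\xi^2_s)\odot T_t\xi^2_t\ra=\la\xi^1_t,\;\psi_s(a)\,T_t\xi^2_t\ra=\psi_t\bigl(\psi_s(a)\bigr).
\]
Since $\Phi_{t+s}=\Phi_t\circ\Phi_s$ forces $\psi_{s+t}=\psi_{t+s}=\psi_t\circ\psi_s$ (read off the $(1,2)$-corner of $\Phi_t\circ\Phi_s$), the right-hand side equals $\psi_{s+t}(a)=\la\xi^1_{s+t},T_{s+t}\,a\xi^2_{s+t}\ra$; thus $\widetilde{T}_{s+t}$ and $T_{s+t}$ represent $\psi_{s+t}$ in the same way relative to the \emph{minimal} GNS-representation $(E^i_{s+t},\xi^i_{s+t})$, and Remark~\ref{uniqueness} gives $\widetilde{T}_{s+t}=T_{s+t}$, i.e. $T_{s+t}=(\beta^1_{s,t})^*(T_s\odot T_t)\beta^2_{s,t}$. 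Together with $\norm{T_t}\le1$ (so the norm bound in the morphism axiom holds with $k=0$) this says $T$ is a contractive weak morphism from $E^2$ to $E^1$.

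Uniqueness of the morphism is then immediate: if $S=(S_t)$ is any contractive weak morphism $E^2\to E^1$ with $\psi_t(a)=\la\xi^1_t,S_t a\xi^2_t\ra$ for all $a$ and $t$, then for each fixed $t$ both $S_t$ and $T_t$ are adjointable bilinear contractions representing $\psi_t$ through $\la\xi^1_t,\,\cdot\;a\xi^2_t\ra$, so $S_t=T_t$ by Remark~\ref{uniqueness} (using minimality of $E^i_t$), whence $S=T$. The step I expect to demand the most care is the background bookkeeping behind the \emph{canonical} realization of $E^i$ inside $\E$: the behaviour of the compression functor $F\mapsto F^{(\B)}$ of Propositions~\ref{new module} and~\ref{new-two-sided} together with Remark~\ref{obs-new-module}, and the compatibility of the corner projections $\mathbb{E}_{ii}$ with the structure maps $\beta_{s,t}$. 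Routing the verification of the morphism identity \emph{through} the uniqueness clause of Theorem~\ref{main-single} is exactly what avoids having to expand $(\beta^1_{s,t})^*(T_s\odot T_t)\beta^2_{s,t}$ explicitly in terms of $\beta_{s,t}$ and the corner data; with that device the only substantial inputs are Theorem~\ref{main-single} and Remark~\ref{uniqueness}. (No continuity of $t\mapsto T_t$ is claimed or needed — the morphism axiom only asks for the exponential norm bound, here satisfied with $k=0$.)
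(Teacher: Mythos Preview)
Your proposal is correct and follows essentially the same route as the paper: the corner inclusion systems $(E^i,\beta^i,\xi^{\odot i})$ are built inside the inclusion system of $\Phi$ exactly as in the paper, and $T_t$ is produced via Theorem~\ref{main-single} in the same way. Your device of verifying the morphism identity by showing $(\beta^1_{s,t})^*(T_s\odot T_t)\beta^2_{s,t}$ also represents $\psi_{s+t}$ and then invoking Remark~\ref{uniqueness} is only a cosmetic repackaging of the paper's argument, which carries out the identical inner-product computation on generators $a\xi^i_{s+t}b$ explicitly---that computation \emph{is} the uniqueness argument written out.
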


     \begin{proof}
        We shall prove this extending the same ideas of the proof of Theorem \ref{main-single} to the semigroup level.
       Let $(E=(E_t),\beta=(\beta_{t,s}),\eta^\odot=(\eta_t))$ be the inclusion system associated to $\Phi.$ Note that $E_t$'s are von Neumann $M_2(\B)$-$M_2(\B)$-modules. Let $\mathbb{E}_{ij}:=\1 \otimes E_{ij}\in \B\otimes M_2,$ where $E_{ij}$'s are the matrix units in $M_2.$ Let $ \hat{E}_t^i:=\mathbb{E}_{ii} E_t\se E_t,i=1,2.$ Then $\hat{E}_t^i$'s are SOT closed $M_2(\B)$-submodules of $E_t$ such that $E_t=\hat{E}_t^1\oplus \hat{E}_t^2$ for all $t\ge 0.$
        Let  $\eta_t^i:=\mathbb{E}_{ii}\eta_t\mathbb{E}_{ii}\in \hat{E}_t^i,$ $ i=1,2.$  Then we have (as in the proof of Theorem \ref{main-single})
        \begin{equation}
        \eta_t=\eta_t^1+\eta_t^2\text{ with }\la \eta_t^1,\eta_t^ 2\ra=0\text{ and } \eta_t^i=\mathbb{E}_{ii}\eta_t=\eta_t\mathbb{E}_{ii}\text{\quad for all } t\ge0,i=1,2.\label{eq-pf-5}
        \end{equation}
       As $\beta_{t,s}:E_{t+s}\to E_t\odot E_s$ are the canonical maps $\eta_{t+s}\mapsto \eta_t\odot \eta_s,$   using \eqref{eq-pf-5} we have,
         \begin{equation}\label{eq-consistency}
         \beta_{t,s}(\eta^i_{t+s})=\beta_{t,s}(\mathbb{E}_{ii}\eta_{t+s}\mathbb{E}_{ii})=\mathbb{E}_{ii}\eta_t\odot\eta_s\mathbb{E}_{ii}=\eta_t^i\odot\eta_s^i\quad \text{ for  } t,s\ge 0, i=1,2.
         \end{equation}
%        \begin{equation}\label{eq-consistency}
%        \beta_{t,s}(\begin{pmatrix} a&0\\0&a\end{pmatrix}\eta^i_{t+s}\begin{pmatrix} b&0\\0&b\end{pmatrix})=\begin{pmatrix} a&0\\0&a\end{pmatrix}\mathbb{E}_{ii}\eta_t\odot\eta_s\mathbb{E}_{ii}\begin{pmatrix} b&0\\0&b\end{pmatrix}=\begin{pmatrix} a&0\\0&a\end{pmatrix}\eta_t^i\odot\eta_s^i\begin{pmatrix} b&0\\0&b\end{pmatrix}.
%        \end{equation}

        Consider the von Neumann $\B$-$\B$-modules $E_t^{(\B)}$ (as described in Remark \ref{obs-new-module}) and the von Neumann $\B$-modules $\hat{E}_t^ {i(\B)}$ (see Proposition \ref{new module}).  Notice that $\hat{E}_t^ {i(\B)}$ is also a von Neumann $\B$-$\B$-module for $i=1,2$ with the left action of $\B$ given by
         \begin{equation}
        a [x]:=[\begin{pmatrix}
        a &0\\0&a
        \end{pmatrix}x]\quad\text{for } a\in\B, x\in \hat{E}_t^i.
        \end{equation}
       Then, we have $E_t^{(\B)}\simeq \hat{E}_t^{1(\B)}\oplus \hat{E}_t^{2(\B)}$ (as two-sided von Neumann modules) for all $t\ge 0$ (as in the proof of Theorem \ref{main-single}). Let $\xi_t^i=[\eta_t^i]\in \hat{E}_t^{i(\B)}, i=1,2.$ Then  for $a\in \B,i=1,2,$ we have
        $$\la \xi_t^i,a\xi_t^i\ra= \sum_{r,s=1}^2\left\la \mathbb{E}_{ii}\eta_t, \begin{pmatrix} a&0\\0&a\end{pmatrix}\mathbb{E}_{ii}\eta_t\right\ra_{r,s}=\sum_{r,s=1}^2\Phi_t\left( \mathbb{E}_{ii}\begin{pmatrix} a&0\\0&a\end{pmatrix}\mathbb{E}_{ii}\right)_{r,s}=\phi_t^i(a).$$
        Therefore, $(\hat{E}_t^{i(\B)}, \xi_t^i)$ is a  GNS-representation (not necessarily minimal) for $\phi_t^i,i=1,2.$  Let $E_t^i= \overline{\spn}^s \B\xi_t^i\B\subseteq \hat{E}_t^{i(\B)}$ be the minimal GNS-module for $\phi_t^i$ for $ i=1,2.$  Let  $\beta_{t,s}^i:E_{t+s}^i\to E_t^i\odot E_s^i$ be the canonical maps (as in Remark \ref{eg-incl-sys}) given by $$\xi_{t+s}^i\mapsto \xi_t^i\odot \xi_s^i\quad\text{for } t,s\ge 0, i=1,2,$$ so that $(E^i=(E_t^i),\beta^i=(\beta_{t,s}^i),\xi^{\odot i}=(\xi^i_t))$ is the inclusion system associated to $\phi^i, i=1,2.$
        (Equation \eqref{eq-consistency} shows that, we get the inclusion systems associated to $\phi^i$'s in a canonical way from the  inclusion system associated to $\Phi$.)

        Let $V_t^i:E_t^i\to \hat{E}_t^{i(\B)}$ be the inclusion maps and let $U_t:\hat{E}_t^{2(\B)}\to\hat{E}_t^{1(\B)}$ be defined by $$U_t[w]=[\mathbb{E}_{12}w]\quad\text{for }w\in \hat{E}_t^2.$$ Then $V_t^i$'s are adjointable, bilinear isometries and  $U_t$'s are bilinear unitaries (as in the proof of Theorem \ref{main-single}).   Take  $T_t:=V_t^{1*}U_tV_t^2.$ Then  $T_t: E_t^2 \to E_t^1$ is an adjointable, bilinear contraction such that for $a\in \B,$
        \begin{align*}
        \la \xi_t^1,T_ta\xi_t^2\ra
        &=\la\xi_t^1,V_t^{1*}U_tV_t^2a\xi_t^2\ra
        =\la V_t^{1}[\eta_t^1],U_tV_t^2a[\eta_t^2]\ra=\left\la [\eta_t^1],U_t[\begin{pmatrix}
        a & 0\\0&a
        \end{pmatrix}\eta_t^2]\right\ra\\
        &=\left\la [\eta_t^1],[\begin{pmatrix}
        0 & a\\0&0
        \end{pmatrix}\eta_t^2]\right\ra=\sum_{i,j=1}^2\left\la\eta_t^1,\begin{pmatrix}
        0&a\\0& 0
        \end{pmatrix} \eta_t^2\right\ra_{i,j}\\
        &=\sum_{i,j=1}^2\left( \mathbb{E}_{11}\Phi_t\begin{pmatrix}
        0&a\\0& 0
        \end{pmatrix}\mathbb {E}_{22}\right)_{i,j}=\sum_{i,j=1}^2\begin{pmatrix}
        0&\psi_t(a)\\0& 0
        \end{pmatrix}_{i,j}
        =\psi_t(a).
        \end{align*}
       For $a,b,c,d\in\B,$
        \begin{align*}
        \la a\xi_{t+s}^1b,T_{t+s}(c\xi_{t+s}^2d)\ra
        %&=b^*\la \xi_{t+s}^1,a^*cT_{t+s}(\xi_{t+s}^2)\ra d\\
        &=b^*\psi_{t+s}(a^*c)d\\
        &=b^*\psi_s(\psi_t(a^*c))d\\
        &=b^*\psi_s(\la \xi_t^1,a^*cT_t\xi_t^2\ra) d\\
        &=b^*\la \xi_s^1,T_s(\la \xi_t^1,a^*cT_t\xi_t^2\ra \xi_s^2)\ra  d\\
        &=b^*\la \xi_s^1,\la \xi_t^1,a^*cT_t\xi_t^2\ra T_s\xi_s^2\ra  d\\
        &=b^*\la  \xi_t^1\odot \xi_s^1,a^*c(T_t\odot T_s)(\xi_t^2\odot \xi_s^2)\ra d \\
        &=\la \beta_{t,s}^1(a \xi_{t+s}^1b ),(T_t\odot T_s)\beta_{t,s}^2(c\xi_{t+s}^2d)\ra \\
        &=\la a \xi_{t+s}^1b ,\beta_{t,s}^{1*}(T_t\odot T_s)\beta_{t,s}^2(c\xi_{t+s}^2d)\ra,
        \end{align*}
        shows that $T:=(T_t)_{t\ge 0}$ is a morphism of inclusion systems  from $(E^2,\beta^2)$ to $(E^1,\beta^1).$

        To prove the uniqueness of $T$, let  $T'=(T'_t)_{t\ge 0}$ be another morphism of inclusion systems  from $(E^2,\beta^2,\xi^{\odot 2})$ to $(E^1,\beta^1,\xi^{\odot 1})$ such that $ \psi_t(a)=\la \xi_t^1,T'_t(a\xi_t^2)\ra$ for all $a\in \B, t\ge 0,$ then
        $$  \la a_1\xi_t^1 b_1 , T(a_2\xi_t^2 b_2) \ra
        %=b_1^*\la \xi_t^1, T(a_1^*a_2)\xi_t^2\ra b_2
        =b_1^*\psi_t(a_1^*a_2)b_2
        %=b_1^*\la \xi_t^1, T'(a_1^*a_2)\xi_t^2\ra b_2
        =\la a_1\xi_t^1 b_1 , T'(a_2\xi_t^2 b_2) \ra$$
        for $a_1,a_2,b_1,b_2 \in \B$ and hence $T_t=T'_t$ for all $t\ge 0.$
     \end{proof}

 \begin{example}
    Let $\B$ be a von Neumann algebra. Let $E$ be a von Neumann $M_2(\B)$-$M_2(\B)$-module. Take $\beta=\begin{pmatrix}
    \beta_1 &0\\0&\beta_2
    \end{pmatrix}$ in $M_2(\B)$ and $\zeta\in E$ such that $\zeta=\mathbb{E}_{11}\zeta\mathbb{E}_{11}+\mathbb{E}_{22}\zeta\mathbb{E}_{22},$ where $\mathbb{E}_{ij}=\1\otimes E_{ij}\in\B\otimes M_2$ and $\{E_{ij}\}_{i,j=1}^2$ are the matrix units in $M_2.$

    Let $\xi^\odot(\beta,\zeta)=(\xi_t{(\beta,\zeta)})_{t\in \R_+}\in \IG^\odot (E),$ the product system of time ordered Fock module over $E,$ where the component $\xi_t^n$ of $\xi_t{(\beta,\zeta)}\in \IG_t(E)$ in the $n$-particle $(n>0)$ sector is defined as
     \begin{equation}
      \xi_t^n(t_n,\dots , t_1)=e^{(t-t_n)\beta}\zeta\odot e^{(t_n-t_{n-1})\beta}\zeta\odot \cdots\odot e^{(t_2-t_1)\beta}\zeta e^{t_1\beta}.
     \end{equation}
        and $\xi_t^0=e^{t\beta}.$ Then it follows from \cite[Theorem  3]{LS} that, $\xi^\odot(\beta,\zeta)$ is a unit for the product system $\IG^\odot(E).$ Further if $\Phi_t^{(\beta,\zeta)}:M_2(\B)\to M_2(\B)$ is defined by
        \begin{equation}
        \Phi_t^{(\beta,\zeta)}(A)=\la \xi_t{(\beta, \zeta)}, A\xi_t{(\beta,\zeta)}\ra\quad \text{for } A\in M_2(\B),
        \end{equation}
        then $\Phi:=(\Phi_t)_{t\ge 0}$ is a uniformly continuous CP-semigroup on $M_2(\B),$ with bounded generator
        \begin{equation}
        L(A)=L^{(\beta,\zeta)}(A)=A\beta+\beta^* A+\la\zeta,A\zeta\ra \quad \text{for } A\in M_2(\B).
        \end{equation}

    Let
     $\zeta_i=\mathbb{E}_{ii}\zeta\mathbb{E}_{ii},i=1,2,$ then $\zeta=\zeta_1+ \zeta_2,\la\zeta_1,\zeta_2\ra=0.$ Let $\tau:M_2(\B)\to M_2(\B)$ be defined by $\tau(A)=\la \zeta, A\zeta\ra , A\in M_2(\B),$ then $\tau$ is a block CP map, say $\tau=\begin{pmatrix}
     \tau_{11}&\tau_{12}\\\tau_{12}^*&\tau_{22}
     \end{pmatrix}.$

     Note that $(E^{(\B)},[\zeta_i])$ is a GNS-representation for $\tau_{ii}, i=1,2,$ where $E^{(\B)}$ is the von Neumann $\B$-$\B$-module as described in  Remark \ref{obs-new-module}.

     Let $E_i=\overline{\spn}^s\B [\zeta_i]\B\se E^{(\B)}$  be the minimal GNS-representation for $\tau_{ii},i=1,2$ and let  $T:E_2\to E_1$ be the unique bilinear, adjointable contraction such that $\tau_{12}(a)=\la [\zeta_1],Ta[\zeta_2]\ra$ as given in Theorem \ref{main-single}.  Therefore, we have
    $$\tau(A)=\la \zeta,A\zeta\ra=\begin{pmatrix}
    \la [\zeta_1], a_{11}[\zeta_1]\ra&\la [\zeta_1], Ta_{12}[\zeta_2]\ra\\\la [\zeta_2], T^*a_{21}[\zeta_1]\ra& \la [\zeta_2], a_{22}[\zeta_2]\ra
    \end{pmatrix}, \text{ for }A=\begin{pmatrix}
    a_{11}&a_{12}\\a_{21}&a_{22}
    \end{pmatrix}\in M_2(\B)$$
    and hence
    \begin{align*}
    L(A)&=A\beta+\beta^* A+\la\zeta,A\zeta\ra \\
    %&=\begin{pmatrix}    a_{11}&a_{12}\\a_{21}&a_{22}    \end{pmatrix}\begin{pmatrix}    \beta_1 &0\\0&\beta_2    \end{pmatrix}+\begin{pmatrix}     \beta_1^*&0\\0&\beta_2^*    \end{pmatrix}\begin{pmatrix}    a_{11}&a_{12}\\a_{21}&a_{22}    \end{pmatrix}+\begin{pmatrix}    \la [\zeta_1], a_{11}[\zeta_1]\ra&\la [\zeta_1], Ta_{12}[\zeta_2]\ra\\\la [\zeta_2], T^*a_{21}[\zeta_1]\ra& \la [\zeta_2], a_{22}[\zeta_2]\ra    \end{pmatrix}\\
    %&=[\begin{smallmatrix}a_{11}\beta_1+\beta_1^*a_{11}+\la \zeta, a_{11}\mathbb{E}_{11}\zeta\ra_{11}  &a_{12}\beta_2+\beta_1^*a_{12}+\la \zeta, a_{12}\mathbb{E}_{12}\zeta\ra_{12}\\  a_{21}\beta_1+\beta_2^*a_{21}+\la \zeta, a_{21}\mathbb{E}_{21}\zeta\ra_{21} &a_{22}\beta_2+\beta_2^*a_{22}+ \la \zeta, a_{22}\mathbb{E}_{22}\zeta\ra_{22}   \end{smallmatrix}]\\
    &=\begin{pmatrix}
    a_{11}\beta_1+\beta_1^*a_{11}+\la [\zeta_1], a_{11}[\zeta_1]\ra
    &a_{12}\beta_2+\beta_1^*a_{12}+\la [\zeta_1], Ta_{12}[\zeta_2]\ra\\
    a_{21}\beta_1+\beta_2^*a_{21}+\la [\zeta^ 2], T^*a_{21}[\zeta_1]\ra
    &a_{22}\beta_2+\beta_2^*a_{22}+ \la [\zeta_2], a_{22}[\zeta_2]\ra
    \end{pmatrix}\\
    %&=[\begin{smallmatrix} L^{(\beta_1,\zeta_1)}(a_{11})&a_{12}\beta_2+\beta_1^*a_{12}+\la \zeta_1, \mathbb{E}_{12}a_{12}\zeta_2\ra_\B\\   a_{21}\beta_1+\beta_2^*a_{21}+\la \zeta^ 2, \mathbb{E}_{21}a_{21}\zeta_1\ra_\B&L^{(\beta_2,\zeta_2)}(a_{22})    \end{smallmatrix}]\\
    %&=\begin{pmatrix}L^{(\beta_1,\zeta_1)}(a_{11}) &a_{12}\beta_2+\beta_1^*a_{12}+\la \zeta_1, Ta_{12}\zeta_2\ra_\B\\  a_{21}\beta_1+\beta_2^*a_{21}+\la \zeta^ 2, T^*a_{21}\zeta_1\ra_\B  &L^{(\beta_2,\zeta_2)}(a_{22})  \end{pmatrix}\\
    &=\begin{pmatrix}
    L_{11}^{(\beta_1,[\zeta_1])}(a_{11})
    &L_{12}^{(\beta_1, \beta_2,[\zeta_1],[\zeta_2], T)}(a_{12})\\
    L_{21}^{(\beta_1, \beta_2,[\zeta_1],[\zeta_2],T)}(a_{21})
    &L_{22}^{(\beta_2,[\zeta_2])}(a_{22})
    \end{pmatrix},
    \end{align*}
     where
    \begin{equation*}
    L_{ii}(a)=L_{ii}^{(\beta_i,[\zeta_i])}(a)=a\beta_i+\beta_i^*a+\la[\zeta_i],a[\zeta_i]\ra ,\quad i=1,2,
    \end{equation*} and
    \begin{equation}
    L_{12}(a)=L_{12}^{(\beta_1, \beta_2,[\zeta_1],[\zeta_2],T)}(a)=a\beta_2+\beta_1^*a+\la [\zeta_1], Ta[\zeta_2]\ra_\B,\label{L_{12}(1)}
    \end{equation}
    $$L_{21}(a)= L_{12}(a^*)^*,$$
    for $a\in\B.$ Therefore, for $A=\begin{pmatrix}
    a_{11}&a_{12}\\a_{21}&a_{22}
    \end{pmatrix}\in M_2(\B),$
    \begin{equation*}
    \Phi_t(A)=e^{tL(A)}=\begin{pmatrix}
    e^{tL_{11}^{(\beta_1,[\zeta_1])}(a_{11})}
    &e^{tL_{12}^{(\beta_1, \beta_2,[\zeta_1],[\zeta_2], T)}(a_{12})}\\
    e^{tL_{21}^{(\beta_1, \beta_2,[\zeta_1],[\zeta_2],T)}(a_{21})}
    &e^{tL_{22}^{(\beta_2,[\zeta_2])}(a_{22})}
    \end{pmatrix}.
    \end{equation*}

    Now note that the inclusion system $(E^i=(E_t^i),\xi^i=(\xi_t^i(\beta_i,[\zeta_i])))$ associated to $\phi^i=(e^{tL_{ii}^{(\beta_i,[\zeta_i])}})_{t\ge 0}$ is a subsystem of the product system of time-ordered Fock module $\IG^\odot (E_i)$ over $E_i, i=1,2.$

   Let $w=(w_t)_{t\ge 0}$ be the  contractive morphism  from $(E^2,\xi^2)$ to $(E^1,\xi^1)$ such that
    \begin{equation}
    e^{tL_{12}}(a)=\la \xi_t^1(\beta_1,[\zeta_1]), aw_t(\xi_t^2(\beta_2,[\zeta_2]))\ra,\quad \text{for all } a\in \B.  \label{eq-sgp-cor}
    \end{equation}
    As any morphism maps a unit to a unit we have
    \begin{equation}
    w_t(\xi_t^2(\beta_2,[\zeta_2]))=\xi_t^1(\gamma_w(\beta_2,[\zeta_2]),\eta_w(\beta_2,[\zeta_2]))\label{eq-unit-unit}
    \end{equation}for some $\gamma_w,\eta_w:\B\times E_2\to \B\times E_1.$
    Hence from \eqref{eq-sgp-cor} and \eqref{eq-unit-unit} we have
    \begin{equation*}
    e^{tL_{12}}(a)=\la \xi_t^1(\beta_1,[\zeta_1]), \xi_t^1(\gamma_w(\beta_2,[\zeta_2]),\eta_w(\beta_2,[\zeta_2]))\ra.
    \end{equation*}
    Now by differentiating \eqref{eq-sgp-cor}, we get
    \begin{equation}
    L_{12}(a)=\la \zeta_1,a \eta_w(\beta_2,[\zeta_2])\ra+a\gamma_w(\beta_2,[\zeta_2])+\beta_1^*a\label{L_{12}(2)}.
    \end{equation}
    Therefore as \eqref{L_{12}(1)}=\eqref{L_{12}(2)} we have
    $\gamma_w(\beta_2,[\zeta_2])=\beta_2$ and $ \eta_w(\beta_2,[\zeta_2])=T[\zeta_2].$
    Thus, the unique morphism $(w_t)$ is given by $$w_t\xi_t^2(\beta_2,[\zeta_2])=\xi_t^1(\beta_2, T[\zeta_2]).$$
 \end{example}

 \subsection{$E_0$-dilation of  block quantum Markov semigroups}\label{subsec-2}

     In this subsection we shall prove that if we have a block QMS on a unital $C^*$-algebra then the $E_0$-dilation constructed in \cite{BS} is also a semigroup of block maps.

     Let $\B$ be a unital $C^*$-algebra. Let $p\in \B$ be a projection. Denote $p'=\1-p.$ Let $\Phi=(\Phi_t)_{t\ge 0}$ be a block QMS on $\B$ with respect to $p.$

     (We have some changes in the notations from \cite{BS}: $\E_t\leadsto E_t, %E_\ft\leadsto E_\ft,
      E_t\leadsto \E_t, E\leadsto \E$)

     Let $(E=(E_t),\xi^\odot=(\xi_t))$ be the inclusion system associated to $\Phi.$ Recall  from \cite[Sections 4, 5]{BS} that
     \[
     \begin{tikzcd}
     (E_t,\xi_t)\arrow[d, "\text{first inductive limit}"] \\
     (\E_t,\xi^t)\arrow["\text{second inductive limit}", d]\\
     (\E,\xi)
     \end{tikzcd}\]
     That is, we have a $\B$-module $\E$ with $\E\simeq \E\odot \E_t,$ a representation $j_0:\B\to \Ba(\E)$ ($b\mapsto \ket{\xi}b\bra{\xi}$) and
     endomorphisms  $\vartheta_t:\B^a(\E)\to \B^a(\E)$ defined by $\vartheta_t(a)=a\odot \id_{\E_t}$ such that  $(\vartheta_t)_{t\ge 0}$ is an $E_0$-dilation of $(\Phi_t)_{t\ge 0}.$ Moreover, we have the Markov property
     \begin{equation}
     j_0(\1) \vartheta_t(j_0(x))j_0(\1)=j_0(\Phi_t(x)),\quad  x\in \B.\label{eq-Markov-property}
     \end{equation}
     This  implies that $j_0(\1) \vartheta_t(j_0(\1))j_0(\1)=j_0(\Phi_t(\1))=j_0(\1).$
     Since $j_0(\1)$ is a projection, we have $j_0(\1)\le \vartheta_t(j_0(\1))$ and hence  $(\vartheta_t(j_0(\1)))_{t\ge 0}$ is an increasing family of projections. Hence it converges in SOT.
     Now if $k_s:\E_s\to \E$ are the canonical maps ($x_s\mapsto\xi\odot x_s$)  then
     \begin{equation}
        \overline{\spn}~k_s(\E_s)=\E.\label{eq-span-1}
     \end{equation}
      Hence  $\vartheta_t(j_0(\1))(\E_t)=\vartheta_t(j_0(\1))(\xi \odot\E_t)=(\ketbra{\xi}{\xi}\odot \id_{\E_t})(\xi
       \odot\E_t)=\xi \odot\E_t$ shows that $\vartheta_t(j_0(\1))_{t\ge 0}$ is converging in SOT to $\id_\E,$ the identity on $\E.$

     Now  for $q=p$ or $p',$ consider $\vartheta_t(j_0(q))=\vartheta_t(|\xi\ra q \la \xi|).$ Note that since $\Phi$ is a unital block semigroup $\Phi_t(q)=q$ for $q=p,p'.$ Hence by the Markov property \eqref{eq-Markov-property} we have
     \begin{equation}
     j_0(\1) \vartheta_t(j_0(q))j_0(\1)=j_0(\Phi_t(q))=j_0(q), \quad \text{for } q=p,p'. \label{mark-rel}
     \end{equation}

     Note that $j_0(\1)=j_0(p)+j_0(p')$ and $j_0(p)j_0(p')=j_0(p')j_0(p)=0.$ Hence multiplying by $j_0(q)$ on both sides of Equation \eqref{mark-rel} we get
     \begin{equation*}
     j_0(q) \vartheta_t(j_0(q))j_0(q)=j_0(q),\quad \text{for } q=p,p'.
     \end{equation*}
         Since $j_0(q)$ is a projection, we have $j_0(q)\le \vartheta_t(j_0(q))$  for all $t,$ hence $\vartheta_s(j_0(q))\le \vartheta_t(j_0(q))$ for $s\le t.$ Therefore $(\vartheta_t(j_0(q)))_{t\ge 0}$ is an increasing family of projections in $\B^a(\E). $ Say  $(\vartheta_t(j_0(p)))_{t\ge 0}$ converges to $P.$ Then as $(\vartheta_t(j_0(\1))_{t\ge 0}$ converges to $\id_\E,$ $(\vartheta_t(j_0(p')))_{t\ge 0}$ will converge to $P'=\id_\E-P.$ Note that  we have   $PP'=0$ and
     \begin{equation}\label{eq-span-2}
        \overline{\spn}^s \vartheta_t(j_0(p))(\E)=P(\E)\quad \text{and}\quad\overline{\spn}^s \vartheta_t(j_0(p'))(\E)=P'(\E).
     \end{equation}
    Thus, we have $\E=\E^{(1)}\oplus\E^{(2)}$ where $\E^{(1)}=P(\E)$  and  $\E^{(2)}=P'(\E).$
\begin{lemma} \label{lem-tec-sec-4.2}
        $P(\E_t)=\vartheta_t(j_0(p))(\E_t)$ and $P'(\E_t)=\vartheta_t(j_0(p'))(\E_t)$ for all $t\ge 0.$
            %\item $P(\E_t)\se P(\E_s)$ and $P'(\E_t)\se P'(\E_s)$ for $t\le s.$
            %\item  $P(\E_t)\uparrow \E^{(1)}, P'(\E_t)\uparrow \E^{(2)}.$
\end{lemma}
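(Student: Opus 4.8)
In the statement, $\E_t$ is shorthand for the submodule $k_t(\E_t)=\xi\odot\E_t\subseteq\E$, exactly as in the display just before the lemma. The plan is this: since $\vartheta_t(j_0(p))\le\vartheta_s(j_0(p))$ for $s\ge t$ and the increasing family $(\vartheta_s(j_0(p)))_{s\ge 0}$ converges strongly to $P$, it suffices to show that the action of $\vartheta_s(j_0(p))$ on $k_t(\E_t)$ does not depend on $s$ once $s\ge t$. Then that (eventually constant) net on $k_t(\E_t)$ has limit $\vartheta_t(j_0(p))|_{k_t(\E_t)}$, so $P|_{k_t(\E_t)}=\vartheta_t(j_0(p))|_{k_t(\E_t)}$, which is the first identity; the second follows verbatim with $p$ replaced by $p'$, using that $\Phi$ being a block QMS gives $\Phi_t(p')=p'$ as well.

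First I would record the two identifications from \cite{BS} that are needed. In the second inductive limit building $\E$ from the $\E_t$, the connecting maps $\iota_{t,s}\colon\E_t\to\E_s$ (for $t\le s$) are isometric bimodule morphisms compatible with the canonical maps, i.e. $k_s\circ\iota_{t,s}=k_t$; hence the $k_t$ are isometric, and under the identification $\E\simeq\E\odot\E_s$ the map $k_s$ becomes $z\mapsto\xi\odot z$ while $\vartheta_s(a)$ becomes $a\odot\id_{\E_s}$. Consequently, for $y_t\in\E_t$ the element $k_t(y_t)=k_s(\iota_{t,s}(y_t))$ is carried to $\xi\odot\iota_{t,s}(y_t)$ in $\E\odot\E_s$. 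Also, as $\Phi$ is a QMS, $\la\xi,\xi\ra=\la\xi^t,\xi^t\ra=\Phi_t(\1)=\1$, so $j_0(p)\xi=\ket{\xi}p\bra{\xi}\,\xi=\xi p\la\xi,\xi\ra=\xi p$.

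The computation is then immediate. Fix $t\ge0$, $y_t\in\E_t$, and $s\ge t$, and work in the picture $\E\simeq\E\odot\E_s$; using the balancing relation $\zeta b\odot w=\zeta\odot bw$ and the left $\B$-linearity of $\iota_{t,s}$,
\begin{align*}
\vartheta_s(j_0(p))\big(k_t(y_t)\big)
&=(j_0(p)\odot\id_{\E_s})\big(\xi\odot\iota_{t,s}(y_t)\big)
=(j_0(p)\xi)\odot\iota_{t,s}(y_t)
=(\xi p)\odot\iota_{t,s}(y_t)\\
&=\xi\odot p\,\iota_{t,s}(y_t)
=\xi\odot\iota_{t,s}(py_t)
=k_s\big(\iota_{t,s}(py_t)\big)
=k_t(py_t).
\end{align*}
Thus $\vartheta_s(j_0(p))(k_t(y_t))=k_t(py_t)$ for every $s\ge t$, with no dependence on $s$; taking $s=t$ (so $\iota_{t,t}=\id_{\E_t}$) identifies this common value with $\vartheta_t(j_0(p))(k_t(y_t))$. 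Letting $s\to\infty$ gives $P(k_t(y_t))=k_t(py_t)=\vartheta_t(j_0(p))(k_t(y_t))$ for all $y_t\in\E_t$, so $P(\E_t)=k_t(p\E_t)=\vartheta_t(j_0(p))(\E_t)$, and likewise $P'(\E_t)=\vartheta_t(j_0(p'))(\E_t)$.

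The one point requiring care is unwinding the identifications of \cite[Sections~4--5]{BS} faithfully, in particular that the connecting maps of the inductive limit are bimodule maps, so that $p\,\iota_{t,s}(y_t)=\iota_{t,s}(py_t)$ — this is precisely the step that kills the $s$-dependence. Everything else is routine bookkeeping with balanced tensor products, so I do not anticipate a genuine obstacle.
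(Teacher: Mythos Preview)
Your overall strategy matches the paper's: show that $\vartheta_s(j_0(q))$ restricted to $k_t(\E_t)$ is independent of $s\ge t$, hence equals its limit $P$ (resp.\ $P'$). The gap is precisely at the step you flag as ``the one point requiring care.'' The connecting maps of the second inductive limit in \cite[Section~5]{BS} are \emph{not} bimodule morphisms: concretely, $\iota_{t,s}(y_t)=\xi^{s-t}\odot y_t$ under the identification $\E_s\simeq\E_{s-t}\odot\E_t$, and this is right $\B$-linear but left $\B$-linear only when $b\xi^{s-t}=\xi^{s-t}b$. So your identity $p\,\iota_{t,s}(y_t)=\iota_{t,s}(py_t)$ cannot be read off from \cite{BS}; it amounts exactly to $p\xi^{s-t}=\xi^{s-t}p$.

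That identity \emph{is} true here, but it uses the block hypothesis, not general module theory. The paper proves it directly: since $\Phi$ is a unital block semigroup, $\Phi_r(p)=p$ and $\Phi_r(p')=p'$, whence
\[
\norm{p\xi^r-p\xi^rp}^2=\norm{p\xi^rp'}^2=\norm{p'\Phi_r(p)p'}=0,\qquad
\norm{\xi^rp-p\xi^rp}^2=\norm{p'\xi^rp}^2=\norm{p\Phi_r(p')p}=0,
\]
so $p\xi^r=\xi^rp$ (and similarly for $p'$). With this in hand, for $s\ge t$ and $x\in\E_t$ one computes in $\E\simeq\E\odot\E_s$:
\[
\vartheta_s(j_0(q))(x)=\xi\odot q\xi^{s-t}\odot x=\xi\odot\xi^{s-t}q\odot x=\xi\odot\xi^{s-t}\odot qx=\xi\odot qx=\vartheta_t(j_0(q))(x).
\]
Once you insert this computation (or equivalently prove $p\xi^r=\xi^rp$), your argument goes through and is essentially identical to the paper's; without it, the crucial left-linearity claim is unjustified.
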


 \begin{proof}
Fix $t\ge0.$ It is enough to prove for $q=p,p'$ that   $\vartheta_s(j_0(q))(x)=\vartheta_t(j_0(q))(x)$ if $x\in \E_t,s\ge t.$ Note that (since $\la \xi^t, q\xi^t\ra= \Phi_t(q)=q$ for $q=p$ or $p'$) we have
    \begin{equation*}
        \norm{p\xi^t-p\xi^tp}^ 2=\norm{p\xi^tp'}^2%=\norm{\la p\xi^tp', p\xi^tp'\ra}
        =\norm{p'\Phi_t(p)p'}%=\norm{p'pp'}
        =0=\norm{p\Phi_t(p')p}=\norm{p'\xi^tp}^2=\norm{\xi^tp-p\xi^tp}^ 2.\label{eq-lem-tec-pf-1}
    \end{equation*}
%        \begin{equation}\label{eq-lem-tec-pf-2}
%    \norm{\xi^tp-p\xi^tp}^ 2=\norm{p'\xi^tp}^2=\norm{\la p'\xi^tp, p'\xi^tp\ra}=\norm{p\Phi_t(p')p}=\norm{pp'p}=0.
%    \end{equation}
   % Equations \eqref{eq-lem-tec-pf-1} and \eqref{eq-lem-tec-pf-2}
   This  implies that  $p\xi^t=\xi^tp=p\xi^tp.$ Similarly we have  $p'\xi^t=\xi^tp'=p'\xi^tp'.$

     Let $q=p$ or $p'$ and let $s\ge t.$ If $x\in\E_t,$ then $\xi^{s-t}\odot x\in \E_s$ and
     \begin{align*}
     \vartheta_s(j_0(q))(x)&=(\ket{\xi}q\bra{\xi}\odot \id_{\E_s})(\xi\odot \xi^{s-t}\odot x)=\xi\odot q\xi^{s-t}\odot  x=\xi\odot\xi^{s-t}q \odot x\\&=\xi\odot\xi^{s-t} \odot qx=\xi\odot qx=\vartheta_t(j_0(q))(x).
     \end{align*}
%     \begin{equation*}
%         \vartheta_s(j_0(q))(x)=\xi\odot q\xi^{s-t}\odot  x=\xi\odot\xi^{s-t}q \odot x=\xi\odot\xi^{s-t} \odot qx=\xi\odot qx=\vartheta_t(j_0(q))(x).
%    \end{equation*}
 \end{proof}

     We have from \cite[Theorem 5.4]{BS} that
     \begin{equation}
     \E\simeq\E\odot\E_t, \text{\quad for all } t\ge 0.
     \end{equation}

     Now we shall prove a similar result for $\E^{(i)}$'s by recalling the proof of this
     result. It is important to note that we are not getting something like
     $\E^{(i)} = \E^{(i)}\odot \E _t^{(i)}$, and we have not even
     bothered to define $ \E _t^{(i)}$.

     \begin{lemma}
        $\E^{(i)}\simeq \E^{(i)}\odot \E_t,$ for $ i=1,2, t\ge 0.$
     \end{lemma}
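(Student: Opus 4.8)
The plan is to push the isomorphism $\E\simeq\E\odot\E_t$ of \cite[Theorem 5.4]{BS} down to the two summands, by showing that it intertwines the projection $P$ (onto $\E^{(1)}$) with $P\odot\id_{\E_t}$, and the projection $P'$ (onto $\E^{(2)}$) with $P'\odot\id_{\E_t}$; restricting the isomorphism then gives the claim. Fix a bilinear unitary $u_t\colon\E\odot\E_t\to\E$ implementing the identification used to define the dilation, so that $\vartheta_t(a)=u_t(a\odot\id_{\E_t})u_t^{*}$ for every $a\in\B^a(\E)$. For $q=p$ or $p'$, the semigroup property $\vartheta_s=\vartheta_t\circ\vartheta_{s-t}$ (valid for $s\ge t$) gives
\[
\vartheta_s(j_0(q))=u_t\bigl(\vartheta_{s-t}(j_0(q))\odot\id_{\E_t}\bigr)u_t^{*}.
\]
Now I would let $s\to\infty$ with $t$ fixed. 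The left-hand side converges strongly to $P$ when $q=p$ and to $P'$ when $q=p'$, by the SOT-convergence of the increasing families of projections $(\vartheta_s(j_0(q)))_{s}$ established above. On the right-hand side $\vartheta_{s-t}(j_0(q))$ converges strongly to the same limit; since $a\mapsto a\odot\id_{\E_t}$ is strongly continuous on bounded sets and conjugation by the fixed unitary $u_t$ preserves strong limits, the right-hand side converges strongly to $u_t(P\odot\id_{\E_t})u_t^{*}$ (resp. $u_t(P'\odot\id_{\E_t})u_t^{*}$). Hence $P=u_t(P\odot\id_{\E_t})u_t^{*}$ and $P'=u_t(P'\odot\id_{\E_t})u_t^{*}$, equivalently $u_t^{*}Pu_t=P\odot\id_{\E_t}$ and $u_t^{*}P'u_t=P'\odot\id_{\E_t}$ on $\E\odot\E_t$.

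From these intertwining relations the statement is immediate: $u_t^{*}$ is a bilinear unitary carrying the range $\E^{(1)}=P(\E)$ onto $(P\odot\id_{\E_t})(\E\odot\E_t)$, which is canonically $P(\E)\odot\E_t=\E^{(1)}\odot\E_t$, and likewise $\E^{(2)}=P'(\E)$ onto $\E^{(2)}\odot\E_t$. Thus $u_t^{*}|_{\E^{(i)}}\colon\E^{(i)}\to\E^{(i)}\odot\E_t$ is the desired bilinear isomorphism for $i=1,2$, and it is worth stressing (as the text already does) that the right-hand factor is $\E_t$, not some $\E_t^{(i)}$.

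If one prefers to stay closer to the construction, the same conclusion can be reached by recalling the proof of \cite[Theorem 5.4]{BS} in terms of the canonical maps $k_s\colon\E_s\to\E$ and the formula $u_t(k_s(x)\odot y)=k_{s+t}(x\odot y)$. By Lemma \ref{lem-tec-sec-4.2} one has $Pk_s(x)=k_s(px)$ and $P'k_s(x)=k_s(p'x)$ for $x\in\E_s$, so that $\E^{(1)}=\overline{\spn}^s\{k_s(p\E_s):s\ge0\}$ and $\E^{(2)}=\overline{\spn}^s\{k_s(p'\E_s):s\ge0\}$; tracking these submodules through $u_t$ and using $k_r(q\E_r)\subseteq k_{r+t}(q\E_{r+t})$ yields $u_t(\E^{(i)}\odot\E_t)=\E^{(i)}$ directly. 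In either route, the one point demanding care is the passage to the strong limit — that tensoring with $\id_{\E_t}$ is strongly continuous on the bounded net $(\vartheta_s(j_0(q)))_s$ and that the limit really is the projection onto $\E^{(i)}$ — which is precisely the role played by Lemma \ref{lem-tec-sec-4.2} together with the earlier remarks on SOT-limits of increasing families of projections.
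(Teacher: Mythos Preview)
Your proposal is correct, and your primary argument is genuinely different from the paper's. The paper works directly with the explicit description of $u_t$ on elementary tensors, $u_t(k_s(x_s)\odot y_t)=k_{s+t}(x_s\odot y_t)$, and simply verifies the inclusion $u_t(\E^{(i)}\odot\E_t)\subseteq\E^{(i)}$ by computing
\[
u_t\bigl(\vartheta_s(j_0(q))k_s(x_s)\odot y_t\bigr)=\vartheta_{s+t}(j_0(q))k_{s+t}(x_s\odot y_t),
\]
which lies in $\E^{(i)}$; no limits are taken and Lemma~\ref{lem-tec-sec-4.2} is invoked only to reduce to this dense spanning set. Your route instead establishes the intertwining relation $P=u_t(P\odot\id_{\E_t})u_t^*$ by writing $\vartheta_s(j_0(q))=u_t(\vartheta_{s-t}(j_0(q))\odot\id_{\E_t})u_t^*$ and passing to the SOT limit in $s$. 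This is cleaner and more conceptual, and it makes transparent that the isomorphism is just the restriction of $u_t$; the paper's computation gives the same isomorphism but leaves that fact implicit. Your alternative description in the final paragraph (tracking $k_s(q\E_s)$ through $u_t$) is essentially the paper's own argument, phrased with the identity $Pk_s(x)=k_s(px)$ made explicit.

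Two minor comments. First, $u_t$ is a unitary of right $\B$-modules ($\E$ is only a right module here), so ``bilinear'' is a slight overstatement, though harmless. Second, the strong continuity of $a\mapsto a\odot\id_{\E_t}$ on bounded sets is a general fact (on elementary tensors $\|(a_\alpha x-ax)\odot y\|\le\|a_\alpha x-ax\|\,\|y\|$, then extend by density and uniform boundedness); this is not really what Lemma~\ref{lem-tec-sec-4.2} provides, so your closing attribution is slightly off, but the argument itself stands.
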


     \begin{proof}
       Let $k_t:\E_t\to \E$ be the canonical maps (isometries). Then  $u_t:\E\odot\E_t\to \E$ defined by
        \begin{equation}
        u_t(k_s(x_s)\odot y_t)=k_{s+t}(x_s\odot y_t)
        \end{equation}
        for $x_s\in \E_s, y_t\in \E_t,$ is a unitary (\cite[Theorem 5.4]{BS}). Hence,  we have $\E\simeq\E\odot\E_t.$ Since $\E=\E^{(1)}\oplus\E^{(2)},$ we have, $\E\simeq \E^{(1)}\odot\E_t\oplus\E^{(2)}\odot\E_t.$

         We shall prove that, the restriction of this unitary $u_t$ to $\E^{(i)}\odot\E_t$ is a unitary from $\E^{(i)}\odot\E_t$ onto $\E^{(i)}.$  It is enough to prove that $u_t(\E^{(i)}\odot\E_t)\subseteq \E^{(i)}.$ To prove this, (from \eqref{eq-span-1}, \eqref{eq-span-2} and Lemma \ref{lem-tec-sec-4.2})  it is sufficient to prove that $u_t(\vartheta_s(j_0(p))k_s(\E_s)\odot \E_t)\subseteq \E^{(1)}$ and $u_t(\vartheta_s(j_0(p'))k_s(\E_s)\odot \E_t)\subseteq \E^{(2)}.$ To prove this consider for  $q=p$ or $p' $ and $x_s\in \E_s$
        \begin{align*}
        u_t(\vartheta_s(j_0(q))k_s(x_s)\odot y_t)&=u_t((\xi q\odot x_s)\odot y_t)=u_t((\xi \odot q x_s)\odot y_t)\\
        &= u_t(k_s(q x_s)\odot y_t)\\
        &=k_{s+t}(q x_s\odot y_t)\\
        &=\xi\odot q x_s\odot y_t=\xi q\odot x_s\odot y_t\\
        &=\vartheta_{s+t}(j_0(q))k_{s+t}(x_s\odot y_t),
        \end{align*}
       which is in $\E ^{(1)}$ if $q=p$ and is in $\E ^{(2)}$ if
       $q= p'.$
     \end{proof}

     \begin{theorem}\label{thm-block-E-zero}
        The $E_0$-dilation $\vartheta=(\vartheta_t)_{t\ge 0}$ of $\Phi$ is a semigroup of block maps with respect to the projection $P$ defined above.
     \end{theorem}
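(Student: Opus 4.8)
The plan is to reduce everything to the single identity $\vartheta_t(P)=P$ for all $t\ge 0$. Once this is established, the block property is immediate from the fact that each $\vartheta_t$ is a unital $*$-endomorphism of $\B^a(\E)$: since $\vartheta_t$ is unital we also get $\vartheta_t(P')=\vartheta_t(\id_\E-P)=\id_\E-P=P'$, and then multiplicativity yields $\vartheta_t(PAP)=\vartheta_t(P)\vartheta_t(A)\vartheta_t(P)=P\vartheta_t(A)P$ for every $A\in\B^a(\E)$, and likewise $\vartheta_t(PAP')=P\vartheta_t(A)P'$, $\vartheta_t(P'AP)=P'\vartheta_t(A)P$, $\vartheta_t(P'AP')=P'\vartheta_t(A)P'$. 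Because $\id_\E=P+P'$ forces $A=PAP+PAP'+P'AP+P'AP'$, applying $\vartheta_t$ termwise shows that $\vartheta_t$ respects the block decomposition of $\B^a(\E)$ with respect to $P$; doing this for every $t$ gives that $\vartheta=(\vartheta_t)_{t\ge 0}$ is a semigroup of block maps.

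For the key identity $\vartheta_t(P)=P$ I would work through the identification $\E\simeq\E\odot\E_t$ implemented by the unitary $u_t$ from the lemma immediately preceding the theorem, under which $\vartheta_t(a)=u_t(a\odot\id_{\E_t})u_t^*$. Since $P$ is the orthogonal projection of $\E$ onto $\E^{(1)}$, the operator $P\odot\id_{\E_t}$ is the projection of $\E\odot\E_t$ onto the submodule $\E^{(1)}\odot\E_t$. By that preceding lemma (which in turn rests on Lemma \ref{lem-tec-sec-4.2}), the restriction of $u_t$ to $\E^{(1)}\odot\E_t$ is a unitary onto $\E^{(1)}$; hence $u_t(P\odot\id_{\E_t})u_t^*$ is precisely the projection of $\E$ onto $\E^{(1)}$, i.e.\ $\vartheta_t(P)=P$. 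Equivalently, for $\eta=u_t\omega\in\E$ one has $\eta\in\E^{(1)}$ iff $\omega\in\E^{(1)}\odot\E_t$ iff $(P\odot\id_{\E_t})\omega=\omega$, and $\eta\in\E^{(2)}$ iff $(P\odot\id_{\E_t})\omega=0$, so $\vartheta_t(P)$ acts as $\id$ on $\E^{(1)}$ and as $0$ on $\E^{(2)}$.

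An alternative route to $\vartheta_t(P)=P$ uses the definition $P=\slim_{s\to\infty}\vartheta_s(j_0(p))$ directly: by the semigroup law $\vartheta_t\circ\vartheta_s=\vartheta_{s+t}$ and strong continuity of $\vartheta_t$ along the increasing (hence uniformly bounded) net of projections $(\vartheta_s(j_0(p)))_{s}$, one gets $\vartheta_t(P)=\slim_{s\to\infty}\vartheta_{s+t}(j_0(p))=P$. The genuinely delicate point here — and the only real obstacle in the whole argument — is justifying the interchange of $\vartheta_t$ with the strong limit over a Hilbert $C^*$-module rather than a Hilbert space (together with the existence of that limit for an increasing net of projections). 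For this reason I would prefer the first route, which sidesteps the issue entirely by invoking the already-constructed unitary $u_t$; the remaining content there is just the routine verification that tensoring the projection $P$ with $\id_{\E_t}$ gives the projection onto $\E^{(1)}\odot\E_t$, after which the block decomposition of $\B^a(\E)$ follows formally from the multiplicativity of $\vartheta_t$ and $\E=\E^{(1)}\oplus\E^{(2)}$.
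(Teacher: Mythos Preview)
Your argument is correct and, via your first route, rests on the same ingredient as the paper's proof—the preceding lemma that $u_t$ restricts to a unitary $\E^{(i)}\odot\E_t\to\E^{(i)}$. The paper simply skips the intermediate step $\vartheta_t(P)=P$ and observes directly that for $a\in\Ba(\E^{(i)},\E^{(j)})$ one has $\vartheta_t(a)=a\odot\id_{\E_t}\in\Ba(\E^{(i)}\odot\E_t,\E^{(j)}\odot\E_t)=\Ba(\E^{(i)},\E^{(j)})$, which is exactly your computation applied to a general block rather than to $P$ alone.
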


 \begin{proof}
    As $\E=\E^{(1)}\oplus\E^{(2)},$ we have
    \[\Ba(\E)=\begin{pmatrix}
    \Ba(\E^{(1)})&\Ba(\E^{(2)},\E^{(1)})\\ \Ba(\E^{(1)},\E^{(2)})&\Ba(\E^{(2)})
    \end{pmatrix}.
    \]

    For any $i,j\in\{1,2\},$ let $a\in\Ba(\E^{(i)},\E^{(j)}),$ then  $$\vartheta_t(a)=a\odot \id_{\E_t}\in \Ba(\E^{(i)}\odot \E_t,\E^{(j)}\odot \E_t)=\Ba(\E^{(i)},\E^{(j)}). $$
    Therefore $\vartheta_t$ acts block-wise.
 \end{proof}

%     \begin{problem} Let $\B$ be a von Neumann algebra and let $\Phi$ be a block QMS on $M_2(\B),$ then we have Theorem \ref{main-semigroup} for $\Phi.$ Now we have proved in Theorem \ref{thm-block-E-zero} that the $E_0$-dilation $\vartheta$ of $\Phi$ is a semigroup of  block endomorphisms. Prove a structure theorem similar to Theorem \ref{main-semigroup} for the block $E_0$-semigroup $\vartheta$ such that when
 %     we restrict to $M_2(\B)$ we get the structure theorem of $\Phi.$
 %    \end{problem}

     \section{Lifting of morphisms}\label{lift-morphsm}

     In this section we will show that any (weak) morphism between two inclusion systems of  von Neumann $\B$-$\B$-modules can be always lifted as a morphism between the product systems generated by them.

     We shall introduce some notations and results from \cite{BS} and \cite{BM}.     For all $t> 0$ we define
     \begin{equation}
     \J_t:=\{\ft=(t_n,\dots,t_1)\in \T^n:t_i>0 ,|\ft|=t, n\in \mathbb{N}\}
     \end{equation}
     and for $\fs=(s_m,\dots,s_1)\in\J_s$ and $\ft=(t_n,\dots,t_1)\in\J_t$ we define the \emph{joint tuple} $\fs\smile \ft\in \J_{s+t}$ by
     $$\fs\smile\ft=((s_m,\dots,s_1),(t_n,\dots,t_1))=(s_m,\dots,s_1,t_n,\dots,t_1).$$

     We have a partial order $``\ge"$ on $\J_t$ as follows:  $\ft\ge \fs=(s_m,\dots, s_1),$ if for each $j$ $(1\le j\le m)$ there are (unique) $\fs_j\in \J_{s_j}$ such that $\ft=\fs_m\smile \dots \smile \fs_1$ (In  this case we also write $\fs\le \ft,$ to mean $\ft\ge \fs$).

     For $t=0$ we extend the definition of $\J_t$ as $\J_0=\{()\},$ where $()$ is the empty tuple. Also for $\ft\in \J_t$ we put $\ft\smile ()=\ft=()\smile \ft.$

     Now we will describe the construction of product system generated by an inclusion system of von Neumann $\B$-$\B$-modules using the inductive limits. (This construction holds also for Hilbert $\B$-$\B$-modules  along the same lines, but as we are going to prove the lifting theorem only for von Neumann $\B$-$\B$-modules,
      we confine ourselves to von Neumann modules).

     Let $(E=(E_t)_{t\in \T},\beta=(\beta_{s,t})_{s,t\in \T})$ be an inclusion system of von Neumann $\B$-$\B$-modules. Fix $t\in \T.$ Let $E_\ft:=E_{t_n}\odot \cdots \odot E_{t_1}$ for $\ft=(t_n,\dots, t_1)\in\J_t.$ For all $\ft=(t_n,\dots, t_1)\in\J_t$  we define $\beta_{\ft (t)}:E_t\to E_\ft$ by
     $$\beta_{\ft (t)}=(\beta_{t_n,t_{n-1}}\odot \id)(\beta_{t_n+t_{n-1},t_{n-2}}\odot \id)\dots (\beta_{t_n+\dots +t_3,t_2}\odot \id)\beta_{t_n+\dots +t_2, t_1},$$
     and  for $\ft=(t_n,\dots, t_1)=\fs_m\smile\dots \smile\fs_1\ge\fs=(s_m,\dots,s_1)$ with $|\fs_j|=s_j,$ we  define $\beta_{\ft \fs}:E_\fs\to E_\ft$ by $$\beta_{\ft \fs}=\beta_{\fs_m (s_m)}\odot\dots \odot\beta_{\fs_1 (s_1)}.$$ Then it is clear from the definitions that $ \beta_{\ft\fs},\ft\ge\fs$ are bilinear isometries and $\beta_{\ft \fs}\beta_{\fs\fr}=\beta_{\ft\fr}$ for $\ft\ge\fs\ge \fr. $ That is, the family $(E_\ft)_{\ft\in\J_t}$ with $(\beta_{\ft\fs})_{\fs\le \ft}$  is an inductive system of von Neumann $\B$-$\B$-modules. Hence the inductive limit $\E_t=\underset{\ft\in\J_t}{\ilim}~E_\ft$ is also a von Neumann $\B$-$\B$-module and the canonical mappings $i_\ft: E_\ft\to \E_t$ are  bilinear  isometries (cf. \cite[Proposition 4.3]{BS}).

     For $\fs\in \J_s,\ft\in \J_t$ it is clear that $E_\fs\odot E_\ft=E_{\fs\smile\ft}.$  Using this observation  we define  $B_{st}: \E_s\odot \E_t\to \E_{s+t}$ by
     $$B_{st}(i_\fs x_\fs \odot i_\ft y_\ft)=i_{\fs\smile \ft}(x_\fs\odot y_\ft)\text{ for } x_\fs\in E_\fs, y_\ft\in E_\ft,\fs\in \J_s, \ft \in \J_t.$$
     Then  $(\E=(\E_t)_{t\in \T},B=(B_{st})_{s,t\in\T})$ forms a product system (cf. Bhat and Skeide \cite[Theorem 4.8 and page 41]{BS}).

     \begin{definition}
        Given an inclusion system $(E,\beta),$ the product system $(\E,B)$ described above is called the product system generated by the inclusion system $(E,\beta).$
     \end{definition}

     We recall the following: Let $\B$ be a von Neumann algebra on a Hilbert space $\G.$ Let $E$ be a von Neumann $\B$-module. Then $\H=E\odot \G$ is a Hilbert space such that $E\se \SB(\G,\H)$  via  $E\ni x\mapsto L_x\in \SB(\G,\H),$ where $L_x:\G\to \H$ is defined by $L_x(g)=x\odot g$ for $g\in\G.$ Note that $E$ is strongly closed in $\SB(\G,\H).$ Sometimes we write  $xg$ instead of $x\odot g$ with the above identification in mind.

      \begin{remark}\label{rem-tech}
        Let $(\E, B)$ be the product system generated by the inclusion system $(E, \beta )$ on a von Neumann algebra
         $\B\subseteq\BG.$ Let $i_\ft:E_\ft\to \E_t, \ft\in \mathbb{J}_t$ be the canonical bilinear isometries.  Then $i_\ft i_\ft^*$
         increases to identity in strong operator topology, that is,
         for all  $x\in \E_t$ and $ g\in \G,$ we have
          \begin{equation}
             \lim\limits_{\ft\in \mathbb{J}_t}\norm{xg-i_\ft i_\ft^*(x)g}=0.\label{eq-rem-tech}
          \end{equation}
     \end{remark}

    % \begin{proof}
     %   Note that we have  $E_\ft\se \SB(\G,H_\ft)$ and $\E_t\se \SB(\G,\H_t) $ where $H_\ft=E_\ft\odot \G$ and $\H_t=\E_t\odot \G.$ Let $ v_\ft:=i_\ft\odot \id: H_\ft\to \mathcal{H}_t.$ Then $ v_\ft$'s are isometries. Note that for $\fs\le \ft\in \mathbb{J}_t,$ as $ i_\ft i_\ft^*i_\fs i_\fs^*=i_\fs i_\fs^*=i_\fs i_\fs^* i_\ft i_\ft^*$  we have
     %   \begin{equation} v_\ft v_\ft^*v_\fs v_\fs^*=v_\fs v_\fs^*=v_\fs v_\fs^* v_\ft v_\ft^*~~~ \text{ for }\fs\le \ft\in \mathbb{J}_t.\label{eq-lem-pf-1}
     %   \end{equation}
     %   Also note that as
     %   $\overline{\spn} ^s\{i_\ft(E_\ft):\ft\in \mathbb{J}_t\}=\E_t $  we have
     %   \begin{equation}
     %   \overline{\spn}~\{v_\ft(H_\ft):\ft\in \mathbb{J}_t\}=\mathcal{H}_t .\label{eq-lem-pf-2}
     %   \end{equation}
     %   Now \eqref{eq-lem-pf-1} and \eqref{eq-lem-pf-2} implies that $ \lim\limits_{\ft\in \mathbb{J}_t} v_\ft v_\ft^* h=h$ for all $h\in \mathcal{H}_t.$  Observe that  for $x\in \E_t, \ft\in \mathbb{J}_t$ and $g\in \G$ we have
     %   $$v_\ft v_\ft^*(xg)=(i_\ft i_\ft^*\odot \id)(x\odot g)=i_\ft i_\ft^* (x)\odot g=i_\ft i_\ft^*(x)g.$$
     %   Thus
     %   $$\lim\limits_{\ft\in \mathbb{J}_t}\norm{xg-i_\ft i_\ft^*(x)g}=\lim\limits_{\ft\in \mathbb{J}_t}\norm{xg-v_\ft v_\ft^*(xg)}=0.$$
     %\end{proof}
 Now we shall prove the lifting theorem almost  along  the same lines of the proof of \cite[Theorem 11]{BM}
 \begin{theorem}\label{thm-lifting}
        Let $\B$ be a von Neumann algebra on a Hilbert space $\G.$
        Let $(E, \beta)$ and $(F, \gamma )$ be two inclusion systems of von Neumann $\B$-$\B$-modules generating two product systems $(\E, B), (\mathcal{F}, C)$ respectively. Let $i,j$ be their respective inclusion maps. Suppose $T:(E, \beta)\to (F, \gamma )$ is a (weak) morphism then there exists a unique morphism $\hat{T}: (\E, B)\to (\mathcal{F}, C)$ such that $T_s=j_s^*\hat{T}_si_s$ for all $s\in \T.$
     \end{theorem}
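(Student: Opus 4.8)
## Proof proposal

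The plan is to build $\hat{T}_s$ on each inductive-limit module $\E_s$ by specifying it on the building blocks $E_\ft$, $\ft\in\J_t$, via tensor powers of $T$, then checking that these are compatible with the connecting maps $\beta_{\ft\fs}$ and $\gamma_{\ft\fs}$, so that they pass to the inductive limit. Concretely, for $\ft=(t_n,\dots,t_1)\in\J_t$ set $T_\ft:=T_{t_n}\odot\dots\odot T_{t_1}:E_\ft\to F_\ft$; this is an adjointable bilinear map with $\norm{T_\ft}\le e^{tk}$ (submultiplicativity of the norm under $\odot$ and $|\ft|=t$). First I would verify the key intertwining relation: for $\ft\ge\fs$ in $\J_t$,
\begin{equation*}
\gamma_{\ft\fs}^*\,T_\ft\,\beta_{\ft\fs}=T_\fs.
\end{equation*}
By the definition of $\beta_{\ft\fs},\gamma_{\ft\fs}$ as iterated tensor products of the $\beta_{\ft(t)},\gamma_{\ft(t)}$, this reduces (using $(S_1\odot S_2)^*=S_1^*\odot S_2^*$ for adjointable bilinear maps and functoriality of $\odot$) to the single-step relation, which is exactly the weak-morphism identity $T_{s+t}=\gamma_{s,t}^*(T_s\odot T_t)\beta_{s,t}$ together with the inclusion-system coherence; an induction on the length of $\ft$ finishes it. This identity says precisely that the family $(j_\ft^{F}\,T_\ft\,(j_\ft^{E})^*)$ — more precisely the maps $T_\ft$ viewed through the canonical isometries — forms a compatible family over the directed set $\J_t$, hence defines a bounded bilinear map $\hat T_t:\E_t\to\mathcal F_t$ characterized by $\hat T_t\, i_\ft = j_\ft\, T_\ft$ for all $\ft\in\J_t$. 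Adjointability of $\hat T_t$ is automatic since $\mathcal F_t$ is a von Neumann module and $\hat T_t$ is a bounded right-linear map between von Neumann modules (self-duality), and $\norm{\hat T_t}\le e^{tk}$ passes to the limit.

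Next I would check $T_s=j_s^*\hat T_s i_s$. Taking $\ft=(s)\in\J_s$ the trivial one-entry tuple, $E_{(s)}=E_s$, $i_{(s)}=i_s$, $j_{(s)}=j_s$, $T_{(s)}=T_s$, so $\hat T_s i_s = j_s T_s$, whence $j_s^*\hat T_s i_s = j_s^* j_s T_s = T_s$ since $j_s$ is an isometry. Then I would verify that $\hat T=(\hat T_t)$ is a morphism of product systems, i.e.
\begin{equation*}
C_{s,t}\,\hat T_{s+t}=(\hat T_s\odot\hat T_t)\,B_{s,t},
\end{equation*}
(note product-system morphisms can be stated in this strong form since $B,C$ are unitary). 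It suffices to test on vectors of the form $B_{s,t}^{-1}$ applied to $i_{\fs\smile\ft}(x_\fs\odot y_\ft)$, equivalently on $i_\fs x_\fs\odot i_\ft y_\ft$: by construction $B_{s,t}(i_\fs x_\fs\odot i_\ft y_\ft)=i_{\fs\smile\ft}(x_\fs\odot y_\ft)$, and
$\hat T_{s+t}\, i_{\fs\smile\ft}(x_\fs\odot y_\ft)=j_{\fs\smile\ft}\,T_{\fs\smile\ft}(x_\fs\odot y_\ft)=j_{\fs\smile\ft}\big((T_\fs x_\fs)\odot(T_\ft y_\ft)\big)$, which $C_{s,t}^{-1}$ carries to $j_\fs(T_\fs x_\fs)\odot j_\ft(T_\ft y_\ft)=(\hat T_s\odot\hat T_t)(i_\fs x_\fs\odot i_\ft y_\ft)$. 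Since such vectors span $\E_s\odot\E_t$ strongly (Remark \ref{rem-tech}, as $i_\fs i_\fs^*\to\id$ strongly and $\odot$ of von Neumann modules is the strong closure), the identity holds on all of $\E_s\odot\E_t$ by strong continuity of the maps involved.

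Finally, uniqueness: if $\hat S$ is another morphism with $j_s^*\hat S_s i_s = T_s$, then applying the product-system morphism property repeatedly, $C_{\ft}\hat S_t i_\ft = (\text{tensor of }j_{t_k}^*\hat S_{t_k}i_{t_k})$ type expression forces $\hat S_t i_\ft = j_\ft T_\ft = \hat T_t i_\ft$; more carefully, one shows by induction on the length of $\ft$ that the iterated intertwining relations for $\hat S$ pin down $\hat S_t$ on the range of every $i_\ft$, and since $\bigcup_\ft i_\ft(E_\ft)$ is strongly total in $\E_t$ with $\hat S_t,\hat T_t$ strongly continuous, $\hat S_t=\hat T_t$. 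The main obstacle I anticipate is the bookkeeping in the intertwining identity $\gamma_{\ft\fs}^*T_\ft\beta_{\ft\fs}=T_\fs$: one must handle adjoints of the iterated, partially-tensored isometries $\beta_{\ft(t)}$ carefully (they are built from compositions of maps of the form $\beta_{\cdot,\cdot}\odot\id$, and taking adjoints reverses the composition order), and the coherence axiom of the inclusion system is needed at each splitting step to reassociate; everything else is the routine "define on a cofinal family, check compatibility, pass to the limit, check it's strongly densely determined" pattern, exactly as in \cite[Theorem 11]{BM}.
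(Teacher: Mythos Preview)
Your argument has a genuine gap at the key step. You claim that the weak-morphism identity $\gamma_{\ft\fs}^*T_\ft\beta_{\ft\fs}=T_\fs$ makes $(T_\ft)$ into a compatible family over the inductive system $(E_\ft,\beta_{\ft\fs})$, yielding a map $\hat T_t$ with $\hat T_t\, i_\ft=j_\ft T_\ft$. But that characterization would require the \emph{strong} compatibility $T_\ft\beta_{\ft\fs}=\gamma_{\ft\fs}T_\fs$ (equivalently $j_\ft T_\ft\beta_{\ft\fs}=j_\fs T_\fs$), which is exactly what a \emph{weak} morphism does not give. With only $\gamma_{\ft\fs}^*T_\ft\beta_{\ft\fs}=T_\fs$, the vectors $j_\ft T_\ft\beta_{\ft\fs}x_\fs$ and $j_\fs T_\fs x_\fs$ need not coincide: the former may have a nonzero component orthogonal to $j_\fs(F_\fs)$. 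So the na\"ive inductive-limit assignment $i_\ft x_\ft\mapsto j_\ft T_\ft x_\ft$ is not well defined, and your subsequent steps (the morphism check and uniqueness) all rely on the unproved relation $\hat T_t\, i_\ft=j_\ft T_\ft$.

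The paper circumvents this by an analytic argument rather than a universal-property one: set $\Phi_\fs:=j_\fs T_\fs i_\fs^*$ and show that for each $x\in\E_s$, $g\in\G$ the net $(\Phi_\fs(x)g)_{\fs\in\J_s}$ is Cauchy in $\F_s\odot\G$. The weak compatibility gives $P_\fr\Phi_\fs Q_\fr=\Phi_\fr$ (with $P_\fr=j_\fr j_\fr^*$, $Q_\fr=i_\fr i_\fr^*$), and from this a Pythagorean identity shows $\norm{\Phi_\fs Q_{\fr_0}(x)g}$ is increasing in $\fs$ and bounded, hence Cauchy; combined with $Q_{\fr_0}\to\id$ strongly this produces $\hat T_s(x)g:=\lim_\fs\Phi_\fs(x)g$. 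What one then obtains is only the weaker relation $j_\fs^*\hat T_s i_\fs=T_\fs$ (not $\hat T_s i_\fs=j_\fs T_\fs$), and the morphism identity is verified by pairing both sides against vectors $j_{\fs\smile\ft}(y_\fs\odot y_\ft)$. Your outline would go through verbatim if $T$ were assumed to be a strong morphism; for the stated weak-morphism hypothesis, this convergence argument (or an equivalent substitute) is the missing ingredient.
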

     \begin{proof}
        Given that $T:(E, \beta)\to (F, \gamma )$ is a morphism. Let $k$ be such that $\norm{T_s}\le e^{ks}$ for all $s\in \T.$ For $\fs=(s_n,...,s_1)\in \mathbb{J}_s,$ define $T_\fs :E_\fs\to F_\fs$ by $T_\fs=T_{s_n}\odot \cdots \odot T_{s_1}.$ Let $i_\fs: E_\fs\to \E_s$ and  $j_\fs: F_\fs\to \mathcal{F}_s$ be the canonical bilinear isometries. Then for $\fs \le \ft$ in $\J_s$ we have
        \begin{equation}
        \gamma_{\ft\fs}^*T_\ft\beta_{ \ft\fs}=T_\fs.\label{eq-pf-thm-1}
        \end{equation}

        Consider for $\fs\in \mathbb{J}_s, \Phi_ \fs:=j_\fs T_\fs i_\fs^*.$ Set $P_\fs=j_\fs j_\fs^*$ and $Q_\fs=i_\fs i_\fs^*.$ Then by Remark \ref{rem-tech} $(P_\fs)_{\fs\in \J_s}$ and $(Q_\fs)_{\fs\in \J_s}$ are families of increasing projections. Now for $\fr\le \fs,$ $i_\fr=i_\fs\beta_{\fs\fr},$  $j_\fr=j_\fs\gamma_{\fs\fr} $ implies that $\beta_{\fs\fr}=i_\fs^*i_\fr,\gamma_{\fs\fr}=j_\fs^*j_\fr,$ hence it follows from \eqref{eq-pf-thm-1} that $P_\fr\Phi_\fs Q_\fr=\Phi_\fr.$

        For all $s\in \T,$ $\E_s\se \SB(\G,\E_s\odot \G)$ and $\F_s\se \SB(\G,\F_s\odot \G).$ Fix $s\in \T.$ Let $x\in \E_s, g\in \G$ and let $\epsilon >0.$ Using \eqref{eq-rem-tech} choose $\fr_0\in \mathbb{J}_s$  such that \begin{equation}
        e^{ks}\norm{Q_{\fr_0}(x)g-xg}<\frac{\epsilon}{3}.\label{eq-pf-0}
        \end{equation}
        Then, for any $\fs\in \mathbb{J}_s,$  we have
        \begin{align}
        \norm{\Phi_\fs(x)g-\Phi_\fs Q_{\fr_0}(x)g}& =\norm{\Phi_\fs(x)\odot g-\Phi_\fs Q_{\fr_0}(x)\odot g}
        \nonumber\\
        &=\norm{(\Phi_\fs\odot \id _\G)(x\odot g-Q_{\fr_0}(x)\odot g)}\nonumber\\
        &\le\norm{\Phi_\fs\odot \id _\G} \norm{xg-Q_{\fr_0}(x) g}\nonumber\\
        &\le e^{ks}~ \norm{xg-Q_{\fr_0}(x) g}<\frac{\epsilon}{3}. \quad\text{ (by \eqref{eq-pf-0})}\label{eq-pf-1}
        \end{align}

        Let $\ft\ge\fs\ge \fr_0\in \mathbb{J}_s.$ As $(P_\fs)_{\fs\in \J_s}$ and $(Q_\fs)_{\fs\in \J_s}$ are increasing families of projections, we have
        \begin{align}
        \norm{\Phi_\ft Q_{\fr_0}(x)g}^2 \nonumber&=\norm{P_\ft \Phi_\ft Q_{\fr_0}(x) g}^2 \nonumber\\
        &= \norm{P_\fs\Phi_\ft Q_{\fr_0}(x) g+(P_\ft-P_\fs) \Phi_\ft Q_{\fr_0}(x) g}^2 \nonumber\\
        &= \norm{P_\fs\Phi_\ft Q_{\fr_0}(x)g}^2+\norm{(P_\ft-P_\fs)\Phi_\ft Q_{\fr_0}(x)g}^2 \nonumber\\
        &= \norm{P_\fs\Phi_\ft Q_\fs Q_{\fr_0}(x)g}^2+\norm{\Phi_\ft Q_{\fr_0}(x)g-P_\fs\Phi_\ft Q_\fs Q_{\fr_0}(x)g}^2 \nonumber\\
        &= \norm{\Phi_\fs Q_{\fr_0}(x)g}^2+\norm{\Phi_\ft Q_{\fr_0}(x)g-\Phi_\fs Q_{\fr_0}(x)g}^2.\label{eq-pf-2}
        \end{align}
        Hence for  $\ft\ge\fs\ge \fr_0\in \mathbb{J}_s, $ we have $\norm{\Phi_\ft Q_{\fr_0}(x)g}^2 \ge \norm{\Phi_\fs Q_{\fr_0}(x)g}^2.$ Also
        $$\norm{\Phi_\fs Q_{\fr_0}(x)g}^2\le \norm{\Phi_\fs Q_{\fr_0}}^2\norm{x}^2 \norm{g}^2\le e^{2ks}\norm{x}^2 \norm{g}^2$$
        for all $\fs\in \mathbb{J}_s.$  Thus  $(\norm{\Phi_\fs Q_{\fr_0}(x)g}^2 )_{\fs\in \mathbb{J}_s}$
        is a Cauchy net, hence choose $\fr_1\in \mathbb{J}_s, \fr_1\ge \fr_0$  such that
        \begin{equation}
        |\norm{\Phi_\ft Q_{\fr_0}(x)g}^2 - \norm{\Phi_\fs Q_{\fr_0}(x)g}^2|<\left(\frac{\epsilon}{3}\right)^2\quad\text{for } \ft\ge\fs\ge \fr_1\ge \fr_0\in \mathbb{J}_s .\label{eq-pf-3}
        \end{equation}

        Therefore for $ \ft\ge\fs\ge \fr_1$ in $\J_s,$  from \eqref{eq-pf-2} and \eqref{eq-pf-3} we have
        \begin{equation}
        \norm{\Phi_\ft Q_{\fr_0}(x)g-\Phi_\fs Q_{\fr_0}(x)g}=|\norm{\Phi_\ft Q_{\fr_0}(x)g}^2 - \norm{\Phi_\fs Q_{\fr_0}(x)g}^2|^\frac{1}{2}<\frac{\epsilon}{3}.\label{eq-pf-4}
        \end{equation}

        Now  for $ \ft\ge\fs\ge \fr_1$ in $\J_s ,$ from \eqref{eq-pf-1} and \eqref{eq-pf-4}  we have
        \begin{align*}
        &\norm{(\Phi_\ft-\Phi_\fs)(x)g}\\&\le\norm{\Phi_\ft(x)g-\Phi_\ft Q_{\fr_0}(x)g}+\norm{\Phi_\ft Q_{\fr_0}(x)g-\Phi_\fs Q_{\fr_0}(x)g}+\norm{ \Phi_\fs Q_{\fr_0}(x)g-\Phi_\fs(x)g}< \epsilon.
        \end{align*}
        Thus  $\lim\limits_{\fs\in \J_s}\Phi_\fs(x)g$ exists. Define $\hat{T}_s(x)g:=\lim\limits_{\fs \in \J_s}\Phi_\fs(x)g$
         for $s>0.$ This defines a bounded bilinear map $\hat{T}_s:\E_s\to \F_s$ for all
         $s\in \T$.

        Now for $\fs\in\J_s$ and for all $ x_\fs\in E_\fs, g\in \G ,$ we have
        $$
        j_\fs^*\hat{T}_si_\fs(x_\fs)g=\lim\limits_{\fr\in\J_s}j_\fs^*\Phi_\fr i_\fs(x_\fs)g=\lim\limits_{\fr\in\J_s}j_\fs^*j_\fr T_\fr i_\fr^*i_\fs(x_\fs)g=\lim\limits_{\fr\in\J_s}\gamma_{\fr\fs}^*T_\fr\beta_{\fr\fs}(x_\fs)g=T_\fs(x_\fs)g.
        $$
        Thus $T_\fs=j_\fs^*\hat{T}_si_\fs$ for all $\fs\in \mathbb{J}_s$ and  $s\in\T.$ In particular $T_s=j_s^*\hat{T}_si_s$ for all $s\in\T.$

        Now we shall prove that $(\hat{T}_t)_{t\in\T}$ is a morphism of product systems.
        For  $\ft\in\mathbb{J}_t,\fs\in \mathbb{J}_s$ and  $x_{\ft}\in E_\ft, x_\fs\in E_\fs , y_{\ft}\in F_\ft, y_\fs\in F_\fs$ consider,
        \begin{align*}
        \la C_{s,t}^*(\hat{T}_s\odot \hat{T}_t)B_{s,t} i_{\fs\smile \ft}(x_\fs\odot x_\ft), j_{\fs\smile \ft}(y_\fs\odot y_\ft)\ra
        &=\la   (\hat{T}_s\odot \hat{T}_t) (i_\fs\odot i_\ft)(x_\fs\odot x_\ft), j_\fs\odot j_\ft(y_\fs\odot y_\ft)\ra\\
        &=\la   \hat{T}_si_\fs x_\fs\odot \hat{T}_ti_\ft x_\ft, j_\fs y_\fs\odot y_\ft)\ra\\
        %&=\la  \hat{T}_ti_\ft x_\ft, \la \hat{T}_si_\fs x_\fs, j_\fsy_\fs\ra j_\ft y_\ft\ra\\
        %&=\la  \hat{T}_ti_\ft x_\ft, \la j_\fs^*\hat{T}_si_\fs x_\fs, y_\fs\ra j_\ft y_\ft\ra\\
        &=\la   j_\ft^*\hat{T}_ti_\ft x_\ft, \la j_\fs^*\hat{T}_si_\fs x_\fs, y_\fs\ra y_\ft\ra\\
        %&=\la  T_\ft x_\ft, \la T_\fs x_\fs, y_\fs\ra y_\ft\ra\\
        &=\la    T_\fs x_\fs\odot T_\ft x_\ft, y_\fs\odot y_\ft\ra\\
        &=\la    (T_\fs\odot T_\ft)(x_\fs\odot x_\ft), y_\fs\odot y_\ft\ra\\
        &=\la    T_{\fs\smile \ft}(x_\fs\odot x_\ft), y_\fs\odot y_\ft\ra\\
        %&=\la   j_{\fs\smile \ft}^*\hat{T}_{s+t}i_{\fs\smile \ft}(x_\fs\odot x_\ft), (y_\fs\odot y_\ft)\ra\\
        &=\la    \hat{T}_{s+t}i_{\fs\smile \ft}(x_\fs\odot x_\ft), j_{\fs\smile \ft}(y_\fs\odot y_\ft)\ra
        \end{align*}
        Thus $ \hat{T}_{s+t}=C_{s,t}^*(\hat{T}_s\odot \hat{T}_t)B_{s,t}$ for all $s,t\in \T.$
     \end{proof}

    \noindent\textbf{Acknowledgments.} The first author thanks J C
    Bose Fellowship and the second author thanks
    NBHM, and the Indian Statistical Institute for research funding.

\end{document}